\numberwithin{equation}{section}
\newcommand{\cO}{\mathcal{O}}
\newcommand{\Pp}{\mathbb{P}}
\newcommand{\Qq}{\mathbb{Q}}
\newcommand{\Rr}{\mathbb{R}}
\newcommand{\Zz}{\mathbb{Z}}
\newcommand{\Nn}{\mathbb{N}}
\newcommand{\Exc}{\operatorname{Exc}}
\newcommand{\spec}{\operatorname{Spec}}
\newcommand{\Supp}{\operatorname{Supp}}
\newcommand{\mult}{\operatorname{mult}}
\newcommand{\lf}{\lfloor}
\newcommand{\rf}{\rfloor}
\newcommand{\Oo}{\mathcal{O}}
\newtheorem{thm}{Theorem}[section]
\newtheorem{cor}[thm]{Corollary}
\newtheorem{lem}[thm]{Lemma}
\newtheorem{prop}[thm]{Proposition}
\newtheorem{claim}[thm]{Claim}
\theoremstyle{definition}
\newtheorem{rem}[thm]{Remark}
\newtheorem{set}[thm]{Setting}
\theoremstyle{definition}
\newtheorem{defn}[thm]{Definition}
\begin{document}

\author{Lingyao Xie and Qingyuan Xue}

\address{Department of Mathematics, The University of Utah, Salt Lake City, UT 84112, USA}
\email{lingyao@math.utah.edu}

\address{Department of Mathematics, The University of Utah, Salt Lake City, UT 84112, USA}
\email{xue@math.utah.edu}

\date{\today}

\title[Existence of flips for threefolds in mixed characteristic $(0,5)$]{On the existence of flips for threefolds in mixed characteristic $(0,5)$}
\maketitle
\begin{abstract}
We provide a detailed proof of the validity of the Minimal Model Program for threefolds over excellent Dedekind separated schemes whose residue fields do not have characteristic 2 or 3.
\end{abstract}
\tableofcontents

\section{Introduction}

One of the fundamental goals of algebraic geometry is to classify all algebraic varieties (up to birational equivalence), which, conjecturally, can be achieved by means of the Minimal Model Program (MMP). In characteristic zero, the program holds for varieties with dimension $\le 3$, and a major part of MMP is known for varieties of general type in higher dimensions by \cite{BCHM10}, where they also established the existence of klt flips (see \cite{Bir12,HX13,HL21} for results in a more general setting). In positive characteristic, this theory is now known to hold for threefolds over perfect fields of characteristic $p>3$ (see \cite{HX15,CTX15,Bir16,BW17,GNT19,HW19b}) and in some special cases for fourfolds (\cite{HW20,XX21}). In mixed characteristic, the MMP is known to hold for excellent surfaces (\cite{Tan18}) and semi-stable schemes over excellent Dedekind schemes of relative dimension 2 whose residual characteristics $p\neq 2,3$ (\cite{Kaw94}). Recently substantial progress has been achieved for threefolds. It has been shown that the program is valid for threefolds whose residue fields do not have characteristic 2, 3 or 5 (\cite{BMP+20}). It has also been shown that the MMP holds for strictly semi-stable schemes over excellent Dedekind schemes of relative dimension 2 and for birational morphisms $f$ with $\Exc(f)\subseteq\lf\Delta\rf$ (\cite{TY20}).\par
The goal of this article is to extend the Minimal Model Program for threefolds in mixed characteristic whose residue fields could have characteristic $5$. This is expected to hold as an immediate generalization of \cite{HW19b} (cf. \cite[Remark 9.3]{BMP+20}), but no proof has been written down in detail. Thus we think it may be worthwhile to give the precise statement and its complete proof for future references.

We essentially follow the same strategy of \cite{HW19b}, where they proved the existence of flips for threefolds over an algebraically closed field with characteristic $5$. We generalize their proof to mixed characteristic by using the new techniques developed by \cite{BMP+20} and \cite{TY20}.

\begin{set}\label{setting of V}
In this article, $V$ is an excellent Dedekind scheme whose residue fields do not have characteristic 2 or 3.
\end{set}

\begin{thm}\label{thm: existence of flip for klt pairs}
Let $(X,\Delta)$ be a three-dimensional $\Qq$-factorial klt pair over $V$. If $f:X\to Z$ is a flipping contraction over $V$ such that $\rho(X/Z)=1$, then the flip $f^+:X^+\to Z$ exists.
\end{thm}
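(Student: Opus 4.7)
Following Shokurov's strategy, I would first reduce the existence of klt flips to that of pl-flips via special termination; the special termination input for threefold pairs is available in our mixed characteristic setting via the machinery already deployed in \cite{BMP+20,TY20}. Thus one may assume $(X,S+B)$ is a $\Qq$-factorial plt pair with $S = \lfloor S+B\rfloor$ an irreducible prime divisor and with $-S$ ample on the flipping ray.

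\textbf{Restriction and surface MMP.} To produce the flip, it suffices to show that the relative log-canonical algebra
\[
R \;=\; \bigoplus_{m\geq 0} f_*\Oo_X\bigl(\lfloor m(K_X+S+B)\rfloor\bigr)
\]
is a finitely generated $\Oo_Z$-algebra. By plt adjunction, $(K_X+S+B)|_S = K_S + B_S$ with $B_S = \Diff_S(B)$, and the pair $(S,B_S)$ is klt on the surface $S$. Via Shokurov's restriction lemma, finite generation of $R$ reduces to finite generation of the restricted algebra $R_S \subset \bigoplus_m H^0(S,\lfloor m(K_S+B_S)\rfloor)$. Running the MMP on $(S,B_S)$ over $f(S)$, which is valid for excellent surfaces by \cite{Tan18}, produces a good minimal model on which the adjoint algebra is finitely generated, so it remains to identify $R_S$ with this adjoint algebra.

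\textbf{Extension theorem and the main obstacle.} The identification above rests on an extension theorem: every pluri-log-canonical section of $K_S + B_S$ on $S$ must lift to a section of $K_X+S+B$ in a neighborhood of $S$. This is the crux of the argument and the only place where residue characteristic $5$ poses a genuine difficulty, since Kawamata--Viehweg vanishing is unavailable. Hacon--Witaszek \cite{HW19b} handle the equal-characteristic $p=5$ case by an ingenious combination of global $F$-regularity of log del Pezzo pairs, Frobenius-trace techniques, and sharp coefficient estimates peculiar to $p=5$. The principal technical task in our proof is to transplant their extension argument to mixed characteristic: each appeal to Frobenius in \cite{HW19b} must be replaced by its $+$-stable or perfectoid counterpart from \cite{BMP+20}, supplemented by the pl-flip lifting tools of \cite{TY20}, while keeping the coefficient analysis intact. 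Verifying that these substitutions preserve the precise numerical thresholds on which the char-$5$ extension of \cite{HW19b} relies is the main anticipated obstacle; once this is done, the surface MMP on $(S,B_S)$ closes the argument and yields finite generation of $R$, hence the flip.
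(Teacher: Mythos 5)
Your opening reduction to pl-flips matches the paper, but the core of your proposal \,---\, reducing finite generation to the restricted algebra on $S$ and proving an extension theorem lifting \emph{all} pluri-log-canonical sections from $S$ to $X$ \,---\, breaks down exactly in the only case where the theorem is new, namely residual characteristic $5$. When $(S^n,B_{S^n}+\epsilon D)$ fails to be globally $+$-regular (which, by Remark \ref{rem: (2,3,5) case in char 5}, happens precisely when the boundary on the Koll\'ar component has coefficients $\frac{1}{2},\frac{2}{3},\frac{4}{5}$ at three points), no such extension theorem is known, and \cite{HW19b} does not prove one: in the globally F-regular/$+$-regular case the flip already follows from \cite[Corollary 7.9, Theorem 8.25]{BMP+20} with no further work, and in the bad case Hacon--Witaszek abandon the ``lift the whole algebra'' strategy entirely. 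So the step you describe as ``transplanting their extension argument'' has no source to transplant from; this is a genuine gap, not a routine substitution of Frobenius by $+$-regularity.

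What the paper (following \cite{HW19b}) actually does in the bad case is to lift only a single section: a $6$-complement is produced on $S^n$ and lifted to $X$ via the $\mathbf{B}^0$-machinery of \cite{BMP+20} (Proposition \ref{prop: m-complement for (X,S+B)}, Remark \ref{rem: m=6 when char p=5}); one then shows the induced surface complement has a unique exceptional non-klt place (Proposition \ref{prop: unique non-klt place for 6-complement}), proves existence of flips for flipping contractions admitting a qdlt $6$-complement (Proposition \ref{Prop: Existence of flip for qdlt pair}), extracts the unique non-klt place (Proposition \ref{prop: existence of divisorial extractions}, Corollary \ref{cor: qdlt modification with unique extraction E}), and runs an explicit MMP on that extraction, using Keel/Witaszek-type semiampleness in mixed characteristic to contract the exceptional divisor and obtain $X^+$ directly, rather than via finite generation of a restricted algebra. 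Your proposal also omits two reductions the paper needs to state the theorem in this generality: removing the standard-coefficient hypothesis (Proposition \ref{prop: existence of flips when the residue filed is infinite}, by induction on the number of non-standard coefficients), and passing from general $V$ to a local base with infinite residue field by localization of the log-canonical algebra and faithfully flat base change to the completed strict Henselization. To repair your argument you would have to replace the central extension step by the complement/qdlt-MMP strategy above.
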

Note that this result is known when the residue fields of $V$ do not have characteristic 2,3 or 5 by \cite{BMP+20}. As corollaries of Theorem \ref{thm: existence of flip for klt pairs}, we have the following results on the MMP in mixed characteristic.

\begin{thm}[Minimal Model Program with scaling]\label{thm: MMP with scaling}
Let $(X,\Delta)$ be a three-dimensional $\Qq$-factorial dlt pair over $V$ and let $f:X\to Z$ be a projective contraction over $V$ such that $\dim f(X)>0$. Then we can run a $(K_X+\Delta)$-MMP with scaling of an ample divisor over $Z$. If $K_X+\Delta$ is relatively pseudo-effective, then the MMP terminates with a log minimal model over $Z$. Otherwise, the MMP terminates with a Mori fibre space.
\end{thm}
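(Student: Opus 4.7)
The plan is to parallel the argument of \cite[\S 9]{BMP+20}, where the analogous MMP with scaling statement is established for threefolds over $V$ under the additional hypothesis that residue fields have characteristic $\neq 5$. In that paper, the restriction on characteristic $5$ enters only through the appeal to existence of flips; every other ingredient (cone theorem, contraction theorem, basepoint freeness, special termination, termination of flips with scaling) is available over our $V$ without any further restriction on residue characteristic beyond $\neq 2,3$. Hence Theorem \ref{thm: existence of flip for klt pairs}, which supplies flips in residue characteristic $5$, is the only missing input, and substituting it completes the proof.

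First I would reduce to the case where $(X,\Delta)$ is $\Qq$-factorial klt. Write $\Delta = \lfloor\Delta\rfloor + \{\Delta\}$, choose an ample $\Qq$-divisor $A/Z$, a general $H\sim_{\Qq,Z} A$, and replace $\Delta$ by $\Delta' := (1-\epsilon)\lfloor\Delta\rfloor + \{\Delta\} + \epsilon H$ for $\epsilon > 0$ small. Then $(X,\Delta')$ is klt, and a $(K_X+\Delta)$-MMP with scaling of $A$ can be made to agree, step by step, with a $(K_X+\Delta')$-MMP with scaling of a slightly modified ample divisor, provided one simultaneously tracks the components of $\lfloor\Delta\rfloor$ via special termination. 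In the klt setting one then iterates the standard three steps: compute $\lambda_i = \inf\{t\ge 0: K_{X_i}+\Delta_i+tA_i \text{ is nef}/Z\}$, invoke the cone and contraction theorems over $V$ (available from \cite{BMP+20,TY20}) to extract and contract a $(K_{X_i}+\Delta_i)$-negative extremal ray $R_i$ with $(K_{X_i}+\Delta_i+\lambda_i A_i)\cdot R_i = 0$, and, if $R_i$ is flipping, apply Theorem \ref{thm: existence of flip for klt pairs} to produce the flip.

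The main obstacle is termination. Following \cite{HW19b,BMP+20}, termination of the MMP with scaling for threefolds breaks into two parts: special termination, which uses the two-dimensional MMP over an excellent base from \cite{Tan18} to stabilize flips near the non-klt locus, and termination of klt flips with scaling, which rests on a Shokurov-type difficulty argument together with the finiteness of weak log canonical models. Both are carried out in \cite[\S 9]{BMP+20} and remain valid over our $V$ once Theorem \ref{thm: existence of flip for klt pairs} is inserted in place of the characteristic-restricted flip existence used there. Finally, the dichotomy is automatic: if $K_X+\Delta$ is pseudo-effective over $Z$ then the scaling parameters $\lambda_i$ decrease to $0$ and the output is a log minimal model; otherwise they stay bounded away from $0$, forcing the final contraction to be of fibering type, so the process terminates with a Mori fibre space.
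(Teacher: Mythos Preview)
Your proposal is correct and follows essentially the same approach as the paper: both observe that the machinery of \cite[\S 9]{BMP+20} (cone theorem, contractions, special termination, termination with scaling) goes through verbatim over $V$ once Theorem~\ref{thm: existence of flip for klt pairs} is substituted for the existence-of-flips input, and the paper's proof is in fact nothing more than a direct citation of \cite[Theorems~9.34 and 9.36]{BMP+20}. Your write-up simply unpacks the mechanics of that citation in more detail than the paper does.
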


\begin{thm}[Base point free theorem]\label{thm: Base point free theorem}
Let $(X,\Delta)$ be a three-dimensional $\Qq$-factorial klt pair over $V$ and let $f:X\to Z$ be a projective contraction over $V$ such that $\dim f(X)>0$. Let $D$ be a relatively nef $\Qq$-Cartier $\Qq$-divisor such that $D-(K_X+\Delta)$ is nef and big over $Z$. Then $D$ is semi-ample over $Z$.
\end{thm}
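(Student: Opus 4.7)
The plan is to adapt the proof of the analogous base point free theorem from \cite{BMP+20}, which was established there under the stronger hypothesis that the residue characteristics are $\neq 2,3,5$. The only place where the restriction $p \neq 5$ entered in \cite{BMP+20} is the existence of flips, now supplied in the present setting by Theorem~\ref{thm: existence of flip for klt pairs}; so the argument should extend once this new input is plugged in.

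The first step is a standard reduction to the case where $D-(K_X+\Delta)$ is ample over $Z$. By Kodaira's lemma applied over $Z$, we may write
\[
D - (K_X+\Delta) \;\sim_{\Qq,Z}\; A + E
\]
with $A$ ample over $Z$ and $E \geq 0$. Choosing $0<\varepsilon\ll 1$ so that $(X,\Delta+\varepsilon E)$ remains klt, we obtain
\[
D - (K_X+\Delta+\varepsilon E) \;\sim_{\Qq,Z}\; (1-\varepsilon)\bigl(D-(K_X+\Delta)\bigr) + \varepsilon A,
\]
which is ample over $Z$. Thus we may assume from the outset that $D-(K_X+\Delta)$ is ample over $Z$.

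The second step is the Kawamata--Shokurov tiebreaking strategy combined with the mixed-characteristic non-vanishing theorem of \cite{BMP+20} (whose proof already uses existence of flips and MMP, now supplied in the missing case by Theorems~\ref{thm: existence of flip for klt pairs} and~\ref{thm: MMP with scaling}). Non-vanishing produces an effective divisor $M\in |m_0 D/Z|$ for some $m_0>0$; one then perturbs $\Delta$ by a suitable small multiple of $M$ to create a non-klt center, runs an auxiliary MMP (whose termination is guaranteed by Theorem~\ref{thm: MMP with scaling}) to extract the corresponding divisor $S$, and applies adjunction on $S$ together with Tanaka's surface base point free theorem \cite{Tan18}. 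One then lifts sections from $S$ back to $X$ using the Kawamata--Viehweg-type vanishing results of \cite{BMP+20} and \cite{TY20}, which remain valid in residual characteristic $5$.

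The main obstacle I expect is not conceptual but a matter of careful bookkeeping: one must verify that every auxiliary statement invoked in the \cite{BMP+20} proof (the non-vanishing theorem, the vanishing and lifting results, termination of the relevant auxiliary MMPs, and the usual perturbation arguments) continues to hold once we only exclude residue characteristic $2$ and $3$, and that every appeal to the existence of flips in \cite{BMP+20} can be replaced by Theorem~\ref{thm: existence of flip for klt pairs}. Once this replacement is carried out uniformly, the semi-ampleness of $D$ follows as in \cite{BMP+20} without further substantive modification.
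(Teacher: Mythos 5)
Your proposal is correct and follows essentially the same route as the paper, which simply observes that the base point free theorem of \cite[Theorem 9.26]{BMP+20} was proved contingent only on the existence of flips with standard coefficients, so it follows immediately once Theorem \ref{thm: existence of flip for klt pairs} supplies that input when residue characteristic $5$ is allowed. One minor caveat: your sketch of the internal argument (tiebreaking plus Kawamata--Viehweg-type vanishing to lift sections) reflects the characteristic-zero template rather than the actual mixed-characteristic proof in \cite{BMP+20}, which relies on $+$-regularity and Keel/Witaszek-type semiampleness criteria since vanishing theorems fail there; but as the deduction proceeds by citing that proof with only the flip input replaced, this does not affect the correctness of your conclusion.
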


\begin{thm}[Cone theorem]\label{thm: Cone theorem}
Let $(X,\Delta)$ be a three-dimensional $\Qq$-factorial dlt pair over $V$ and let $f:X\to Z$ be a projective surjective contraction over $V$ such that $\dim f(X)>0$. Then there exists a countable number of rational curves $\Gamma_i$ such that 
\begin{enumerate}
    \item $\overline{\mathrm{NE}}(X/Z)=\overline{\mathrm{NE}}(X/Z)_{K_X+\Delta\ge0}+\sum_i\Rr[\Gamma_i]$,
    \item the rays $\Rr[\Gamma_i]$ do not accumulate inside $\overline{\mathrm{NE}}(X/Z)_{K_X+\Delta<0}$, and
    \item for each $\Gamma_i$,
    $$
    -4d_{\Gamma_i}<(K_X+\Delta)\cdot\Gamma_i<0
    $$
    where $d_{\Gamma_i}$ is such that for any Cartier divisor $L$ on $X$, we have $L\cdot\Gamma_i$ divisible by $d_{\Gamma_i}$.
\end{enumerate}
\end{thm}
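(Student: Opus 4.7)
The plan is to derive the Cone Theorem from the Base Point Free Theorem (Theorem \ref{thm: Base point free theorem}), the MMP with scaling (Theorem \ref{thm: MMP with scaling}), and bend-and-break in mixed characteristic, following the classical Kawamata--Shokurov strategy as exposed in \cite[Chapter 3]{KM98} and its mixed-characteristic adaptations in \cite{BMP+20, TY20}.

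First, I would reduce from dlt to klt by perturbing the boundary. Writing $\Delta = \lfloor \Delta\rfloor + \{\Delta\}$, replace $\Delta$ by $\Delta' = (1-\epsilon)\lfloor \Delta\rfloor + \{\Delta\}$ for a small rational $\epsilon > 0$; then $(X,\Delta')$ is klt, and any $(K_X+\Delta)$-negative extremal ray $R$ is also $(K_X+\Delta')$-negative once $\epsilon$ is chosen small relative to $R$. The Cone Theorem for $(X,\Delta')$ then transfers to $(X,\Delta)$ by a standard limiting argument, using that only finitely many rays can appear in a compact slice of the negative cone.

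For parts (1) and (2), given a $(K_X+\Delta')$-negative extremal face $F$ of $\overline{\mathrm{NE}}(X/Z)$, I would use a standard supporting-function construction to produce a nef $\Qq$-Cartier divisor $D_F$ supporting $F$ such that $D_F - (K_X+\Delta')$ is nef and big over $Z$. Applying Theorem \ref{thm: Base point free theorem} then shows $D_F$ is semi-ample over $Z$, producing a contraction of $F$ and exhibiting a curve generating every ray. Non-accumulation in the $(K_X+\Delta)$-negative region is then formal: for any ample $A$ and small $\epsilon > 0$, rational polyhedrality of $\overline{\mathrm{NE}}(X/Z)_{K_X+\Delta'+\epsilon A \geq 0}$ is forced by the supporting-function argument above, so any accumulating family of rays in a compact slice of the negative cone would contradict finite generation.

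The main obstacle is the length bound in part (3). For each $(K_X+\Delta)$-negative extremal ray $R_i$, I would apply bend-and-break to produce a rational curve $\Gamma_i$ generating $R_i$ with $-(K_X+\Delta)\cdot \Gamma_i \leq (\dim X + 1)\,d_{\Gamma_i} = 4\,d_{\Gamma_i}$. In mixed characteristic this relies on the bend-and-break machinery developed in \cite[Section 6]{BMP+20}, which generalises Mori's technique and the positive-characteristic arguments of \cite{HW19b} by working on a special fibre of $V$ and then deforming the rational curve back to $X$. Residue characteristic $5$ is precisely the case in which Frobenius-based deformation arguments become delicate, and it is here that the techniques of \cite{HW19b} and the earlier results of this article are essential; I expect this to be the most technical step of the proof.
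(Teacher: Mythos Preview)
The paper's proof is a one-line citation: \cite[Theorem 9.27]{BMP+20} already establishes the Cone Theorem in this setting \emph{contingent on the existence of flips with standard coefficients}; the contribution of this paper is Theorem~\ref{thm: existence of flip for klt pairs}, which supplies exactly that input, so the Cone Theorem follows immediately. Your proposal instead attempts to reprove the theorem from scratch via the characteristic-zero Kawamata--Shokurov route, and this introduces two problems.

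First, your ``standard supporting-function construction'' of a nef $\Qq$-Cartier $D_F$ supporting a given extremal face $F$ is not available at this point. In characteristic zero that step rests on the Rationality Theorem, which is proved \emph{before} the Cone Theorem via Kawamata--Viehweg vanishing; in positive and mixed characteristic there is no such vanishing-based rationality statement, and one does not know a priori that a $(K_X+\Delta')$-negative extremal ray is rational or that a rational supporting hyperplane exists. The route taken in \cite{BMP+20} (and in \cite{Bir16, BW17} in positive characteristic) is the reverse: one first runs the MMP with scaling, using the existence of flips, and the polyhedrality of the cone is a \emph{consequence} of termination and the length bound, not an input to constructing contractions. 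So your argument for parts (1) and (2) is circular as written.

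Second, your diagnosis of the obstacle in part (3) is off. Mori's bend-and-break produces rational curves with bounded degree by reduction to positive characteristic and Frobenius, and this works uniformly for any residue characteristic $p>0$; there is nothing special about $p=5$ here. The delicacy at $p=5$ in this paper is entirely about the existence of flips (global $+$-regularity can fail for the $(\tfrac12,\tfrac23,\tfrac45)$ configuration on $\Pp^1$), and that is already handled by Theorem~\ref{thm: existence of flip for klt pairs}. Once flips exist, the length bound follows from the bend-and-break arguments in \cite[Section 9]{BMP+20} without any further characteristic-specific work.
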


The above results were proven in \cite[Section 9]{BMP+20} contingent upon the existence of flips with standard coefficients. Hence they follow immediately from Theorem \ref{thm: existence of flip for klt pairs}.

Note that the above results do not require $V$ to be mixed-characteristic. If in addition $V$ is of mixed characteristic, then we actually know the termination of flips.

\begin{thm}[Termination of flips]\label{thm: Termination of flips}
Let $(X,\Delta)$ be a three-dimensional $\Qq$-factorial dlt pair over $V$ and let $f:X\to Z$ be a projective contraction over $V$. Assume that $X_\mathbb{Q} \neq \emptyset$. Then any sequence of $(K_X+\Delta)$-MMP terminates.
\end{thm}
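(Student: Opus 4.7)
The plan is to combine termination of the three-dimensional MMP in characteristic zero with termination of the surface MMP in positive characteristic, using the flip existence of Theorem \ref{thm: existence of flip for klt pairs} to supply all intermediate flips. The hypothesis $X_\Qq\neq\emptyset$ forces $V$ to be of mixed characteristic, so $X$ decomposes into the characteristic-zero generic fiber $X_\Qq\to Z_\Qq$ and the positive-characteristic fibers $X_v$ over closed points $v\in V$, each of which has dimension at most two.

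First, given any sequence of $(K_X+\Delta)$-MMP steps
\[
(X,\Delta)=(X_0,\Delta_0)\dashrightarrow (X_1,\Delta_1)\dashrightarrow (X_2,\Delta_2)\dashrightarrow\cdots,
\]
I would restrict to the generic fiber. The pair $(X_\Qq,\Delta_\Qq)$ is a three-dimensional $\Qq$-factorial dlt pair in characteristic zero, and the restricted sequence is a $(K_{X_\Qq}+\Delta_\Qq)$-MMP over $Z_\Qq$, which terminates by the classical results in characteristic zero (Shokurov, Kawamata--Matsuki, \cite{BCHM10}). Hence after finitely many steps every further flipping contraction is an isomorphism over $Z_\Qq$, so all remaining flipping loci are curves contained in the positive-characteristic fibers $X_v$.

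After this reduction the problem becomes one of controlling flips whose flipping loci are curves in the two-dimensional fibers $X_v$. Running a Shokurov-style difficulty-function argument, one monitors the log discrepancies along the dlt strata of $\lfloor\Delta\rfloor$ together with the vertical surface components of the $X_v$ meeting the flipping loci; on the normalizations of these surfaces the induced sequences are $(K_S+\Delta_S)$-MMPs on $\Qq$-factorial dlt surfaces, which terminate by \cite{Tan18}. The combinatorial bookkeeping translating termination of these surface MMPs into termination of the original threefold sequence is carried out in detail in \cite[Section 9]{BMP+20} (and similarly in \cite{TY20}); crucially, nowhere in that argument is any hypothesis on the residue characteristic used beyond the existence of the intermediate flips and divisorial contractions.

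The main obstacle is therefore simply supplying flip existence when residue characteristic $5$ is allowed, which is precisely the content of Theorem \ref{thm: existence of flip for klt pairs}. Substituting our new flip existence result into the termination argument of \cite[Section 9]{BMP+20} yields Theorem \ref{thm: Termination of flips} without further modification; no new geometric input is required beyond the flip existence proven in the remainder of this article.
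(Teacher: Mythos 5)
Your first reduction is fine in outline, but note that since $\dim X=3$ and $X$ dominates the one-dimensional base, the generic fiber $X_\Qq$ is a \emph{surface} over a characteristic-zero field, not a threefold (so surface termination already suffices there), and $X_\Qq\neq\emptyset$ does not force $V$ to be of mixed characteristic --- it only excludes the equal characteristic $p>0$ case. The genuine gap comes after this reduction. The surfaces you propose to restrict to are components of closed fibers, not components of $\lfloor\Delta\rfloor$, so the special-termination/adjunction mechanism does not apply to them: there is no induced dlt surface pair to which the steps of the threefold MMP restrict as steps of a surface MMP (the restriction of a flip to a vertical surface meeting the flipping locus need not even be a birational contraction), and monotonicity of discrepancies on such a fiber component is not available because the fiber carries no coefficient in $\Delta$. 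Hence the appeal to \cite{Tan18} establishes nothing about the residual vertical flips, and the sentence deferring ``the combinatorial bookkeeping'' to \cite[Section 9]{BMP+20} is precisely where the content of the theorem is hiding; at that point you are citing the result rather than proving it, and citing it inaccurately, since the termination statement of \cite[Proposition 9.18]{BMP+20} assumes $K_X+\Delta$ pseudo-effective, a hypothesis your sketch neither uses nor can avoid by this route.

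For comparison, the paper's proof is exactly a case split that your proposal is missing: when $K_X+\Delta$ is pseudo-effective, Theorem \ref{thm: Termination of flips} follows by running the proof of \cite[Proposition 9.18]{BMP+20} verbatim, now that Theorem \ref{thm: existence of flip for klt pairs} supplies the intermediate flips in residue characteristic $5$; when $K_X+\Delta$ is not pseudo-effective, termination is a different statement and is taken from \cite[Corollary 3.5]{Sti21}. Your proposal contains no argument at all for the non-pseudo-effective case, so as written it does not prove the theorem; to repair it you would either have to reproduce the mechanism of \cite[Proposition 9.18]{BMP+20} (which uses pseudo-effectivity) together with the Mori-fiber-space termination of \cite{Sti21}, or supply a genuinely new argument for the vertical flips, which the restriction-to-fiber-surfaces idea does not provide.
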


\

\noindent\textbf{Acknowledgement}. The authors would like to thank their advisor Christopher D. Hacon for introducing this question, and giving useful advice and encouragements. The authors also want to thank Jihao Liu and Jingjun Han for useful discussions. The authors were partially supported by NSF research grants no: DMS-1801851, DMS-1952522 and by a grant from the Simons Foundation; Award Number: 256202.

\section{Preliminaries}
A scheme $X$ is called a variety over a field $k$ (resp. a Dedekind scheme $V$) if it is integral, separated, and of finite type over $k$ (resp. $V$). We refer the reader to \cite{KM98} for the standard definitions and results of the Minimal Model Program and to \cite{BMP+20} for those in mixed characteristic. We also refer the readers to \cite{HW19a} for a brief introduction to F-regularity and \cite{BMP+20} for $+$-regularity (which is also called T-regularity in \cite{TY20}).\par
We remark that in this paper, unless otherwise stated, if $(X,B)$ is a pair, then $B$ is a $\Qq$-divisor. We say that $(X,\Delta^c)$ is an $m$-complement of $(X,\Delta)$ if $(X,\Delta^c)$ is log canonical, $m(K_X+\Delta^c)\sim0$, and $\Delta^c\ge\Delta^*$, where $\Delta^*=\frac{1}{m}\lf(m+1)\Delta\rf$. If $\Delta$ has standard coefficients, then $\Delta^*=\frac{1}{m}\lceil m\Delta\rceil$, and so the last condition is equivalent to $\Delta^c\ge\Delta$. We say that a morphism $f:X\to Y$ is a projective contraction if it is a projective morphism of quasi-projective varieties and $f_*\Oo_X=\Oo_Y$.

\begin{set}\label{setting of R}
In this article, $R$ is an excellent local domain with a dualizing complex and positive-characteristic residue field.
\end{set}

\begin{defn}
Let $(X,\Delta)$ be a log canonical pair. We say that $(X,\Delta)$ is qdlt if for every log canonical centre $x\in X$ of codimension $k>0$, there exist distinct irreducible divisors $D_1,...,D_k\subseteq\Delta^{=1}$ such that $x\in W:=D_1\cap...\cap D_k$.
\end{defn}

\begin{rem}[{\cite[Remark 2.4]{HW19b}}]\label{rem: generic point of k-stratum has codim k}
Note that if $(X,\Delta)$ is log canonical and $x$ is a generic point of a stratum $W:=D_1\cap...\cap D_k$ of $\Delta^{=1}$, then $\text{codim}~x=k$.
\end{rem}

\begin{lem}[cf. {\cite[Lemma 2.5]{HW19b}}]
Let $(X,\Delta)$ be a $\Qq$-factorial qdlt pair of dimension $n\le3$ over an excellent Dedekind separated scheme. Then
\item[(1)]
$(D^n,\Delta_{D^n})$ is qdlt, where $g:D^n\to D$ is the normalization of a divisor $D\subseteq \Delta^{=1}$ and $K_{D^n}+\Delta_{D^n}=(K_X+\Delta)|_{D^n}$,
\item[(2)]
the strata of $\Delta^{=1}$ are normal up to a universal homeomorphism, and
\item[(3)]
the log canonical centres of $(X,\Delta)$ coincide with the generic points of strata of $\Delta^{=1}$.
\end{lem}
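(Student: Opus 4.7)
My plan is to follow \cite[Lemma 2.5]{HW19b} closely, replacing the characteristic-$p$ $F$-regularity inputs used there with the mixed-characteristic tools developed in \cite{BMP+20} and \cite{TY20}. The three parts are intertwined: working at generic points of strata, (3) identifies which points need analysis, (2) supplies the local structure, and (1) follows formally from adjunction. I would therefore prove them in the order (3), (2), (1).

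For (3), let $x\in X$ be an lc centre of codimension $k$. The qdlt hypothesis furnishes $D_1,\dots,D_k\subseteq\Delta^{=1}$ with $x\in W:=D_1\cap\cdots\cap D_k$, and Remark \ref{rem: generic point of k-stratum has codim k} shows that the generic point of the component of $W$ containing $x$ already has codimension $k$, hence equals $x$. The converse --- that the generic point of every stratum of $\Delta^{=1}$ is an lc centre --- follows from iterated adjunction along the $D_{i_j}$: each step reduces the codimension by one while preserving a coefficient-one divisor of the induced pair, and so exhibits a divisorial valuation of discrepancy $-1$ whose centre is the prescribed stratum.

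For (2), I would stratify by codimension. Codimension-$n$ strata are closed points and are trivially normal. A codimension-one stratum is a prime divisor $D\subseteq\Delta^{=1}$, which is integral; at a codimension-one point of $D$ either $D$ is the only component of $\Delta^{=1}$ passing through the point (in which case $D$ is normal there by standard divisor-extraction arguments) or a second component meets $D$, and we are reduced to a two-dimensional $\Qq$-factorial qdlt pair, handled by the surface-adjunction results of \cite{BMP+20} and their mixed-characteristic refinements in \cite{TY20}. For $n=3$, a codimension-two stratum is a curve $D_1\cap D_2$; a general hyperplane section reduces this to the two-dimensional case, using that qdlt is stable under such sections (again relying on \cite{BMP+20,TY20}). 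The output in each case is that the stratum is normal up to universal homeomorphism, as required.

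Finally, for (1), let $D\subseteq\Delta^{=1}$. By (2), $g\colon D^n\to D$ is a universal homeomorphism, and the different $\Delta_{D^n}$ satisfies $K_{D^n}+\Delta_{D^n}=(K_X+\Delta)|_{D^n}$ with $(D^n,\Delta_{D^n})$ log canonical (standard adjunction). To verify qdlt, let $y\in D^n$ be an lc centre of codimension $k-1$. Its image $\bar y$ in $X$ is, by (3), the generic point of a stratum of $\Delta^{=1}$ of codimension $k$ contained in $D$, so after relabelling we may write $\bar y$ as the generic point of $D\cap D_2\cap\cdots\cap D_k$ with the $D_i\subseteq\Delta^{=1}$ distinct from $D$. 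The normalizations of $D\cap D_i$ then appear with coefficient one in $\Delta_{D^n}^{=1}$ by the qdlt form of the different, and they witness the qdlt property at $y$. The main obstacle throughout is (2) in codimension $\ge 2$: in mixed characteristic strata need only be normal up to universal homeomorphism rather than honestly normal, and obtaining this cleanly is exactly what requires the surface-adjunction and Bertini-type machinery of \cite{BMP+20} and \cite{TY20}; parts (1) and (3) then follow essentially formally.
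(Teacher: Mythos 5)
Your outline diverges from the paper's proof and contains two genuine gaps. First, in (1) you assert that the image $\bar y=g(y)$ of an lc centre of $(D^n,\Delta_{D^n})$ is, "by (3)", the generic point of a stratum of $\Delta^{=1}$; but (3) only characterizes lc centres of $(X,\Delta)$, so you must first prove that $g(y)$ is an lc centre of $(X,\Delta)$. This is exactly the non-formal step in the paper's proof of (1): if $g(y)$ were not an lc centre, one could choose a divisor $H$ through $g(y)$ and $\epsilon>0$ with $(X,\Delta+\epsilon H)$ lc at $g(y)$, and adjunction would force $(D^n,\Delta_{D^n}+\epsilon H|_{D^n})$ to be lc at $y$, a contradiction. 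Without this, your verification of qdlt-ness of the adjoint pair is unsupported (once it is in place, producing the witnesses $D_2|_{D^n},\dots,D_k|_{D^n}$ from the qdlt witnesses upstairs, together with Remark \ref{rem: generic point of k-stratum has codim k}, does match the paper).

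Second, your route to (2) via general hyperplane sections through a codimension-two stratum, "using that qdlt is stable under such sections", is not available in this setting: the base is an arbitrary excellent Dedekind scheme, residue fields may be finite, and neither \cite{BMP+20} nor \cite{TY20} supplies a Bertini-type statement preserving qdlt-ness; likewise "standard divisor-extraction arguments" do not give honest normality of a coefficient-one divisor here --- only normality up to universal homeomorphism, and the actual input is \cite[Lemma 2.28]{BMP+20} applied to the plt (hence dlt) pair $(X,\Delta-\lf\Delta\rf+D)$. The paper's argument avoids all of this: it first records that each irreducible $D\subseteq\Delta^{=1}$ is normal up to universal homeomorphism, proves (1) directly by the perturbation argument above, and then deduces (2) and (3) by induction on the dimension, transporting strata of $\Delta^{=1}$ to strata of $\Delta_{D^n}^{=1}$ through the universal homeomorphism $D^n\to D$. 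Your ordering ((3), (2), (1)) also risks circularity: the converse direction of (3) (generic points of strata are lc centres) via "iterated adjunction" needs the coefficient-one behaviour and lc-ness of the successively induced pairs, which is essentially the content of (1).
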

\begin{proof}
We work in a sufficiently small neighborhood of a point of $X$.\par
First, note that irreducible divisors in $\Delta^{=1}$ are normal up to a universal homeomorphism. Indeed, if $D\subseteq\Delta^{=1}$ is an irreducible divisor, then$(X,\Delta-\lf\Delta\rf+D)$ is plt and hence dlt. Then we can apply \cite[Lemma 2.28]{BMP+20}.\par
Let $x\in D^n$ be a log canonical centre of $(D^n,\Delta_{D^n})$. Then $g(x)$ is a log canonical centre of $(X,\Delta)$. Indeed, otherwise there exist a non-zero divisor $H$ passing through $g(x)$ and $\epsilon>0$ such that $(X,\Delta+\epsilon H)$ is lc at $g(x)$. Thus, by adjuntion, $(D^n,\Delta_{D^n}+\epsilon H|_{D^n})$ is lc at $x$, which is impossible.\par
Let $k$ be the codimension of $g(x)$ in $X$. By definition of qdlt pairs, there exist divisors $D_1,...,D_k\subseteq\Delta^{=1}$ with $D_1=D$, such that
$$
g(x)\in D_1\cap...\cap D_k.
$$
Then $x\in D_2|_{D^n}\cap...\cap D_k|_{D^n}$, where $D_i|_{D^n} \subseteq \Delta^{=1}_{D^n}$ and $D_i|_{D^n}$ and $D_j|_{D^n}$ have no common components for $i, j \ge 2$. Since $x$ is of codimension $k-1$ in $D^n$, this shows that $(D^n,\Delta_{D^n})$ is qdlt at $x$. Hence (1) holds.\par
As for (2) and (3), they can be proven by induction on the dimension $n$ and the fact that $D$ is normal up to a universal homeomorphism. 
\end{proof}

\begin{lem}[Inversion of adjunction]\label{lem: inversion of adjunction for qdlt pair}
Consider a three-dimensional $\Qq$-factorial log pair $(X,S+E+B)$ over an excellent Dedekind separated scheme, where $S,E$ are irreducible divisors and $\lf B\rf=0$. Write $K_{S^n}+C_{S^n}+B_{S^n}=(K_X+S+E+B)|_{S^n}$, where $S^n$ is the normalisation of $S$, $C_{S^n}=(E\cap S)|_{S^n}$ is an irreducible divisor, and $\lf B_{S^n}\rf=0$. Assume that $(S^n,C_{S^n}+B_{S^n})$ is plt. Then $(X,S+E+B)$ is qdlt in a neighborhood of $S$.
\end{lem}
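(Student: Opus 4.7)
The plan is to apply inversion of adjunction and then use the plt hypothesis on $(S^n, C_{S^n}+B_{S^n})$ to classify the log canonical centers of $(X, S+E+B)$ near $S$. Since $(S+E+B)^{=1}=S+E$ has only two prime components, the qdlt condition reduces to showing that $(X, S+E+B)$ is log canonical near $S$, that its codimension-$2$ log canonical centers are components of $E\cap S$, and that no closed point of $S$ is a log canonical center.

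First, for each $\epsilon\in(0,1)$, the pair $(S^n, (1-\epsilon)C_{S^n}+B_{S^n})$ is klt, so by the plt form of inversion of adjunction in mixed characteristic (cf.\ \cite{BMP+20, TY20}), $(X, S+(1-\epsilon)E+B)$ is plt in a neighborhood $U$ of $S$. Using $a(F; X, S+E+B)=a(F;X,S+(1-\epsilon)E+B)-\epsilon\cdot\ord_F(E)$ and letting $\epsilon\to 0^+$ yields $(X, S+E+B)$ log canonical on $U$; we replace $X$ by $U$. By the standard adjunction correspondence, the log canonical centers of $(X, S+E+B)$ contained in $S$ are in bijection with those of $(S^n, C_{S^n}+B_{S^n})$. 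As the latter is plt with $\lf C_{S^n}+B_{S^n}\rf = C_{S^n}$ prime, these are $S^n$ and $C_{S^n}$, corresponding on $X$ to $S$ and the irreducible curve $E\cap S$; in particular, no closed point of $S$ is a log canonical center.

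It remains to rule out log canonical centers $W\subsetneq E$ with $W\not\subseteq S$. Since log canonical centers are finite in number (from finiteness of log canonical places on any log resolution), shrinking the neighborhood of $S$ disposes of those disjoint from $S$. If $W\cap S\neq\emptyset$, pick a closed point $p\in W\cap S\subseteq E\cap S$. Choose a log resolution $\pi:Y\to X$ on which a log canonical place $F$ of $W$ appears as an exceptional prime divisor, and such that $\tilde\pi:\tilde S\to S^n$ is a log resolution of $(S^n, C_{S^n}+B_{S^n})$. Any irreducible component of $F\cap\tilde S$ is a curve on $\tilde S$ whose image in $S$ lies in the finite set $W\cap S$, and hence maps to a single point of $S^n$, making it $\tilde\pi$-exceptional on $\tilde S$. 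Comparing coefficients in the adjunction formula for $\tilde S\to S^n$ (pulled back from $(X, S+E+B)$) on such a $\tilde\pi$-exceptional curve $G$: the contribution of $F$ as an LC place (with $-a_F=1$) is $\ge 1$ whenever $G\subseteq F|_{\tilde S}$, while plt of $(S^n, C_{S^n}+B_{S^n})$ forces the corresponding right-hand coefficient to be strictly less than $1$, a contradiction. Hence $F\cap\tilde S$ has no divisorial component on $Y$ and so is empty as a subscheme; a further blow-up argument (blowing up an appropriate smooth center in the fibre above $p$ on which $F$ and $\tilde S$ both accumulate) then produces a new log canonical place centered at $p\in S$, contradicting the second paragraph. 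Thus no such $W$ exists.

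Combining these, the log canonical centers of $(X, S+E+B)$ in a neighborhood of $S$ are exactly $S$, $E$, and the irreducible components of $E\cap S$, which coincide with the generic points of strata of $(S+E+B)^{=1}$; this is the qdlt condition. The main technical obstacle lies in the last step, where the delicate adjunction and blow-up analysis serve as a substitute for the Ambro--Kawamata intersection theorem on log canonical centers, which is not directly available in mixed characteristic; proper execution requires tracking discrepancies precisely through composed blow-ups.
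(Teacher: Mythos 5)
Your first paragraph (lc-ness of $(X,S+E+B)$ near $S$ via the $(1-\epsilon)E$ perturbation and the plt inversion of adjunction of \cite{BMP+20,TY20}) is fine, but the two steps that carry the actual content are not proved. First, the ``standard adjunction correspondence'' you invoke --- that log canonical centres of $(X,S+E+B)$ contained in $S$ are in bijection with those of $(S^n,C_{S^n}+B_{S^n})$ --- is not standard in this setting: the direction you need (a centre of the threefold pair inside $S$ forces a non-klt centre of the adjoint surface pair) is itself a precise form of inversion of adjunction and has to be extracted from \cite[Corollary 4.10]{TY20} by a perturbation argument; it cannot simply be asserted. Second, and more seriously, your treatment of a centre $W\not\subseteq S$ with $W\cap S\ni p$ collapses at the end: after showing $F\cap\tilde S$ has no curve component (that part can be made to work), you claim that ``a further blow-up argument \dots produces a new log canonical place centered at $p\in S$.'' There is no reason a blow-up of a centre in the fibre over $p$ has log discrepancy $0$; producing an lc place at the intersection of two lc centres is exactly the Ambro--Kawamata intersection statement, which (as you yourself note) is unavailable here and cannot be replaced by discrepancy bookkeeping through blow-ups --- it genuinely needs a connectedness/vanishing input. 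So the proof has a real gap at its decisive step.

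The paper avoids all of this with one uniform perturbation: if $Z$ is a putative lc centre of codimension $\ge 2$, different from $C=E\cap S$ and meeting $S$ (whether or not $Z\subseteq S$), choose a general Cartier divisor $H\supseteq Z$; for $0<\delta\ll 1$ and then $0<\epsilon\ll 1$ the pair $(X,S+(1-\epsilon)E+B+\delta H)$ is not lc at $Z$, hence not lc at points of $S$, while $(S^n,(1-\epsilon')C_{S^n}+B_{S^n}+\delta H|_{S^n})$ is klt, and this directly contradicts \cite[Corollary 4.10]{TY20}. Your argument would become correct if you replaced your second and third paragraphs by this $H$-perturbation step; as written, both the asserted bijection and the final blow-up step are unsubstantiated, and the latter is not repairable by the method you describe.
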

\begin{proof}
Assume by contradiction that $(X,S+E+B)$ admits a log canonical centre $Z$ of codimension at least two, which is different from $C=E\cap S$ and intersects $S$. Let $H$ be a general Cartier divisor containing $Z$. Then for any $0<\delta\ll 1$ we can find $0<\epsilon\ll1$ such that $(X,S+(1-\epsilon)E+B+\delta H)$ is not lc at $Z$. On the other hand, $(S^n,(1-\epsilon')C_{S^n}+B_{S^n}+\delta H|_{S^n})$ is klt for any $0<\epsilon'\ll1$. This contradicts \cite[Corollary 4.10]{TY20}.
\end{proof}

\begin{lem}[{\cite[Lemma 2.7]{HW19b}}]\label{lem: S_1 and S_2 has no intersection after flop if not qdlt}
Let $(X,S_1+S_2+B)$ be a three-dimensional $\Qq$-factorial qdlt pair where $S_1,S_2$ are irreducible divisors and $\lf B\rf=0$. Let $$
f:(X,S_1+S_2+B)\dasharrow(X',S'_1+S'_2+B')
$$
be a $(K_X+S_1+S_2+B)$-flop of a curve $\Sigma$ for a relative-Picard-rank-one flopping contraction $g:X\to Z$. Suppose that $S_1\cdot\Sigma<0$. Then either $(X',S'_1+S'_2+B')$ is qdlt or $S'_1\cap S'_2=\emptyset$ in a neighbourhood of $\Exc(g')$, where $g':X'\to Z$ is the flopped contraction. 
\end{lem}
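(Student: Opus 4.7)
The plan is to split into three cases according to how $\Sigma$ sits with respect to $S_2$. Since $S_1\cdot\Sigma<0$, the curve $\Sigma$ lies in $S_1$, and the standard sign-reversal of intersection numbers along a flop gives $S_1'\cdot\Sigma'>0$, so $\Sigma'\not\subseteq S_1'$. I will treat (i) $\Sigma\cap S_2=\emptyset$, (ii) $\Sigma\subseteq S_2$, and (iii) $\Sigma\not\subseteq S_2$ but $\Sigma\cap S_2\neq\emptyset$ in turn.

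In case (i), $S_2\cdot\Sigma=0$, hence $S_2'\cdot\Sigma'=0$ by the flop; if $\Sigma'\subseteq S_2'$, running the inverse flop would force $\Sigma\subseteq S_2$, a contradiction. Therefore $\Sigma'\cap S_2'=\emptyset$, and since any curve component of $S_1'\cap S_2'$ meeting a neighbourhood of $\Sigma'$ would (via the isomorphism of the flop away from the exceptional loci) correspond to a component of $S_1\cap S_2$ meeting a neighbourhood of $\Sigma$—impossible under $S_2\cap\Sigma=\emptyset$—we obtain $S_1'\cap S_2'=\emptyset$ near $\Exc(g')$, the second alternative. In case (ii), $\Sigma$ is a component of $S_1\cap S_2$; since the qdlt hypothesis combined with $\lf S_1+S_2+B\rf=S_1+S_2$ forbids codimension-$3$ lc centres, distinct components of $S_1\cap S_2$ are disjoint, so we may assume $S_1\cap S_2=\Sigma$ in a neighbourhood of $\Sigma$. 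The same transfer argument together with $\Sigma'\not\subseteq S_1'$ (and irreducibility of $\Sigma'$) again yields the second alternative.

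Case (iii), where $S_2\cdot\Sigma>0$ and consequently $\Sigma'\subseteq S_2'$, is the substantive case; we prove $(X',S'_1+S'_2+B')$ is qdlt by Inversion of Adjunction (Lemma~\ref{lem: inversion of adjunction for qdlt pair}). By the preceding adjunction lemma, the pairs $(S_i^n,C_{S_i^n}+B_{S_i^n})$ are $2$-dimensional plt for $i=1,2$. Since $g|_{S_1}$ contracts $\Sigma$ while $g'|_{S_1'}$ has no exceptional curves (as $\Sigma'\not\subseteq S_1'$), the restriction $g'|_{S_1'}\colon S_1'\to g(S_1)$ is finite birational, and the flop induces a birational morphism $\phi_1\colon S_1^n\to (S_1')^n$ contracting the lift of $\Sigma$. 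The flop being crepant for $K_X+S_1+S_2+B$ translates via adjunction into $\phi_1$ being crepant, and since plt is preserved under crepant birational morphisms, $((S_1')^n,C_{(S_1')^n}+B_{(S_1')^n})$ is plt. Lemma~\ref{lem: inversion of adjunction for qdlt pair} then gives qdlt in a neighbourhood of $S_1'$. The symmetric argument on $S_2$—now with $g|_{S_2}$ finite and $g'|_{S_2'}$ contracting $\Sigma'$, since $\Sigma\not\subseteq S_2$ but $\Sigma'\subseteq S_2'$—produces a crepant birational morphism $\phi_2\colon (S_2')^n\to S_2^n$, preserves plt, and yields qdlt in a neighbourhood of $S_2'$. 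Since $\Exc(g')=\Sigma'\subseteq S_2'$, this combined qdlt covers $\Exc(g')$.

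The main obstacle is establishing that $\phi_1$ and $\phi_2$ are genuine crepant morphisms of surface pairs with irreducible reduced boundaries (as required to apply Lemma~\ref{lem: inversion of adjunction for qdlt pair}), and that no exceptional divisor of $\phi_i$ acquires coefficient $\geq 1$ in the crepant pullback. Both facts follow from the plt property of the source pair and a standard log discrepancy computation, provided that the qdlt combinatorics are tracked carefully through the adjunction formula.
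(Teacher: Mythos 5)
Your division into cases according to the position of $\Sigma$ relative to $S_2$ is a reasonable start (note the paper itself offers no argument, citing \cite[Lemma 2.7]{HW19b}), and in cases (i) and (iii) the conclusions you assert are correct. The genuine gap is case (ii), $\Sigma\subseteq S_2$, which is precisely the case in which qdlt-ness really can fail on $X'$ and the disjointness statement is the whole content of the lemma. What you actually verify there is only that a curve $\Gamma'\subseteq S_1'\cap S_2'$ with $\Gamma'\not\subseteq \Sigma'$ cannot approach $\Exc(g')$, because its strict transform would be a component of $S_1\cap S_2$ meeting $\Sigma$. But the statement you need is $S_1'\cap S_2'\cap\Exc(g')=\emptyset$, and the dangerous intersections are exactly the ones your ``transfer argument'' ignores: $S_1'$ and $S_2'$ could a priori meet at points of the flopped curve $\Sigma'$, or a component of $\Sigma'$ could a priori be contained in $S_1'\cap S_2'$. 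Note that $S_1'\cdot\Sigma'>0$ does not by itself exclude $\Sigma'\subseteq S_1'$, and by $\Qq$-factoriality any point of $S_1'\cap S_2'$ lies on a curve of $S_1'\cap S_2'$, so these possibilities must be confronted head on. Ruling them out requires an actual argument, for instance: by qdlt-ness of $(X,S_1+S_2+B)$, near $\Sigma$ the pair $(S_1^n,\Diff)$ is plt with $\Sigma$ its unique coefficient-one curve, so $S_1^n\to g(S_1)^n$ is a plt blow-up and $\Sigma$ is the unique lc place of the image germ; if a component of $\Sigma'$ lay in $S_1'\cap S_2'$ it would have coefficient one in the different on $(S_1')^n$, and a short negativity computation on $(S_1')^n$ (using that curves of $S_1\cap S_2$ other than $\Sigma$ miss $\Sigma$, hence their transforms miss $\Sigma'$) contradicts $S_2'\cdot\Sigma'>0$. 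Nothing of this sort appears in your proposal; ``the same transfer argument together with $\Sigma'\not\subseteq S_1'$'' does not deliver the conclusion.

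Two smaller points. First, ``distinct components of $S_1\cap S_2$ are disjoint'' is not an immediate consequence of ``qdlt forbids codimension-$3$ lc centres'': you need that a point where two such components meet is an lc centre of $(X,S_1+S_2+B)$, which follows from the adjunction/perturbation argument in the paper's Lemma 2.3 (a zero-dimensional lc centre of $(S_1^n,\Diff)$ maps to an lc centre of the threefold pair), so say this. Second, in case (iii) the assertion that ``plt is preserved under crepant birational morphisms'' is false as stated: for $\phi_1$ one must check that the contracted curve has log discrepancy $>0$ (true here, since $\Sigma\not\subseteq S_1\cap S_2$ and qdlt-ness prevent coefficient one), and for $\phi_2$, which \emph{extracts} $\Sigma'$, the needed bound on the coefficient of $\Sigma'$ is essentially the point at issue rather than a routine computation; also Lemma \ref{lem: inversion of adjunction for qdlt pair} requires $C_{S^n}$ irreducible, which you do not arrange. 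In fact cases (i) and (iii) admit a much softer proof: when no component of $\Sigma$ is a component of $S_1\cap S_2$, no lc centre of $(X,S_1+S_2+B)$ is contained in $\Sigma$, so every lc place of the crepant pair $(X',S_1'+S_2'+B')$ has centre equal to the strict transform of its centre on $X$; hence the lc centres of $(X',S_1'+S_2'+B')$ are exactly $S_1'$, $S_2'$ and the strict transforms of the components of $S_1\cap S_2$, and qdlt-ness follows immediately, with no surface adjunction needed.
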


\begin{lem}[{\cite[Lemma 7.13]{BMP+20}}]\label{lem: extract kollar component}
Let $(X,B)$ be a two-dimensional klt pair admitting a projective birational map $f:X\to Z=\spec R$ such that $-(K_X+B)$ is relatively nef, assuming that $R$ is as in Setting \ref{setting of R} and additionally has infinite residue field. Then there exist an $f$-exceptional irreducible curve $C$ on a blow-up of $X$ and projective birational maps $g:Y\to X$ and $h:Y\to W$ over $Z$ such that:
\begin{enumerate}
    \item $g$ extracts $C$ or is the identity if $C\subseteq X$,
    \item $(Y,C+B_Y)$ is plt,
    \item $(W,C_W+B_W)$ is plt and $-(K_W+C_W+B_W)$ is ample over $Z$,
    \item $h^*(K_W+C_W+B_W)-(K_Y+C+B_Y)\ge0$,
\end{enumerate}
where $K_Y+bC+B_Y=g^*(K_X+B)$ for $C\nsubseteq \Supp B_Y$, $C_W:=h_*C\neq0$, and $B_W := h_* B_Y$.
\end{lem}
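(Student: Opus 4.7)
The plan is to produce $C$ as a single extremal log canonical place of an auxiliary lc pair, and then to run a surface MMP over $Z$ to bring everything into the required plt/ample form. Working locally over $Z=\Spec R$ with closed point $z$ and infinite residue field $k$, I would exploit the infinity of $k$ to choose a general element $s\in\mathfrak{m}_R$ so that the effective divisor $D:=\mathrm{div}(f^*s)$ on $X$ is supported on $f^{-1}(z)$ and sits in suitably general position with respect to $B$. Klt-ness of $(X,B)$ forces $c:=\lct(X,B;D)>0$, and $(X,B+cD)$ is then strictly log canonical; genericity of $s$ together with connectedness of non-klt loci in mixed characteristic (\cite{BMP+20}) singles out a unique minimal non-klt place $E$.

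Next, I would extract $E$ via a log resolution $\mu:\widetilde X\to X$ of $(X,B+cD)$ on which $E$ appears as an exceptional prime divisor $E_j$, with the remaining $\mu$-exceptional divisors $E_i$ having log discrepancy $a_i>0$. Running a $(K_{\widetilde X}+\widetilde B+E_j+(1-\epsilon)\sum_{i\neq j}E_i)$-MMP over $X$ for $0<\epsilon<\min_i a_i$, via the surface MMP for excellent Dedekind schemes (\cite{Tan18}), should contract exactly the $E_i$ with $i\neq j$ while preserving $E_j$. Let $g:Y\to X$ denote the resulting morphism and $C$ the strict transform of $E_j$ on $Y$ (or $g=\id$ and $C\subset X$ directly); this yields (1), and $(Y,C+B_Y)$ is plt by inversion of adjunction (\cite[Corollary 4.10]{TY20}), giving (2).

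For $W$, I would write $K_Y+C+B_Y=g^*(K_X+B)+bC$ with $b\in(0,1)$ by klt-ness of $(X,B)$, and run a $(K_Y+C+B_Y)$-MMP with scaling over $Z$, perturbing the coefficient of $C$ slightly so that $C$ is not itself contracted. Since the unique $g$-exceptional divisor is $C$ and $-g^*(K_X+B)$ is relatively nef, the MMP should contract the remaining $(f\circ g)$-exceptional curves disjoint from $C$ and terminate on $h:Y\to W$ with $(W,C_W+B_W)$ plt and $-(K_W+C_W+B_W)$ ample over $Z$, giving (3). Property (4) then follows from the negativity lemma applied to $h$, since the difference $h^*(K_W+C_W+B_W)-(K_Y+C+B_Y)$ is $h$-nef and $h$-exceptional.

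The main obstacle will be to verify that the two MMPs behave as described: in the extraction step, that precisely the undesired exceptional divisors get contracted while $E_j$ and the plt property are preserved; and in the subsequent step, that $C$ survives and $-(K_W+C_W+B_W)$ becomes ample. These preservation statements rely on connectedness of non-klt places, on inversion-of-adjunction in mixed characteristic, and on the infinite residue field hypothesis used to choose $s$ in general position, all supplied by \cite{BMP+20,TY20,Tan18}.
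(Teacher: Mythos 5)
The paper does not prove this lemma; it is cited verbatim from \cite[Lemma 7.13]{BMP+20}. You are therefore reconstructing a proof from scratch, and your lct-plus-MMP strategy is a reasonable shape for it, but as written it has two real gaps.

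First, the uniqueness of the non-klt place of $(X,B+cD)$ is asserted, not proved. Connectedness of the non-klt locus over a local base is a statement about the locus, not about the set of divisorial valuations with log discrepancy zero: a connected non-klt locus can still carry several distinct non-klt places (for instance a chain of exceptional curves all attaining the lct simultaneously). Generic choice of $s$ narrows the possibilities but does not by itself force a single place; one needs an explicit tie-breaking step, e.g.\ replacing $cD$ by $c'D+\delta H$ for a general relatively ample $H$ with $c'<c$, $0<\delta\ll 1$, chosen so that exactly one divisorial valuation computes the threshold. This tie-breaking is precisely where the infinite residue field is used, and without it the later steps (extraction of a \emph{single} divisor, plt-ness of $(Y,C+B_Y)$ via $(Y,C+B_Y)\le(Y,\text{pullback of lc pair})$) do not go through. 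Relatedly, in your extraction MMP you should be running the program for $K_{\widetilde X}+\widetilde{B+cD}+E_j+(1-\epsilon)\sum_{i\ne j}E_i$ rather than for the pair with $\widetilde B$ alone: with only $\widetilde B$ the relative numerical class over $X$ is $a_j^{(1)}E_j+\sum_{i\neq j}(a_i^{(1)}-\epsilon)E_i$ with $a_j^{(1)}>0$ (since $(X,B)$ is klt), so the negativity lemma forces the MMP to contract $E_j$ as well and one ends back at $X$.

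Second, the direction of the MMP producing $h\colon Y\to W$ is reversed. Running a $(K_Y+C+B_Y)$-MMP over $Z$ contracts $(K_Y+C+B_Y)$-negative curves, terminates at a model where $K_W+C_W+B_W$ is relatively nef (or a Mori fibre space), and the standard discrepancy comparison gives $h^*(K_W+C_W+B_W)-(K_Y+C+B_Y)\le 0$ --- the \emph{opposite} of item (4), and incompatible with $-(K_W+C_W+B_W)$ being relatively ample. What is actually needed is to contract the exceptional curves on which $K_Y+C+B_Y$ is non-negative (equivalently, curves $\Gamma$ with $C\cdot\Gamma>0$ large enough, since $K_Y+C+B_Y=g^*(K_X+B)+(1-b)C$ with $-g^*(K_X+B)$ relatively nef). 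Because $C^2<0$ we have $(K_Y+C+B_Y)\cdot C<0$, so $C$ survives this contraction, the resulting $-(K_W+C_W+B_W)$ is positive on all remaining exceptional curves hence relatively ample, and the negativity lemma now gives $h^*(K_W+C_W+B_W)-(K_Y+C+B_Y)\ge 0$ as required. Your invocation of the negativity lemma is the right tool but is applied with the sign appropriate to the anti-contraction, not to the $(K_Y+C+B_Y)$-MMP you claim to run.
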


\begin{lem}[{\cite[Theorem 7.14]{BMP+20}}]
Let $(X,B)$ be a two-dimensional klt pair admitting a projective birational map $f:X\to Z=\spec R$ such that $-(K_X+B)$ is relatively ample. Suppose that $R$ is as in Setting \ref{setting of R} and has residual characteristic $p>5$, and that $B$ has standard coefficients. Then $(X,B+\epsilon D)$ is globally $+$-regular over $Z$ for every effective divisor $D$ and $0\le\epsilon\ll1$.
\end{lem}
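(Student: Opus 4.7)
The plan is to reduce the problem to global $+$-regularity of a plt pair via the Kollár component extraction of the preceding lemma, and then to lift a splitting from the plt centre by a trace argument.

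First, I would apply the preceding lemma to produce projective birational maps $g:Y\to X$ and $h:Y\to W$ over $Z$, yielding a plt pair $(W,C_W+B_W)$ with $-(K_W+C_W+B_W)$ relatively ample and $h^*(K_W+C_W+B_W)\ge K_Y+C+B_Y$. Since global $+$-regularity is preserved by birational morphisms satisfying this discrepancy inequality (via the trace map on cohomology with coefficients in the absolute integral closure), it suffices to verify that $(W,C_W+B_W+\epsilon D_W)$ is globally $+$-regular over $Z$, where $D_W$ is the pushforward of the strict transform of $D$.

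Next, I would apply adjunction along the plt centre $C_W$ to obtain a klt pair $(C_W, B_{C_W})$ on a one-dimensional base with $-(K_{C_W}+B_{C_W})$ ample. Since $B$ has standard coefficients and $\dim C_W=1$, this one-dimensional pair is globally $+$-regular by direct computation. I would then lift the resulting splitting on $C_W$ to a neighborhood of $C_W$ in $W$: the ampleness of $-(K_W+C_W+B_W)$ ensures that the relevant obstruction to lifting vanishes after passing to a suitable finite cover factoring through $R^+$, and the perturbation $\epsilon D_W$ is absorbed because the ampleness leaves a positive wiggle room.

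The main obstacle will be the lifting step. Transferring the $+$-splitting from $C_W$ to $W$ requires the perfectoid machinery underlying $+$-regularity, specifically the vanishing of cohomology for line bundles that are ample modulo a $+$-regular boundary. In positive characteristic, as in \cite{HW19b}, this is carried out using Frobenius and the Cartier operator, and the assumption $p>5$ enters because adjunction then produces standard coefficients on $C_W$ in the range where two-dimensional klt surfaces are classified finely enough for the splitting to be exhibited. In mixed characteristic the Frobenius argument must be replaced by the trace from the absolute integral closure as developed in \cite{BMP+20}, and this trace argument inherits the same residual characteristic bound $p>5$.
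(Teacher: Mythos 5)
The paper does not give a proof of this lemma at all: it is quoted verbatim from \cite[Theorem 7.14]{BMP+20} and used as a black box throughout, so there is no internal argument in this paper to compare your sketch against.

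On its own merits, your outline is pointed in the right direction (reduce to the plt pair $(W,C_W+B_W)$ via the Koll\'ar-component extraction, then reduce to the curve $C_W$), but the explanation of where $p>5$ enters is not correct. The actual reduction is to global F-regularity of the base change $(C_{\bar k},B_{C_{\bar k}})$ to the algebraic closure of the residue field, which is a genuine positive-characteristic statement, not a one-dimensional $+$-regularity statement. Adjunction from a plt surface pair with standard coefficients to its plt centre always yields standard coefficients on the curve (by Shokurov's computation of the different), independently of $p$; what is constrained is the list of log Fano structures $(\Pp^1,\sum(1-\tfrac1{n_i})P_i)$, namely the Platonic triples $(2,2,n)$, $(2,3,3)$, $(2,3,4)$, $(2,3,5)$. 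The assumption $p>5$ is precisely what guarantees that the worst case $\tfrac12 P_1+\tfrac23 P_2+\tfrac45 P_3$ is globally F-regular; when $p=5$ it fails, which is exactly the content of the paper's Remark~\ref{rem: (2,3,5) case in char 5}. Your sentence asserting that $p>5$ makes ``adjunction produce standard coefficients on $C_W$'' in a range where ``two-dimensional klt surfaces are classified finely enough'' conflates these points: the coefficients are standard regardless of $p$, the classification being invoked is of log Fano curves rather than surfaces, and the arithmetic condition is on which Platonic cases are F-regular. The lifting step you describe (transferring a splitting along $C_W$ to $W$) is closer in spirit to the actual [BMP+20] argument, which propagates F-regularity of the exceptional curve in the closed fiber to global $+$-regularity of $(W,C_W+B_W)$ via the trace from $R^+$.
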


\begin{rem}\label{rem: (2,3,5) case in char 5}
If $p=5$, then the above proposition remains true unless $B_C=\frac{1}{2}P_1+\frac{2}{3}P_2+\frac{4}{5}P_3$ for three distinct points $P_1,P_2$ and $P_3$.
\end{rem}

In what follows we need an analogue of \cite[Lemma 2.11]{HW19b} in mixed characteristic. The proof is similar except that we need extra consideration in the last of the proof.

\begin{lem}[cf. {\cite[Lemma 2.11]{HW19b}}]\label{lem: 6-complement with an integral boundary}
With notation as in Lemma \ref{lem: extract kollar component}, suppose that $p>3$ and  $(X,B)$ admits a 6-complement $(X,E+B^c)$, where $E$ is a non-exceptional irreducible curve intersecting the exceptional locus over $Z$. Then for any effective divisor $D$, $(X,B+\epsilon D)$ is globally $+$-regular over $Z$ for any $0\le\epsilon\ll1$.
\end{lem}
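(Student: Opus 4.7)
The strategy mirrors \cite[Lemma 2.11]{HW19b}, with global $+$-regularity playing the role of global F-regularity. When $p > 5$ the conclusion already follows from the theorem preceding Remark \ref{rem: (2,3,5) case in char 5}, so the substantive case is $p = 5$, in which we must verify that the exceptional configuration of that remark does not arise. After a faithfully-flat base change I would assume the residue field of $R$ is infinite and apply Lemma \ref{lem: extract kollar component} to extract a Koll\'ar component $C$, obtaining projective birational maps $g : Y \to X$ and $h : Y \to W$ over $Z = \spec R$ with $(W, C_W + B_W)$ plt, $-(K_W + C_W + B_W)$ relatively ample, and $h^*(K_W + C_W + B_W) \ge K_Y + C + B_Y$.

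The next step is to transfer the 6-complement $(X, E + B^c)$ along $g^{-1}_*$ and $h_*$ to a 6-complement $(W, C_W + E_W + B_W^c)$ of $(W, C_W + B_W)$, where $E_W := h_* g^{-1}_* E$. Since $E$ meets the exceptional locus of $f$ in $X$ and this locus corresponds under the birational maps to a divisor meeting $C_W$, the divisor $E_W$ is nonzero and meets $C_W$. Adjunction along $C_W$ then yields a 6-complement $(C_W, P + B_{C_W}^c)$ of $(C_W, B_{C_W})$, where $P := E_W|_{C_W}$ has a component of coefficient one.

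Because $(C_W, B_{C_W})$ is log Fano of dimension one, it is a rational curve with $\deg(-K_{C_W}) = 2$, and the 6-complement condition $\deg(K_{C_W} + P + B_{C_W}^c) = 0$ forces $\deg B_{C_W}^c \le 1$. If $B_{C_W} \ge \tfrac{1}{2}P_1 + \tfrac{2}{3}P_2 + \tfrac{4}{5}P_3$ as in Remark \ref{rem: (2,3,5) case in char 5}, then $B_{C_W}^c \ge \tfrac{1}{2}P_1 + \tfrac{2}{3}P_2 + \tfrac{5}{6}P_3$, whose degree equals $2 > 1$, a contradiction. Hence the exceptional $(2,3,5)$-configuration is ruled out, so by Remark \ref{rem: (2,3,5) case in char 5} together with the preceding theorem, $(W, C_W + B_W + \epsilon D_W)$ is globally $+$-regular over $Z$ for $0 \le \epsilon \ll 1$, where $D_W := h_* g^{-1}_* D$.

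Finally I would descend this to $X$. Item (4) of Lemma \ref{lem: extract kollar component} lets one pull back global $+$-regularity from $(W, C_W + B_W + \epsilon D_W)$ to $(Y, C + B_Y + \epsilon D_Y)$, and since $g$ is projective birational and contracts only $C$, pushing forward gives global $+$-regularity of $(X, B + \epsilon D)$. The main obstacle, precisely the ``extra consideration'' flagged before the lemma, is this final descent through the birational models $g$ and $h$: rather than the F-regularity pullback--pushforward machinery of \cite{HW19b}, one must invoke the mixed-characteristic $+$-regularity analogues developed in \cite{BMP+20, TY20}.
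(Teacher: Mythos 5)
Your overall strategy (extract a Koll\'ar component, transfer the 6-complement to $W$, examine adjunction to the Koll\'ar curve, descend) is the right shape, but it has a genuine gap at the key technical step, and it also skips the part of the argument that the authors explicitly flag as the new ``extra consideration.''

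The gap: you write that pushing the complement $(X,E+B^c)$ along $g^{-1}_*$ and $h_*$ gives a 6-complement $(W, C_W + E_W + B_W^c)$ of $(W, C_W + B_W)$, and you then use $\deg(K_{C_W}+P+B_{C_W}^c)=0$. But the coefficient of $C_W$ in the pushed-forward boundary is some $a\in\Qq$ that need not be $1$ and can even be negative (the paper is explicit: ``a (possibly negative) number $a$''), so $(W, aC_W + E_W + B_W^c)$ is only a \emph{sub}-lc pair and the adjunction degree on $C_W$ is not $0$ a priori. To get a degree-zero (or anti-nef) divisor on $C_W$ with coefficient-$1$ boundary, the paper adds $\lambda T_W$ for an effective exceptional anti-ample $T_W$, chooses $\lambda$ so that the coefficient of $C_W$ becomes $1$, and applies the Koll\'ar--Shokurov connectedness theorem to the perturbed pair to obtain a point of coefficient $\ge 1$ on $C$. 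Your assertion that $P=E_W|_{C_W}$ automatically ``has a component of coefficient one'' is also not immediate because of different/adjunction contributions; it is precisely the connectedness argument that produces the non-klt point on $C$.

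Beyond this, the proof you sketch omits the base change to $\bar k$ and the careful coefficient comparison under that base change (using $p>3$ to control how coefficients can jump by a factor of $p$), which is exactly the ``extra consideration in the last of the proof'' that the authors highlight as what makes the mixed-characteristic version differ from \cite[Lemma 2.11]{HW19b}. The paper also splits into the cases $-(K_{C_{\bar k}}+B'_{C_{\bar k}})$ ample (then purely F-regular by \cite[Lemma 2.9]{CTW17}) versus numerically trivial (then $a=1$, $\lambda=0$, globally F-split by \cite[Lemma 2.9]{CTW17}, hence globally F-regular by \cite[Corollary 3.10]{SS10}); this dichotomy is not addressed in your write-up, and the reduction to checking global F-regularity of $(C_{\bar k}, B_{C_{\bar k}})$ follows the machinery of \cite[Theorem 7.14]{BMP+20} rather than an explicit pull-back/push-forward of global $+$-regularity along $g$ and $h$.
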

\begin{proof}
As in the proof of \cite[Theorem 7.14]{BMP+20}, it is enough to show that $(C_{\bar{k}},B_{C_{\bar{k}}})$ is globally F-regular, where $C$ is the exceptional curve in Lemma \ref{lem: extract kollar component}, $K_C+B_C=(K_W+C_W+B_W)|_C$ and $k=H^0(C,\Oo_C)$.\par
By pulling back the complement to $Y$ and pushing down on $W$, we obtain a sub-lc pair $(W,aC_W+E_W+B_W^c)$ for a (possibly negative) number $a\in\Qq$ such that $6(K_W+aC_W+E_W+B^c_W)\sim_{Z}0$, a non-exceptional irreducible curve $E_W$ intersecting the exceptional locus over $Z$, and an effective $\Qq$-divisor $B_W^c$ such that $E_W+B^c_W\ge B_W$. Let $T_W$ be an effective exceptional anti-ample $\Qq$-divisor on $W$ and let $\lambda\ge0$ be such that the coefficient of $C_W$ in $aC_W+\lambda T_W$ is one. By the Koll\'ar-Shokurov connectedness theorem (see e.g. \cite[Theorem 5.2]{Tan18}), the pair $(W,aC_W+\lambda T_W+E_W+B^c_W)$ is not plt along $C_W$. In particular, $B_C^c$ contains a point with coefficient at least one, where
$$
(K_W+aC_W+\lambda T_W+E_W+B_W^c)|_C=K_C+B^c_C.
$$
Since $T_W$ is anti-ample over $Z$, we have that $K_C+B^c_C$ is anti-nef. In particular, there exists a $\Qq$-divisor $B_C\le B'_C\le B^c_C$ such that $(C,B'_C)$ is plt (but not klt) and $-(K_C+B'_C)$ is nef.\par
Now we claim that $(C_{\bar{k}},B'_{C_{\bar{k}}})$ is plt (but not klt), where $B'_{C_{\bar{k}}}:=(B'_C)_{\bar{k}}$. Indeed, since $C_{\bar{k}}\cong\Pp^1_{\bar{k}}$ and $K_{C_{\bar{k}}}+B'_{C_{\bar{k}}}$ is anti-nef, we have $\deg_{\bar{k}}B'_{C_{\bar{k}}}\le 2$. Noting that any coefficient of $B'_{C_{\bar{k}}}$ is either equal to the corresponding coefficient of $B'_C$ or at least $p$ times that coefficient with $p>3$, we can then easily deduce that $\lf B'_{C_{\bar{k}}}\rf=(\lf B'_C\rf)_{\bar{k}}\neq 0$ has coefficient one for each irreducible component and that $\lf(\{B'_C\})_{\bar{k}}\rf=0$, which implies our claim.\par
If $-(K_{C_{\bar{k}}}+B'_{C_{\bar{k}}})$ is ample, then $(C_{\bar{k}},B'_{C_{\bar{k}}})$ is purly F-regular by \cite[Lemma 2.9]{CTW17} (applied to perturbations of $(C_{\bar{k}},B'_{C_{\bar{k}}})$), and so $(C_{\bar{k}},B_{C_{\bar{k}}})$ is globally F-regular. If $-(K_{C_{\bar{k}}}+B'_{C_{\bar{k}}})$ is trivial, then $a=1,\lambda=0,~ 6(K_{C_{\bar{k}}}+B^c_{C_{\bar{k}}})\sim0$, and $(C_{\bar{k}},B^c_{C_{\bar{k}}})$ is plt (but not klt). Since gcd$(p,6)=1$, \cite[Lemma 2.9]{CTW17} implies that $(C_{\bar{k}},B^c_{C_{\bar{k}}})$ is globally F-split, and so $(C_{\bar{k}},B_{C_{\bar{k}}})$ is globally F-regular by \cite[Corollary 3.10]{SS10}.
\end{proof}

\begin{defn}
Let $(X,\Delta)$ be a three-dimensional dlt pair. We define its dual complex $D(\Delta^{=1})$ to be a simplex with nodes corresponding to irreducible divisors of $\Delta^{=1}$ and $k$-simplices between $k+1$ nodes corresponding to $k+1$ divisors containing a common codimension $k+1$ locus. We say that an irreducible divisor $D$ in $\Delta^{=1}$ is an articulation point of $D(\Delta^{=1})$ if $\Delta^{=1}-D$ is disconnected.
\end{defn}

\begin{lem}\label{lem: articulation point is stable between birational morphism}
Let $(X,\Delta)$ be a $\Qq$-factorial dlt threefold over an excellent Dedekind sparated scheme and let $\pi:Y\to X$ be a projective birational morphism such that $(Y,\pi_*^{-1}\Delta+E)$ is dlt, where $E$ is the exceptional locus of $\pi$. Write $K_Y+\Delta_Y=\pi^*(K_X+\Delta)$. Let $S$ be an irreducible divisor in $\Delta^{=1}$, and let $S_Y$ be its strict transform. If $S_Y$ is an articulation point of $D(\Delta^{=1}_Y)$, then $S$ is an articulation point of $D(\Delta^{=1})$.
\end{lem}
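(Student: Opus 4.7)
I argue by contrapositive: assuming $\Delta^{=1}-S$ is connected as a subset of $X$, I show $\Delta_Y^{=1}-S_Y$ is connected in $Y$. Writing $\Delta_Y=\pi_*^{-1}\Delta+\sum(-a_i)F_i$ with the $F_i$ being the $\pi$-exceptional prime divisors and $a_i=a(F_i;X,\Delta)$, each irreducible component of $\Delta_Y^{=1}$ is either the strict transform $D_Y$ of a component $D\in\Delta^{=1}$, or a $\pi$-exceptional log canonical place $F$ of $(X,\Delta)$ (one with $a(F;X,\Delta)=-1$). Since $(X,\Delta)$ is dlt, the center $\pi(F)$ of such an $F$ is an irreducible component of a stratum $D_{i_1}\cap\cdots\cap D_{i_r}$ of $\Delta^{=1}$ with $r\geq 2$ and the $D_{i_j}$ distinct; hence at most one of these components is $S$, so $\pi(F)$ always lies inside some $D\in\Delta^{=1}-S$.

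The key tool is the Koll\'ar--Shokurov connectedness theorem (cf.\ \cite[Theorem 5.2]{Tan18}): since $K_Y+\Delta_Y=\pi^*(K_X+\Delta)$ is $\pi$-numerically trivial and the coefficients of $\Delta_Y$ are all at most $1$, the non-klt locus of the sub-dlt pair $(Y,\Delta_Y)$ equals $\Supp(\Delta_Y^{=1})$, and the theorem yields that $\Delta_Y^{=1}\cap\pi^{-1}(x)$ is connected for every $x\in X$. Combined with the properness of $\pi|_{\Delta_Y^{=1}}\colon\Delta_Y^{=1}\to\Delta^{=1}$, the preimage of any connected closed subset of $\Delta^{=1}$ is a connected subset of $\Delta_Y^{=1}$.

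The set $\Delta^{=1}\setminus S\subset X$ is connected: by hypothesis $\bigcup_{D\ne S}D$ is connected, and any two $D,D'\ne S$ are linked by a chain whose consecutive intersections contain a curve (by Krull's height theorem on the $3$-fold $X$); such a curve cannot lie in $S$, for otherwise it would sit in the zero-dimensional stratum $S\cap D\cap D'$. Hence $W:=\Delta_Y^{=1}\cap\pi^{-1}(\Delta^{=1}\setminus S)=\Delta_Y^{=1}\setminus\pi^{-1}(S)$ is a connected subset of $\Delta_Y^{=1}-S_Y$ (the latter inclusion from $S_Y\subseteq\pi^{-1}(S)$). Every other component of $\Delta_Y^{=1}-S_Y$ attaches to $W$ within $\Delta_Y^{=1}-S_Y$: a strict transform $D_Y$ with $D\in\Delta^{=1}-S$ is irreducible with $D_Y\setminus\pi^{-1}(S)\subseteq W$ nonempty (since $D\not\subseteq S$); an exceptional lc place $F$ with $\pi(F)\not\subseteq S$ has $F\setminus\pi^{-1}(S)\subseteq W$ nonempty by the same token; and for an exceptional lc place $F$ with $\pi(F)\subseteq S$, the containment $\pi(F)\subseteq S\cap D$ for some $D\ne S$ together with the local snc-like structure of $(Y,\pi_*^{-1}\Delta+E)$ along $F$ forces $F\cap D_Y$ to be a nonempty codimension-$2$ locus in $Y$, producing an edge $F$--$D_Y$ in $D(\Delta_Y^{=1})\setminus\{S_Y\}$ and attaching $F$ to $D_Y\subseteq W$.

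The main obstacle I anticipate is the application of the Koll\'ar--Shokurov connectedness theorem to the sub-dlt pair $(Y,\Delta_Y)$ in mixed characteristic, whose exceptional coefficients can be negative: this is handled either by a sub-pair formulation available via the vanishing results of \cite{BMP+20} in the characteristic range under consideration, or by perturbing $\Delta_Y$ by a small effective $\pi$-exceptional divisor to reduce to an effective boundary. Justifying the last attachment case (for an exceptional lc place fully over $S$) fully rigorously requires a careful local dlt analysis and is the other technical point.
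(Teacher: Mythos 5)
Your proposal takes a genuinely different route from the paper's: the paper runs a $(K_Y + \pi_*^{-1}\Delta + E)$-MMP over $X$ (via \cite[Theorem 1.2]{TY20}) and then inherits the dual-complex bookkeeping from \cite[Lemma 2.12]{HW19b} one MMP step at a time, whereas you attempt a one-shot topological argument through Kollár--Shokurov connectedness. The idea is appealing, but the argument breaks at the last attachment step, and the break is substantive rather than technical. You claim that for an exceptional lc place $F$ with $\pi(F)\subseteq S$, the local structure ``forces $F\cap D_Y$ to be a nonempty codimension-$2$ locus.'' This is false in general: if $\pi(F)$ is a curve $C=S\cap D$, a transverse slice at the generic point of $C$ exhibits the lc places of $\Delta_Y^{=1}$ over the node of the slice as a chain $S'-E_1-\cdots-E_k-D'$, and $F$ may well be an interior $E_i$, which then does not meet $D_Y$. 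What you actually need is that $F$ is joined to $W$ by a chain inside $\Delta_Y^{=1}-S_Y$, and establishing this --- in particular over a codimension-$3$ stratum $S\cap D_1\cap D_2$, where the local dual complex is no longer just a path --- is precisely the inductive content that the \cite{HW19b} argument handles one contraction at a time. What you flag as ``a careful local dlt analysis'' is in fact the heart of the lemma, and the specific claim you rely on there is incorrect as stated.

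A second issue is the input you use for connectedness. You invoke a Kollár--Shokurov relative connectedness statement for the sub-pair $(Y,\Delta_Y)$, which can have negative coefficients, on a threefold over a mixed-characteristic base, but \cite[Theorem 5.2]{Tan18} is an excellent-\emph{surface} statement; it does not cover this situation. One would need a threefold, sub-pair, mixed-characteristic version extracted from the $+$-regularity vanishing of \cite{BMP+20}, and it is not clear this is available off the shelf in the generality you require. This is exactly the kind of extra input the MMP-based route sidesteps: the only ingredients it needs (cone theorem, existence of contractions, special termination over $X$) are already supplied in mixed characteristic by \cite{BMP+20} and \cite{TY20}, which is why the paper routes the argument through them rather than through a direct connectedness statement.
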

\begin{proof}
This follows exactly by the same proof of \cite[Lemma 2.12]{HW19b}, except that in the beginning we use \cite[Theorem 1.2]{TY20} to run a $(K_Y+\pi_*^{-1}\Delta+E)$-MMP over $X$.
\end{proof}

\begin{lem}[cf. {\cite{Wit21}}]\label{lem: semiample can be checked up to uni. homeo. in mixed char}
Let $f:Y\to X$ be a finite universal homeomorphism of schemes which are proper over a Noetherian base scheme $S$. Let $L$ be a nef line bundle on $X$ such that $f^*L$ and $L|_{X_{\Qq}}$ is semiample, where $X_\Qq$ is the generic fiber of $X\to\spec\Zz$. Then $L$ is semiample.
\end{lem}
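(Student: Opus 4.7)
The plan is to reduce to the mixed-characteristic Keel-type theorem of Witaszek \cite{Wit21}, which promotes fiberwise semiampleness of a nef line bundle on a proper scheme over $\spec\Zz$ to global semiampleness. That theorem takes as input semiampleness on the $\Qq$-fiber (which is given) together with semiampleness on each positive-characteristic fiber (which must be extracted from the hypothesis that $f^*L$ is semiample on $Y$).

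First I would note that only finitely many residue characteristics $p$ appear in the image of $X\to\spec\Zz$. For each such $p$, base-changing $f$ yields $f_p:Y_p\to X_p$ between the mod-$p$ fibers; since universal homeomorphisms are stable under base change, $f_p$ remains a finite universal homeomorphism of $\mathbb{F}_p$-schemes, and $f_p^*(L|_{X_p})$ is semiample as a restriction of $f^*L$.

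Next I would invoke the standard fact that any finite universal homeomorphism $h:A\to B$ of Noetherian $\mathbb{F}_p$-schemes admits, for $n\gg 0$, a morphism $g:B\to A$ satisfying $g\circ h=F_A^n$ and $h\circ g=F_B^n$ (one takes $n$ large enough that the $p^n$-th powers of a finite generating set of $\Oo_A$ over $h^\sharp\Oo_B$ lie in $h^\sharp\Oo_B$, and uses injectivity of $h^\sharp$ to define $g^\sharp$ uniquely). Applying this with $h=f_p$ and pulling back $f_p^*(L|_{X_p})$ along the resulting $g_p:X_p\to Y_p$ gives $(f_p\circ g_p)^*(L|_{X_p})=L|_{X_p}^{\otimes p^n}$, which is semiample as a pullback of a semiample bundle; consequently $L|_{X_p}$ is itself semiample.

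Finally, combining semiampleness of $L|_{X_\Qq}$ with that of each $L|_{X_p}$ through \cite{Wit21} yields semiampleness of $L$. The hard part will be this last gluing step: semiampleness is not a purely fiberwise property, and the role of \cite{Wit21} is precisely to supply the cohomological machinery needed to produce a single uniform power of $L$ globally generated across the generic fiber and every closed fiber simultaneously.
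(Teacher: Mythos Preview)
Your approach is essentially the paper's: reduce via \cite[Theorem 1.2]{Wit21} to fiberwise semiampleness, then handle each positive-characteristic fiber using that Frobenius factors through any finite universal homeomorphism of $\mathbb{F}_p$-schemes (the paper cites \cite[Lemma 2.11(3)]{CT20} for this step rather than writing it out). Two minor corrections: the assertion that only finitely many residue characteristics occur is false in general (e.g.\ $S=\spec\Zz$) and is in any case unnecessary since \cite{Wit21} handles the uniformity across fibers; and \cite{Wit21} should be applied over the given base $S$---checking $L|_{X_s}$ for each $s\in S$---rather than over $\spec\Zz$, since $X$ is only assumed proper over $S$.
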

\begin{proof}
By \cite[Theorem 1.2]{Wit21}, it is enough to verify that $L|_{X_s}$ is semiample for any $s\in S$ whose residue field has positive characteristic.\par
Note that $f^*L|_{Y_s}$ is semiample and  the base change $f_s:Y_s\to X_s$ is a finite universal homeomorphism proper over a field with positive characteristic, we can deduce that $L|_{X_s}$ is semiample by \cite[Lemma 2.11(3)]{CT20}.  
\end{proof}

\section{Complements on surfaces}
\begin{prop}\label{prop: unique non-klt place for 6-complement}
Let $(X,B)$ be a two-dimensional klt pair admitting a projective birational map $f:X\to Z=\spec R$ such that $-(K_X+B)$ is relatively nef but not numerically trivial, where $R$ is as in Setting \ref{setting of R} and additionally has infinite residue field with characteristic $p>3$. Assume that there exists an effective divisor $D$ such that $(X,B+\epsilon D)$ is not globally $+$-regular over $Z$ for any $\epsilon>0$. \par
Then every 6-complement of $(X,B)$ is non-klt and has a unique non-klt valuation which is exceptional over $Z$.

\end{prop}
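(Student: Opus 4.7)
The plan is to argue by contrapositive: if a putative $6$-complement $(X,B^c)$ violates any of the stated properties, then $(X,B+\epsilon D)$ will be globally $+$-regular over $Z$ for some $0<\epsilon\ll 1$, contradicting the hypothesis. The two main ingredients are Lemma~\ref{lem: extract kollar component} (Kollár-component extraction) and Lemma~\ref{lem: 6-complement with an integral boundary} ($+$-regularity from a $6$-complement with an integral boundary).

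First, I would rule out non-exceptional non-klt places. In dimension two such a place corresponds to an irreducible curve $E\subseteq\lf B^c\rf$ that is not $f$-exceptional; since $Z=\Spec R$ is local, the closure of $f(E)$ contains the closed point and so $E$ meets $\Exc(f)$. Applying Lemma~\ref{lem: 6-complement with an integral boundary} with this $E$ immediately yields the contradiction.

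Next, to show that no $6$-complement is klt, I would apply Lemma~\ref{lem: extract kollar component} to $(X,B)$ to obtain models $g:Y\to X$, $h:Y\to W$, a Kollár component $C$, and a plt pair $(W,C_W+B_W)$ with $-(K_W+C_W+B_W)$ ample over $Z$. Assuming $(X,B^c)$ were klt, pulling it back to $Y$ and pushing down to $W$ produces a sub-lc pair $(W,aC_W+B^c_W)$ with $a<1$ and $6(K_W+aC_W+B^c_W)\sim_Z 0$. Fixing an effective exceptional anti-ample $\Qq$-divisor $T_W$ and $\lambda>0$ so that the coefficient of $C_W$ in $aC_W+\lambda T_W$ equals one, the Kollár--Shokurov connectedness theorem forces the enlarged pair to fail plt-ness along $C_W$. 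Restricting to $C$ produces $B'_C$ with $\lf B'_C\rf\ne 0$; since $T_W$ contributes anti-amply, $-(K_C+B'_C)$ is in fact ample, so the argument of Lemma~\ref{lem: 6-complement with an integral boundary} places us in its first subcase, in which $(C_{\bar k},B'_{C_{\bar k}})$ is purely F-regular. It follows that $(X,B+\epsilon D)$ is globally $+$-regular by the proof of \cite[Theorem~7.14]{BMP+20}, a contradiction.

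For uniqueness of the non-klt valuation, suppose $(X,B^c)$ had two distinct non-klt valuations---necessarily both exceptional over $Z$ by Step~1. Taking a $\Qq$-factorial dlt modification extracting both and running the restriction argument of Step~2 to the Kollár component $C\cong\Pp^1_{\bar k}$, the second non-klt place would contribute a second coefficient-one point to $B'_C$. Combined with the point forced by the Kollár--Shokurov adjustment, this gives $\deg B'_C\ge 2$ and hence $\deg(K_C+B'_C)\le 0$; together with the $\tfrac{1}{6}$-divisibility of the remaining coefficients this either produces a non-exceptional non-klt curve in an induced $6$-complement (contradicting Step~1) or places us in the ``trivial'' subcase of Lemma~\ref{lem: 6-complement with an integral boundary}, where $\gcd(p,6)=1$ forces $(C_{\bar k},B^c_{C_{\bar k}})$ to be globally F-split and hence $(X,B+\epsilon D)$ globally $+$-regular. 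The main obstacle throughout is the $p=5$ case, where at each stage the forced coefficient-one point on $C$ is exactly what excludes the exceptional configuration $\tfrac{1}{2}P_1+\tfrac{2}{3}P_2+\tfrac{4}{5}P_3$ of Remark~\ref{rem: (2,3,5) case in char 5}.
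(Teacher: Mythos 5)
Your proposal does not follow the paper's route and contains a genuine gap. The paper reduces everything to showing that the coefficient $a$ of $C_W$ in the crepant pull-back of $(X,B^c)$ to $W$ equals $1$. Once $a=1$, the restriction to $C\cong\Pp^1$ is forced (by Remark~\ref{rem: (2,3,5) case in char 5}) to be the unique $6$-complement $\frac{1}{2}P_1+\frac{2}{3}P_2+\frac{5}{6}P_3$ of $\frac{1}{2}P_1+\frac{2}{3}P_2+\frac{4}{5}P_3$, so $(W,C_W+B^c_W)$ is plt along $C_W$ by adjunction, and Koll\'ar--Shokurov connectedness then gives that it is plt \emph{everywhere}; non-kltness, uniqueness, and exceptionality all follow in one stroke. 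The heavy lifting is the proof that $a=1$, which rests entirely on the numerical gap Lemma~\ref{lem: gap is bigger than 1/30} applied to $(W,C_W+B^c_W)$ (and once more after contracting an extremal curve when $(B^c-B)\cdot C=0$). Your proposal never invokes that lemma.

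The concrete gap is in your Step~2. After inflating the coefficient of $C_W$ to $1$ with an exceptional anti-ample divisor $\lambda T_W$, you assert that the Koll\'ar--Shokurov connectedness theorem forces $(W,aC_W+\lambda T_W+B^c_W)$ to fail plt-ness along $C_W$. That is not what connectedness says: it only guarantees that the non-klt locus is connected, and an isolated $C_W$ is already connected. In Lemma~\ref{lem: 6-complement with an integral boundary} the non-exceptional curve $E_W$---which lies in the non-klt locus and meets the exceptional fiber---is precisely what provides the second component that connectedness must tie to $C_W$, producing the coefficient-one point on $C$. With no such curve available (and none can exist, by your own Step~1), there is nothing to connect $C_W$ to; the pair can perfectly well be plt along $C_W$ and the claimed $\lf B'_C\rf\ne0$ never materializes. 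Your Step~3 has a similar issue: a second exceptional non-klt valuation on a dlt modification need not meet $C$ at all, so it is not clear it contributes a second coefficient-one point to $B'_C$; the connection to $C$ supplied by connectedness could pass through codimension-two lc centers rather than a coefficient-one point of $C$.
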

\begin{proof}
By Lemma \ref{lem: extract kollar component}, there exist an irreducible, exceptional over $Z$, curve $C$ on a blow-up of $X$  and projective birational maps $g:Y\to X$ and $h:Y\to W$ over $Z$ such that
\begin{enumerate}
    \item $g$ extracts $C$ or is the identity if $C\subseteq X$,
    \item $(Y,C+B_Y)$ is plt,
    \item $(W,C_W+B_W)$ is plt and $-(K_W+C_W+B_W)$ is ample over $Z$,
\end{enumerate}
where $C_W:=h_*C\neq 0$, $B_W:=h_*B_Y$, and $K_Y+bC+B_Y=g^*(K_X+B)$ for $C\nsubseteq\Supp B_Y$.

By Remark \ref{rem: (2,3,5) case in char 5}, $(K_W+C_W+B_W)|_{C_W}=K_{C_W}+\frac{1}{2}P_1+\frac{2}{3}P_2+\frac{4}{5}P_3$ for some three distinct points $P_1,P_2$ and $P_3$.\par
\vspace{1em}
Now, let $(X,B^c)$ be any 6-complement of $(X,B)$. By the negativity lemma $\Supp(B^c-B)$ contains a non-exceptional curve. Let $K_Y+aC+B^c_Y=g^*(K_X+B^c)$, where $C\nsubseteq\Supp B^c_Y$, and let $B_W^c:=h_*B_Y^c$. Since $6(K_X+B^c)\sim_Z0$ is lc, we get that
$$
(W,aC_W+B^c_W) 
$$
is a sub-lc and $6(K_W+aC_W+B^c_W)\sim_Z0$. In particular $6B^c_W$ is an integral divisor. Moreover, $B_W^c\ge B_W$ as $B^c\ge B$.\par
\vspace{1em}
To prove the proposition it is now enough to show that $a=1$. Indeed, in this case $-(K_W+C_W+B^c_W)\sim_{\Qq,Z}0$ and by the Koll\'ar-Shokurov connectedness lemma, the non-klt locus of $(W,C_W+B^c_W)$ is connected. The only 6-complement of 
$$
(C_W,\frac{1}{2}P_1+\frac{2}{3}P_2+\frac{4}{5}P_3)
$$
is $(C_W,\frac{1}{2}P_1+\frac{2}{3}P_2+\frac{5}{6}P_3)$, so $(W,C_W+B_W^c)$ is plt along $C_W$ by adjunction, and the connectedness of non-klt locus implies that $(W,C_W+B^c_W)$ is in fact plt everywhere. In particular, $(X,B^c)$ admits a unique exceptional non-klt valuation over $Z$.\par
In order to prove the propositon, we assume that $a<1$ and derive a contradiction. We will not need to refer to $(X,B)$ or $(Y,aC+B_Y)$ any more, so, for ease of notation, we replace $C_W,B_W$ and $B_W^c$ by $C,B$ and $B^c$ respectively.\par
If $(B^c-B)\cdot C\neq0$, then Lemma \ref{lem: gap is bigger than 1/30} applied to $(W,C+B^c)$ implies that $(K_W+C+B^c)\cdot C=0$. This is impossible, because
$$
(K_W+C+B^c)\cdot C<(K_W+aC+B^c)\cdot C=0
$$
Hence, we can assume that $(B^c-B)\cdot C=0$. Since $\Supp(B^c-B)$ contains a non-exceptional curve, the exceptional locus over $Z$ cannot be irreducible, and so there exists an irreducible exceptional curve $E\neq C$ such that $E\cap C\neq\emptyset$. Since $K_W+C+B$ is anti-ample over $Z$ and $E$ is an extremal ray of $\overline{\text{NE}}(X/Z)$, we may contract $E$ over $Z$ by \cite[Theorem 4.4]{Tan18}. Let $f:W\to W_1$ be the contraction of $E$, and let $C_1,B^c_1$ be the strict transforms of $C$ and $B^c$. We have that 
$$
(K_W+C+B^c)\cdot E>(K_W+aC+B^c)\cdot E=0,
$$
and hence for some $t>0$ and with the natural identification $C\cong C_1$:
\begin{align*}
    (K_{W_1}+C_1+B^c_1)|_{C_1}&=f^*(K_{W_1}+C_1+B^c_1)|_C \\
                              &=(K_W+C+B^c+tE)|_C \\
                              &\le K_C+\frac{1}{2}P_1+\frac{2}{3}P_2+\frac{4}{5}P_3+tE|_C
\end{align*}
As before, $(K_{W_1}+C_1+B^c_1)\cdot C_1<(K_{W_1}+aC_1+B^c_1)\cdot C_1=0$. By applying Lemma \ref{lem: gap is bigger than 1/30} to $(W_1,C_1+B^c_1)$, we get a contradiction again.
\end{proof}

In the following result, it is key that $\Delta$ is non-zero.

\begin{lem}\label{lem: gap is bigger than 1/30}
Let $(S,C+B)$ be a two-dimensional log pair where $S$ is a normal excellent surface. Let $f:S\to T$ be a projective birational morphism such that the irreducible normal divisor $C$ is exceptional and $(K_S+C+B)\cdot C\le0$. Assume that $6B$ is an integral divisor and 
$$
B_C=\frac{1}{2}P_1+\frac{2}{3}P_2+\frac{4}{5}P_3+\Delta
$$
for distinct points $P_1,P_2,P_3\in C$ and a non-zero effective $\Qq$-divisor $\Delta$, where $(K_S+C+B)|_C=K_C+B_C$. Then $(K_S+C+B)\cdot C=0$.
\end{lem}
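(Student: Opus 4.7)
My plan is to combine adjunction on $C$ with a short enumeration of the admissible coefficients of the different.

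Set $k := H^0(C,\mathcal{O}_C)$. By adjunction, the hypothesis $(K_S+C+B)\cdot C\le 0$ translates to $\deg_k(K_C+B_C)\le 0$. Since $B_C$ is effective and nonzero, the arithmetic genus $p_a(C)$ cannot be positive, so $p_a(C)=0$ and $\deg_k K_C = -2$. Writing $d_i:=[k(P_i):k]\ge 1$, the hypothesis reads
\[
-2 \;+\; \tfrac{1}{2}d_1+\tfrac{2}{3}d_2+\tfrac{4}{5}d_3 \;+\; \deg_k\Delta \;\le\; 0.
\]
Checking each case $d_j\ge 2$ (for instance $d_1\ge 2$ already forces the left side to be $\ge \tfrac{7}{15}>0$ even before adding $\deg_k\Delta$, and similarly for $d_2$ or $d_3$) shows $d_1=d_2=d_3=1$, whence the inequality collapses to $\deg_k\Delta\le \tfrac{1}{30}$.

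The decisive step is to prove the matching lower bound $\deg_k\Delta\ge \tfrac{1}{30}$. Using that $6B$ is integral, the standard different formula on a normal surface shows that the coefficient of $B_C$ at any closed point $P\in C$ has the shape
\[
\frac{n-1+c}{n},\qquad n\in\mathbb{Z}_{\ge 1},\quad c\in\tfrac{1}{6}\mathbb{Z}_{\ge 0},
\]
where $n$ is the local index of $K_S+C$ at $P$. A short finite enumeration (it suffices to restrict to $1\le n\le 6$ and $c\in\{0,\tfrac{1}{6},\ldots,\tfrac{5}{6}\}$, since all other $(n,c)$ produce coefficients $\ge \tfrac{5}{6}$) shows that each of the open intervals $(\tfrac{1}{2},\tfrac{7}{12})$, $(\tfrac{2}{3},\tfrac{13}{18})$ and $(\tfrac{4}{5},\tfrac{5}{6})$ contains no admissible coefficient, and that the smallest positive admissible coefficient is $\tfrac{1}{6}$. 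Hence any nonzero contribution to $\Delta$ — either the appearance of a new closed point $Q\ne P_i$ with coefficient $\ge \tfrac{1}{6}$, or an excess at some $P_i$ above the standard coefficient of size at least $\tfrac{1}{12}$, $\tfrac{1}{18}$ or $\tfrac{1}{30}$ — contributes at least $\tfrac{1}{30}$ to $\deg_k\Delta$.

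Combining the two bounds forces $\deg_k\Delta = \tfrac{1}{30}$, and hence $(K_S+C+B)\cdot C = 0$. The technical obstacle is the gap enumeration itself; in particular one must handle the tail $c\ge 1$ (where the coefficient is $\ge 1$, safely outside all three gap intervals) and make sure the three gap endpoints $7/12$, $13/18$, $5/6$ are correctly identified rather than narrowly missed. Note that the extremal equality case $\Delta=\tfrac{1}{30}P_3$ — i.e., bumping the coefficient at $P_3$ from $\tfrac{4}{5}$ to $\tfrac{5}{6}$ — is consistent with the lemma asserting equality rather than strict inequality.
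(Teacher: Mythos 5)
Your proof is correct and follows essentially the same route as the paper, which simply invokes \cite[Lemma 3.2]{HW19b} together with the classification of plt singularities for excellent surfaces (\cite[Corollary 3.45, 3.33, 3.35, 3.36]{Kol13}) --- i.e.\ exactly your adjunction-plus-gap enumeration of the admissible coefficients $\tfrac{n-1}{n}+\tfrac{c}{n}$, $c\in\tfrac16\Zz_{\ge0}$, of the different. The only point to make explicit is that this coefficient formula requires $(S,C)$ to be plt at the point in question; at a non-plt point the coefficient of $B_C$ is $\ge 1$, which lies outside all three gap intervals and is $\ge\tfrac16$, so your lower bound $\deg_k\Delta\ge\tfrac1{30}$ is unaffected.
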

\begin{proof}
This follows by exactly the same proof of \cite[Lemma 3.2]{HW19b}. Notice that it only uses the classification of plt singularities for excellent surfaces (see e.g. \cite[Corollary 3.45, 3.33, 3.35 and 3.36]{Kol13}).
\end{proof}

\section{Lifting complements}
The main theorem we want to show in this section is:
\begin{prop}\label{prop: m-complement for (X,S+B)}
Let $(X,S+B)$ be a 3-dimensional $\Qq$-factorial plt pair with standard coefficients, and let $f:X\to Z=\spec R$ be a flipping contraction such that $-(K_X+S+B)$ and $-S$ are $f$-ample. Here the ring $R$ has residual characteristic $p>2$. \par
Then there exists an m-complement $(X,S+B^c)$ of $(X,S+B)$ in a neighborhood of $\Exc f$ for some $m\in\{1,2,3,4,6\}$.
\end{prop}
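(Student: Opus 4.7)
The plan is to follow the strategy of \cite[Proposition 4.1]{HW19b}, which proceeds by finding a complement on the plt centre $S$ and then lifting it to $X$. The two ingredients that must be upgraded to the mixed characteristic setting are (a) Shokurov's theory of complements for excellent surfaces and (b) a lifting-of-sections statement; for (b) we will use the vanishing and $+$-regularity techniques from \cite{BMP+20} and \cite{TY20} in place of Kawamata--Viehweg vanishing.

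First, since $(X,S+B)$ is plt, the divisor $S$ is normal up to a universal homeomorphism, so we may form the normalisation $\nu\colon S^n\to S$ and define the different $B_{S^n}$ by $K_{S^n}+B_{S^n}=\nu^*((K_X+S+B)|_S)$. Standard properties of adjunction (together with $B$ having standard coefficients) show that $B_{S^n}$ has standard coefficients, and the induced contraction $\bar f\colon S^n\to \bar f(S^n)\subseteq Z$ is projective birational with $-(K_{S^n}+B_{S^n})$ relatively ample, because $-(K_X+S+B)$ is $f$-ample. Here we also use that $-S$ is $f$-ample so that $S$ maps to a point (or a curve) in $Z$ and the relevant restriction actually lands in the setting of Section~3.

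Second, I would invoke Shokurov-type complement theory for excellent surfaces to produce an $m$-complement $(S^n,B^c_{S^n})$ of $(S^n,B_{S^n})$ with $m\in\{1,2,3,4,6\}$. In the ``generic'' case, the relevant input is Theorem~7.14 of \cite{BMP+20} (plus Lemma~\ref{lem: 6-complement with an integral boundary}), which gives that the appropriate perturbation of $(S^n,B_{S^n})$ is globally $+$-regular; the delicate case is the $(2,3,5)$-coefficient configuration of Remark \ref{rem: (2,3,5) case in char 5}, which is precisely where Proposition \ref{prop: unique non-klt place for 6-complement} intervenes to guarantee that the unique $6$-complement exists and has a unique non-klt valuation, exceptional over $Z$.

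Third, the complement $B^c_{S^n}$ must be lifted to an effective divisor $B^c$ on $X$. After choosing $m$ so that $-m(K_X+S+B)$ is Cartier in a neighbourhood of $\Exc(f)$, I would use the short exact sequence
\[
0\to \Oo_X\bigl(-m(K_X+S+B)-S\bigr)\to \Oo_X\bigl(-m(K_X+S+B)\bigr)\to \Oo_{S^n}\bigl(-m(K_{S^n}+B_{S^n})\bigr)\to 0
\]
(modulo correcting by the universal homeomorphism $S^n\to S$, which is harmless after shrinking and using Lemma \ref{lem: semiample can be checked up to uni. homeo. in mixed char}-type arguments), and reduce the lifting to the vanishing of $R^1f_*$ of the kernel. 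Since $-S$ is $f$-ample and $-(K_X+S+B)$ is $f$-ample, the $+$-regularity machinery of \cite{BMP+20,TY20} provides the required surjection of pushforwards after completing at a point of $Z$, via a trace-splitting argument of the Frobenius (or the Riemann--Hilbert/perfectoid) twist. In the exceptional $(2,3,5)$-case we instead lift using the rigidity provided by Proposition \ref{prop: unique non-klt place for 6-complement}: the unique non-klt valuation pins down $B^c$ uniquely in a neighbourhood of $\Exc(f)$, and the lift can then be constructed by extracting this valuation and using Lemma \ref{lem: extract kollar component}.

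The main obstacle is the final lifting step in residual characteristic $5$ under the $(2,3,5)$-configuration: this is exactly the content that extends \cite{BMP+20} and where the new input of Proposition \ref{prop: unique non-klt place for 6-complement} is essential. Everything else is a fairly routine transcription of the proof in \cite{HW19b}, with the characteristic-$p$ lifting replaced by the mixed-characteristic $+$-regularity tools.
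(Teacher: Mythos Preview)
Your high-level plan (construct an $m$-complement on $S^n$, then lift to $X$) matches the paper, but the implementation has real gaps. First, the lifting step cannot proceed via the short exact sequence you write: for $m\in\{1,2,3,4,6\}$ the divisor $-m(K_X+S+B)$ is typically not Cartier when $B$ has standard coefficients $1-\tfrac{1}{n}$ with $n\nmid m$, so the sequence does not even make sense as stated, and in any case Kawamata--Viehweg-type $R^1$-vanishing is unavailable in mixed characteristic. The paper sidesteps this entirely by working with $L=\lfloor mA\rfloor$ and $\Phi=S+\{(m+1)B\}$ and invoking the $\mathbf{B}^0$ adjoint-section formalism: Lemma~\ref{lem: restriction is surjective} (which rests on \cite[Theorem~7.2]{BMP+20}) gives a surjection $\mathbf{B}^0_{S}(X,\Phi;\lfloor mA\rfloor)\twoheadrightarrow\mathbf{B}^0(S^n,\Phi_{S^n};\lfloor mA_{S^n}\rfloor)$ directly, with no case split and no $R^1$-vanishing hypothesis. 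The $(2,3,5)$ configuration creates no additional obstacle to lifting.

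Second, you have misplaced Proposition~\ref{prop: unique non-klt place for 6-complement}: it plays no role in the proof of Proposition~\ref{prop: m-complement for (X,S+B)}. The surface complement is produced in Lemma~\ref{lem: m-complement for surface} by extracting the Koll\'ar component $C$, showing that $(C_{\bar k},(\Phi_C)_{\bar k})$ is globally F-regular (whence $(C,\Phi_C)$ is globally $+$-regular and $\mathbf{B}^0(C,\Phi_C;L_C)=H^0(C,L_C)$), and then lifting the curve complement back through $W\to Y\to X$ using the $\mathbf{B}^0$ comparison Lemmas~\ref{lem: B^0 in higher model} and~\ref{lem: B^0 get smaller when boundary gets bigger}; the crucial point is that the surface complement so constructed lies in $\mathbf{B}^0(S^n,\Phi_{S^n};L_{S^n})$, which is exactly what the surjection above lifts. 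Proposition~\ref{prop: unique non-klt place for 6-complement} is only used later, in the proof of Theorem~\ref{thm: existence of flip for klt pair with standard coef}, to analyse the geometry of the complement once it exists; it does not help build or lift it.
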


Since standard coefficients are not stable under log pull-backs, we need to work in a more general setting.

\begin{set}\label{set:stable sections}
Fix a natural number $m\in\Nn$. Let $(X,S+B)$ be a sub-log pair projective over $Z=\spec R$ where $R$ is as in the Setting \ref{setting of R}, such that $S$ is a (possibly empty) reduced Weil divisor, $\lf B\rf\le0$, and $A:=-(K_X+S+B)$ is semi-ample and big.

We are ready to define:
\begin{align*}
    \Phi&:=S+\{(m+1)B\},\\
    D&:=\lceil mB\rceil-\lf(m+1)B\rf, and \\
    L&:=\lf mA\rf+D.
\end{align*}

\end{set}
Notice that $L-(K_X+\Phi)=(m+1)A$ is semi-ample and big and $D=0$ if $B+S$ has standard coefficients.

\begin{lem}\label{lem: B^0 in higher model}
With notation as in Setting \ref{set:stable sections}, suppose that $(X,S+B)$ is plt and $D=0$. Let $\pi:Y\to X$ be a projective birational map and set $K_Y+S_Y+B_Y=\pi^*(K_X+S+B)$ with $S_Y=\pi^{-1}_*S$. Then,
$$
\mathbf{B}^0_{S_Y}(Y,\Phi_Y;L_Y)=\mathbf{B}^0_{S}(X,\Phi;L),
$$
where $L_Y$ and $\Phi_Y$ is defined for $(Y,S_Y+B_Y)$ as in Setting \ref{set:stable sections}.
\end{lem}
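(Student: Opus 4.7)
The plan is to compare the two spaces of stable sections via the birational morphism $\pi$, reducing to a sheaf-level pushforward identity combined with the functoriality of the $+$-stable sections construction of \cite{BMP+20}. Both spaces are subspaces of sections living, after restriction to $S$ (resp.\ $S_Y$), inside the function field of $S$; since $\pi$ induces a birational map $S_Y \to S$, the task is to show that the two stable subspaces agree.

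First, I would write out how $L$, $D$, $\Phi$ transform under $\pi$. Since $A_Y = \pi^\ast A$, one has
\[ \lfloor mA_Y\rfloor - \pi^\ast\lfloor mA\rfloor \;=\; \pi^\ast\{mA\} - \{mA_Y\}, \]
a $\pi$-exceptional $\Qq$-divisor whose fractional structure is controlled entirely by $B_Y$ through $A=-(K_X+S+B)$. Writing $B_Y = \pi_\ast^{-1}B + \sum_i b_i E_i$ with $E_i$ the $\pi$-exceptional divisors, plt-ness of $(X,S+B)$ forces $b_i < 1$ for every $E_i \neq S_Y$, while $D = 0$ on $X$ (equivalently, $\lceil mB\rceil = \lfloor(m+1)B\rfloor$ along each component of $B$) means that no rounding contribution is generated along $\pi_\ast^{-1}B$. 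The exceptional corrections thus all come from the coefficients $b_i$, and $D_Y$ contributes $\lceil mb_i\rceil - \lfloor(m+1)b_i\rfloor$ along each $E_i$.

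Second, I would verify the sheaf identity
\[ \pi_\ast\mathcal{O}_Y\bigl(L_Y - S_Y - \lfloor\Phi_Y - S_Y\rfloor\bigr) \;=\; \mathcal{O}_X\bigl(L - S - \lfloor\Phi - S\rfloor\bigr), \]
by checking that the difference $(L_Y - S_Y - \lfloor\Phi_Y - S_Y\rfloor) - \pi^\ast(L - S - \lfloor\Phi - S\rfloor)$ is a $\pi$-exceptional $\Qq$-divisor with round-down equal to zero, so that the projection formula together with $\pi_\ast\mathcal{O}_Y = \mathcal{O}_X$ gives the equality. The needed arithmetic reduces to an elementary identity for each $b_i \in (-1,1)$ combining the three contributions $\lfloor m b_i\rfloor$ (from $\lfloor mA_Y\rfloor$, via $A_Y = -(K_Y+S_Y+B_Y)$), $\lceil m b_i\rceil - \lfloor(m+1)b_i\rfloor$ (from $D_Y$), and $\{(m+1)b_i\}$ (from $\Phi_Y$). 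The upper bound $b_i < 1$ guaranteed by plt-ness is exactly what makes the sum lie in $[0,1)$.

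Third, once this sheaf identity is established, I would invoke the functoriality of the $+$-stable (resp.\ Frobenius-stable) sections construction recalled earlier from \cite{BMP+20}: the defining directed system (trace maps from $p^e$-twisted pullbacks, then restriction to $S$) on $(X,\Phi;L)$ is pulled back compatibly to the analogous system on $(Y,\Phi_Y;L_Y)$, and the identification of the ambient sheaves on $S$ and $S_Y$ along $\pi|_{S_Y}$ gives $\mathbf{B}^0_{S_Y}(Y,\Phi_Y;L_Y) = \mathbf{B}^0_{S}(X,\Phi;L)$ as subspaces of the common ambient space.

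The main obstacle will be the rounding bookkeeping in Step 2: the discrepancy coefficients $b_i$ may be negative, and one must verify on a case split (between exceptional divisors with $b_i \ge 0$ and those with $b_i < 0$) that the three rounding contributions above combine to an effective $\Qq$-divisor with vanishing round-down. The hypotheses that $(X,S+B)$ is plt (so $b_i \in (-1,1)$ on every non-$S_Y$ exceptional divisor) and that $D = 0$ on $X$ (so there is no extra rounding defect coming from $\pi_\ast^{-1}B$) are used in a sharp way at precisely this step; without either hypothesis the cancellation fails. Step 3 is then formal from the standard projection-formula-plus-trace-map package.
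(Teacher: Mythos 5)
Your computational core (Steps 1--2) points in the right direction --- the hypotheses plt and $D=0$ do enter through a rounding/effectivity computation --- but the identity you propose to prove and the mechanism by which you conclude in Step 3 rest on a misreading of what $\mathbf{B}^0_S$ is. In \cite{BMP+20}, $\mathbf{B}^0_S(X,\Phi;L)$ is a subspace of $H^0(X,\mathcal{O}_X(L))$: it is the intersection, over alterations (or finite covers) $h\colon W\to X$, of the images of the trace maps $H^0(W,\mathcal{O}_W(K_W+S_W+\lceil h^*(L-K_X-\Phi)\rceil))\to H^0(X,\mathcal{O}_X(L))$; the subscript $S$ enters through the twist by $S_W$ upstairs, not through restriction to $S$, and the system is indexed by alterations, not by Frobenius ($p^e$-twisted) pullbacks. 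Consequently your framing that both sides ``live, after restriction to $S$, inside the function field of $S$'' and your conclusion via ``identification of the ambient sheaves on $S$ and $S_Y$'' do not prove the statement: equality of images under restriction to $S$ would not even determine the two subspaces (sections vanishing along $S$ are lost), and the ambient spaces that actually must be identified are $H^0(Y,\mathcal{O}_Y(L_Y))$ and $H^0(X,\mathcal{O}_X(L))$. Relatedly, your Step 2 identity concerns $L_Y-S_Y-\lfloor\Phi_Y-S_Y\rfloor$ (i.e.\ $L_Y-S_Y$, since both round-downs vanish), which is not the identification needed; what is needed is $\pi_*\mathcal{O}_Y(L_Y)=\mathcal{O}_X(L)$, and it is there that plt-ness (exceptional coefficients of $B_Y$ are $<1$, so the exceptional rounding correction $D_Y$ is effective) and $D=0$ (so $D_Y$ is $\pi$-exceptional and $\pi_*L_Y=L$) are used.

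You are also missing the one observation that makes the lemma almost immediate, and which is the heart of the paper's proof: $L_Y-(K_Y+\Phi_Y)=\pi^*(L-(K_X+\Phi))=(m+1)\pi^*A$ holds exactly, with no rounding defect. Hence for any alteration $g\colon W\to Y$, setting $h=\pi\circ g$, the twisted source spaces $H^0(W,\mathcal{O}_W(K_W+S_W+\lceil g^*(L_Y-K_Y-\Phi_Y)\rceil))$ and $H^0(W,\mathcal{O}_W(K_W+S_W+\lceil h^*(L-K_X-\Phi)\rceil))$ are literally equal, and the two trace maps fit into a commutative square over the isomorphism $H^0(Y,\mathcal{O}_Y(L_Y))\cong H^0(X,\mathcal{O}_X(L))$; since alterations factoring through $Y$ are cofinal among alterations of $X$, the two intersections of images coincide. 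If you replace your Step 2 identity by $\pi_*\mathcal{O}_Y(L_Y)=\mathcal{O}_X(L)$ and your Step 3 by this alteration-by-alteration comparison (plus cofinality), your plan becomes the paper's proof.
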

\begin{proof}
For any alteration $g:W\to Y$ such that $W$ is normal, let $S_W$ be a strict transform of $S_Y$. We have the following commutative diagram:
\begin{center}
\begin{tikzcd}
H^0(W,K_W+S_W+\lceil g^*( L_Y-K_Y-\Phi_Y)\rceil) \arrow[r] \arrow[d, "\phi"]
& H^0(Y,L_Y) \arrow[d, "\psi"] \\
H^0(W,K_W+S_W+\lceil h^*(L-K_X-\Phi)\rceil) \arrow[r]
& H^0(X,L)
\end{tikzcd}
\end{center}
where $h=\pi\circ g$ and the horizontal maps are trace maps. Since $g^*( L_Y-K_Y-\Phi_Y)=h^*(L-K_X-\Phi)=h^*((m+1)A)$, we see that $\phi$ is actually the identity. Since $\pi_*L_Y=L$ and $L_Y\ge \pi^*L+D_Y$, $\psi$ is an isomorphism. 
\end{proof}

The following lemma allows for lifting sections.

\begin{lem}\label{lem: restriction is surjective}
With notation as in Setting \ref{set:stable sections}, suppose that $(X,S+B)$ is plt with standard coefficients, $S$ is an irreducible divisor, and $A:=-(K_X+S+B)$ is ample. Write $A_{S^n}:=-(K_{S^n}+B_{S^n})=-(K_X+S+B)|_{S^n}$ for the normalisation $S^n$ of $S$. Then by restricting sections we get a surjection
$$
\mathbf{B}^0_{S}(X,\Phi;\lf mA\rf)\to\mathbf{B}^0(S^n,\Phi_{S^n};\lf mA_{S^n}\rf).
$$
\end{lem}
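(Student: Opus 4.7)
The plan is to follow the standard extension argument (cf.\ \cite[Lemma~2.16]{HW19b}): pass to a log resolution, use the short exact sequence associated to $S_Y\hookrightarrow Y$, and reduce surjectivity to an $H^1$-vanishing, which in mixed characteristic must be supplied by the alteration-based $\mathbf{B}^0$ formalism of \cite{BMP+20} rather than by Kawamata--Viehweg vanishing.

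Since $B+S$ has standard coefficients, the remark following Setting~\ref{set:stable sections} gives $D=0$, so Lemma~\ref{lem: B^0 in higher model} applies to any log resolution $\pi\colon Y\to X$ of $(X,S+B)$ with $S_Y:=\pi^{-1}_*S$, giving
$$\mathbf{B}^0_{S_Y}(Y,\Phi_Y;L_Y)=\mathbf{B}^0_{S}(X,\Phi;L).$$
By plt adjunction $(S^n,B_{S^n})$ is klt with standard coefficients, so applying Lemma~\ref{lem: B^0 in higher model} to the induced birational morphism $S_Y\to S^n$ of normal surfaces identifies $\mathbf{B}^0(S_Y,\Phi_{S_Y};L_Y|_{S_Y})$ with $\mathbf{B}^0(S^n,\Phi_{S^n};\lfloor mA_{S^n}\rfloor)$. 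Hence it suffices to prove that the restriction
$$\mathbf{B}^0_{S_Y}(Y,\Phi_Y;L_Y)\longrightarrow \mathbf{B}^0(S_Y,\Phi_{S_Y};L_Y|_{S_Y})$$
is surjective on the log smooth model $Y$.

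For any alteration $g\colon W\to Y$ appearing in the definition of $\mathbf{B}^0$, set $S_W:=g^{-1}_*S_Y$ and $N_W:=\lceil g^*(L_Y-K_Y-\Phi_Y)\rceil$, noting that $L_Y-K_Y-\Phi_Y\sim_{\Qq}(m+1)A_Y$ is the pullback of a semi-ample and big divisor. The short exact sequence
$$0\to\Oo_W(K_W+N_W)\to\Oo_W(K_W+S_W+N_W)\to\Oo_{S_W}(K_{S_W}+N_W|_{S_W})\to 0$$
together with the compatibility of the trace maps to $Y$ and $S_Y$ assembles into a commutative diagram, and the cokernel of the top restriction is controlled by $H^1(W,K_W+N_W)$. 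An alteration-level lift of each $\sigma\in\mathbf{B}^0(S_Y,\Phi_{S_Y};L_Y|_{S_Y})$ would then yield the desired surjection upon intersecting over all alterations.

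The main obstacle will be this alteration-level lift, i.e.\ killing the obstruction class in $H^1(W,K_W+N_W)$ after a further finite cover. In characteristic zero this is immediate from Kawamata--Viehweg vanishing applied to $(m+1)A_Y$, but in mixed characteristic no such vanishing holds on a single $W$. The plan is to invoke the Bhatt/perfectoid cohomology vanishing on absolute integral closures that underlies the $\mathbf{B}^0$ formalism of \cite{BMP+20}: given $\sigma$, one passes to a sufficiently deep alteration $W'\to W$ over which the image of the obstruction class dies, mirroring the PLT extension theorem of \cite{BMP+20}. The plt hypothesis on $(X,S+B)$ is crucial here so that $S$ and its strict transforms under alterations behave like a well-controlled prime divisor with respect to adjunction, and it is the only nontrivial input; once the alteration-level lift is secured, the diagram chase over alterations produces the claimed surjection of $\mathbf{B}^0$-spaces.
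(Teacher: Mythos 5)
Your overall structure matches the paper's: take a log resolution $\pi\colon Y\to X$ of $(X,S+B)$, use Lemma~\ref{lem: B^0 in higher model} on both sides (once for $Y\to X$, once for $S_Y\to S^n$, which is legitimate since Setting~\ref{set:stable sections} allows $S=\emptyset$), and reduce to proving surjectivity of
$$
\mathbf{B}^0_{S_Y}(Y,\Phi_Y;L_Y)\longrightarrow \mathbf{B}^0(S_Y,\Phi_{S_Y};L_{S_Y})
$$
on the log-smooth model. Where you diverge is in how you handle this surjectivity. The paper notes that $L_Y-(K_Y+\Phi_Y)=-(m+1)\pi^*(K_X+S+B)$ is big and semi-ample and then cites \cite[Theorem~7.2]{BMP+20}, which is precisely the lifting statement you are describing. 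You instead sketch a re-derivation of that theorem from scratch via alterations, a short exact sequence, and an $H^1$-obstruction killed on a deeper cover. The sketch is pointing in the right direction but it is genuinely incomplete — you say the "plan is to invoke" the perfectoid/alteration vanishing rather than carrying it out, and carrying it out is the content of a whole theorem in \cite{BMP+20}. Since the needed result is already available as a citable statement, you should cite it rather than re-prove it.

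One small point you glide past that does need to be said: before applying Lemma~\ref{lem: B^0 in higher model} on the surface side with the data prescribed by Setting~\ref{set:stable sections} for $(S_Y,B_{S_Y})$, one must check that the restrictions match, namely $(K_Y+\Phi_Y)|_{S_Y}=K_{S_Y}+\Phi_{S_Y}$ and $L_Y|_{S_Y}=L_{S_Y}$. You write $\mathbf{B}^0(S_Y,\Phi_{S_Y};L_Y|_{S_Y})$ and then treat it as $\mathbf{B}^0$ of the surface pair as defined in Setting~\ref{set:stable sections}; that identification is exactly what the equalities above record, and they use that $D=0$ (standard coefficients). The paper makes this verification explicit; you should too, as it is what makes Lemma~\ref{lem: B^0 in higher model} applicable on the surface side with the correct $L$ and $\Phi$.
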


\begin{proof}
Let $\pi:Y\to X$ be a log resolution of $(S+B)$. We can write 
\begin{align*}
    K_Y+S_Y+B_Y&=\pi^*(K_X+S+B),\mathrm{\ and}\\
    K_{S_Y}+B_{S_Y}&=(K_Y+S_Y+B_Y)|_{S_Y}
\end{align*}
for $S_Y=\pi_*^{-1}S$. Define $L_Y,L_{S_Y},\Phi_Y,\Phi_{S_Y}$ as in Setting \ref{set:stable sections}. Then we have $(K_Y+\Phi_Y)|_{S_Y}=K_{S_Y}+\Phi_{S_Y}$ and $L_Y|_{S_Y}=L_{S_Y}$. 

Since $L_Y-(K_Y+\Phi_Y)=-(m+1)\pi^*(K_X+S+B)$ is big and semi-ample, restricting sections induces a surjective map 
$$
\mathbf{B}^0_{S_Y}(Y,\Phi_Y;L_Y)\to\mathbf{B}^0(S_Y,\Phi_{S_Y};L_{S_Y})
$$
by \cite[Theorem 7.2]{BMP+20}. Thus the claim follows from Lemma \ref{lem: B^0 in higher model} applied to both sides.
\end{proof}

Finally, we show that $B^0_S$ gets smaller when the boundary getts bigger.

\begin{lem}\label{lem: B^0 get smaller when boundary gets bigger}
Let $(X,S+B)$ and $(X,S'+B')$ be two sub-log pairs satisfying the assumptions of Setting \ref{set:stable sections}. Suppose that $S'+B'\ge S+B$ and define $\Phi,L$ and $\Phi',L'$ for $(X,S+B)$ and $(X,S'+B')$, respectively, as in Setting \ref{set:stable sections}.

Then $L-L'\ge0$ and the inclusion $H^0(X,L')\subseteq H^0(X,L)$ induces an inclusion
$$
\mathbf{B}^0_{S'}(X,\Phi';L')\subseteq\mathbf{B}^0_S(X,\Phi;L),
$$
\end{lem}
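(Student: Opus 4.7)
The proof proceeds in two stages: first verifying the effectivity $L - L' \geq 0$ on $X$, and then transporting the resulting inclusion of global sections through the definition of $\mathbf{B}^0_S$ via a divisor inequality on an alteration $h : W \to X$.

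For the first stage, unpacking the definitions in Setting \ref{set:stable sections} with $A = -(K_X + S + B)$ gives $\lfloor mA \rfloor = -mK_X - mS - \lceil mB \rceil$, and hence the compact identity
\[
L \;=\; -mK_X - mS - \lfloor (m+1) B \rfloor
\]
(and similarly for $L'$), so that
\[
L - L' \;=\; m(S' - S) \;+\; \lfloor (m+1) B' \rfloor \;-\; \lfloor (m+1) B \rfloor.
\]
I would verify effectivity componentwise using $S' + B' \geq S + B$, $\lfloor B \rfloor \leq 0$, $\lfloor B' \rfloor \leq 0$, and the convention, implicit in Setting \ref{set:stable sections} and needed for $\Phi$ to be an honest boundary, that $S$ and $B$ share no common components (and likewise for $S', B'$). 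This yields both $S' \geq S$ and $L - L' \geq 0$, so that $H^0(X, L') \hookrightarrow H^0(X, L)$ is realized as multiplication by the canonical section $s_{L-L'}$ of $\mathcal{O}_X(L - L')$.

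For the second stage, fix $\sigma' \in \mathbf{B}^0_{S'}(X, \Phi'; L')$ and an alteration $h : W \to X$. Using $L' - K_X - \Phi' = (m+1) A'$, pick a lift $\tilde\sigma' \in H^0\bigl(W,\, K_W + S'_W + \lceil h^*((m+1) A') \rceil\bigr)$ whose trace is $\sigma'$. Then $\tau := h^*(s_{L - L'}) \cdot \tilde\sigma'$ is a section of $K_W + S'_W + \lceil h^*((m+1) A') \rceil + h^*(L - L')$, and its trace is $s_{L - L'} \cdot \sigma'$ by the $\mathcal{O}_X$-linearity of the Grothendieck trace on pulled-back sections. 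It therefore suffices to embed $\tau$ into $H^0\bigl(W,\, K_W + S_W + \lceil h^*((m+1) A) \rceil\bigr)$, i.e.\ to establish
\[
(S'_W - S_W) + h^*(L - L') + \lceil h^*((m+1) A') \rceil \;\leq\; \lceil h^*((m+1) A) \rceil.
\]

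The main obstacle is this last divisor inequality on $W$. Setting $E := (S' + B') - (S + B) \geq 0$ so that $A = A' + E$, the elementary componentwise bound $\lceil \alpha + \beta \rceil \geq \lceil \alpha \rceil + \lfloor \beta \rfloor$ applied to the two ceilings reduces the inequality to
\[
(S'_W - S_W) + h^*(L - L') \;\leq\; \lfloor h^*((m+1) E) \rfloor.
\]
I would verify this prime by prime on $W$. The key inputs are that the strict transform difference $S'_W - S_W$ is dominated by the full pullback $h^*(S' - S)$, and that the factor $(m+1) \geq 2$ in $h^*((m+1) E)$ supplies the slack needed to absorb both this strict-transform contribution and the rounding discrepancy between $\lfloor (m+1) B' \rfloor - \lfloor (m+1) B \rfloor$ and $(m+1)(B' - B)$. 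The disjoint-support convention from Step 1 is what closes the prime-by-prime bookkeeping.
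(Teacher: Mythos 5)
The computation of $L-L'$ and the conclusion $L-L'\geq 0$ are fine and match the paper. However, the rest of your argument has a genuine gap: the divisor inequality you reduce to, namely
$$
(S'_W - S_W) + h^*(L-L') \;\leq\; \lfloor h^*((m+1)E) \rfloor,
$$
is simply false for a general alteration $h$. Consider a prime $P$ not contained in $S\cup S'$ with $\mult_P B = 0.4$ and $\mult_P B' = 0.5$, and take $m=1$. Then $L-L'$ has coefficient $\lfloor 2\cdot 0.5\rfloor-\lfloor 2\cdot 0.4\rfloor = 1$ at $P$, while $(m+1)E = 2E$ has coefficient $0.2$ at $P$. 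Already on $X$ (i.e.\ $h=\mathrm{id}$) the inequality reads $1\leq 0$, and for any $h$ ramified to order $r$ over $P$ it reads $r\leq\lfloor 0.2\,r\rfloor$, which fails for all $r$. Thus the ``$(m+1)\geq 2$ supplies the slack'' reasoning does not work: the rounding loss at a single prime is of order $1$ per unit ramification, while $(m+1)E$ may have arbitrarily small coefficient there, so multiplying by $m+1$ cannot compensate. In fact even your less-reduced inequality $(S'_W-S_W)+h^*(L-L')+\lceil h^*((m+1)A')\rceil\leq\lceil h^*((m+1)A)\rceil$ fails in the same example once $r\geq 2$, so the issue is not merely a sloppy use of $\lceil\alpha+\beta\rceil\geq\lceil\alpha\rceil+\lfloor\beta\rfloor$ but the entire strategy of tracking ceilings over an arbitrary alteration.

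What the paper does instead is decisive here and you omit it: it passes to a \emph{sufficiently large finite cover} $f\colon W\to X$ on which the pullbacks $f^*\bigl(-(m+1)(K_X+S+B)\bigr)$ and $f^*\bigl(-(m+1)(K_X+S'+B')\bigr)$ are integral, so no rounding is needed. Then the required comparison is the clean $\Qq$-divisor inequality
$$
S_W + f^*\bigl(-(m+1)(K_X+S+B)\bigr)\;\geq\; S'_W + f^*\bigl(-(m+1)(K_X+S'+B')\bigr),
$$
equivalently $S'_W-S_W\leq f^*\bigl((m+1)(S'+B'-S-B)\bigr)$, which holds because $S'-S$ is contained in $\Supp(S'+B'-S-B)\geq 0$ and the pullback is integral, so each component of $S'_W-S_W$ receives a coefficient that is a \emph{positive integer} (hence $\geq 1$) on the right. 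The conclusion then follows from the characterisation of $\mathbf{B}^0_S$ via such covers (\cite[Lemma 4.24]{BMP+20}). To repair your argument you would need to invoke precisely this: that $\mathbf{B}^0_S$ may be computed after replacing an arbitrary alteration by a more divisible refinement, and then argue without ceilings.
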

\begin{proof}
First we have
\begin{align*}
    L-L'&=\Phi-\Phi'+(m+1)(S'+B'-S-B)\\
        &=S-S'+\lf(m+1)(S'+B')\rf-\lf(m+1)(S+B)\rf,
\end{align*}
and so $L-L'\ge0$. \par

Note that $S'+B'\ge S+B$ implies $S'\ge S$ and $S'-S\subseteq\Supp(S'+B'-S-B)$. Thus for a sufficiently large finite cover $f:W\to X$, denoting by $S_W$ and $S'_W$ the strict transforms of $S$ and $S'$ such that $S_W\le S'_W$, we have $S_W+f^*(-(m+1)(K_X+S+B))\ge S'_W+f^*(-(m+1)(K_X+S'+B'))$, which is equivalent to $S'_W-S_W\ge f^*((m+1)(S'+B'-S-B))$. Then the statement follows by the definition of $\mathbf{B}^0_S$ (see \cite[Lemma 4.24]{BMP+20}) since $f^*(L-K_X-\Phi)=f^*(-(m+1)(K_X+S+B))$.
\end{proof}

We need the following lemma for the proof of Proposition \ref{prop: m-complement for (X,S+B)}.

\begin{lem}\label{lem: m-complement for surface}
Let $(X,B)$ be a two-dimensional klt pair with standard coefficients admitting a projective birational map $f:X\to Z=\spec R$ such that $-(K_X+B)$ is relatively ample, assuming $R$ is as in Setting \ref{setting of R} and additionally has infinite residue field. Then there exists $m\in\{1,2,3,4,6\}$ and
$$
s\in\mathbf{B}^0(X,\Phi;L)\subseteq H^0(X, L)
$$ 
such that $(X,\frac{1}{m}\lceil mB\rceil+\frac{1}{m}\Gamma)$ is an m-complement of $(X,B)$ in a neighborhood of $\Exc f$ where $\Gamma$ is the divisor corresponding to $s$, and $L$ and $\Phi$ are defined as in Setting \ref{set:stable sections}.
\end{lem}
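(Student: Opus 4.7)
The plan is to construct the complement by producing one on a Koll\'ar-component curve and lifting it back to $X$. First I would apply Lemma~\ref{lem: extract kollar component} to $(X, B)$ to obtain a diagram $X \xleftarrow{g} Y \xrightarrow{h} W$ with an exceptional irreducible curve $C$ on $Y$ such that $(W, C_W + B_W)$ is plt and $A_W := -(K_W + C_W + B_W)$ is ample over $Z$. Plt adjunction along the normalisation $C_W^n$ then gives a log pair $(C_W^n, B_{C_W^n})$ with $C_W^n \cong \mathbb{P}^1_k$ and $-(K_{C_W^n} + B_{C_W^n})$ ample. The coefficients of the different $B_{C_W^n}$, controlled by the plt adjunction formula together with the standard-coefficient hypothesis on $(X, B)$, fit the Shokurov paradigm of allowed coefficient tuples on Fano curves.

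Next, by the classical classification of $m$-complements for log Fano $\mathbb{P}^1$-pairs, I would pick $m \in \{1,2,3,4,6\}$ and a section $s_C$ of $\lfloor m A_{C_W^n} \rfloor$ cutting out an $m$-complement $(C_W^n, B^c_{C_W^n})$. On a smooth curve $\mathbf{B}^0$ coincides with $H^0$, so $s_C \in \mathbf{B}^0(C_W^n, \Phi_{C_W^n}; L_{C_W^n})$. Applying Lemma~\ref{lem: restriction is surjective} to $(W, C_W + B_W)$ (or rather to a standard-coefficient modification obtained by replacing $B_W$ with its suitable rounding) then yields a surjection $\mathbf{B}^0_{C_W}(W, \Phi_W; L_W) \twoheadrightarrow \mathbf{B}^0(C_W^n, \Phi_{C_W^n}; L_{C_W^n})$, lifting $s_C$ to an element $s_W \in \mathbf{B}^0_{C_W}(W, \Phi_W; L_W) \subseteq H^0(W, L_W)$.

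To descend to $X$, I would transport $s_W$ along the birational morphism $h: Y \to W$ and then invoke Lemma~\ref{lem: B^0 in higher model} with $\pi = g$ and $S = \emptyset$ (valid because $(X, B)$ is klt with standard coefficients, so $D_X = 0$) to identify $\mathbf{B}^0(Y, \Phi_Y; L_Y) = \mathbf{B}^0(X, \Phi; L)$ and thereby obtain the desired $s \in \mathbf{B}^0(X, \Phi; L)$. The corresponding divisor $\Gamma$ then satisfies $m(K_X + \tfrac{1}{m}\lceil mB \rceil + \tfrac{1}{m}\Gamma) \sim 0$ near $\Exc f$. Log canonicity follows because the lifted complement restricts to the lc complement on $(C_W^n, B_{C_W^n})$, and a standard plt inversion-of-adjunction argument together with Koll\'ar--Shokurov connectedness (cf.\ the strategy of Lemma~\ref{lem: inversion of adjunction for qdlt pair}) propagates lc-ness to a neighbourhood of $C_W$ on $W$ and then down to $X$ along $g$.

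The hard part will be matching up the $\mathbf{B}^0$-groups across the four ambient objects $C_W^n \to W \leftarrow Y \to X$, since Lemma~\ref{lem: B^0 in higher model} gives an equality only on the klt/standard-coefficient model $X$, while Lemma~\ref{lem: restriction is surjective} demands standard coefficients on $W$. I expect the cleanest route is to work throughout with the standard-coefficient roundings $\tfrac{1}{m}\lceil m B \rceil$ on $X$ and their log pull-backs on $Y$ and $W$, absorbing the differences between $B$ and its roundings via Lemma~\ref{lem: B^0 get smaller when boundary gets bigger}. A secondary subtlety is the potential $(2,3,5)$-configuration on $C_W^n$ in residue characteristic $5$ (Remark~\ref{rem: (2,3,5) case in char 5}), which is precisely the reason one must allow $m = 6$ in the target set.
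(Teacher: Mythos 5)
Your overall architecture matches the paper's: extract a Koll\'ar component via Lemma~\ref{lem: extract kollar component}, find an $m$-complement on the exceptional curve $C$, lift it through the chain $C \to W \to Y \to X$ via Lemmas~\ref{lem: restriction is surjective}, \ref{lem: B^0 in higher model}, and~\ref{lem: B^0 get smaller when boundary gets bigger}, and conclude lc-ness via inversion of adjunction and Koll\'ar--Shokurov connectedness. That is exactly the paper's strategy.

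However, there is a genuine gap at the step where you assert ``On a smooth curve $\mathbf{B}^0$ coincides with $H^0$, so $s_C \in \mathbf{B}^0(C_W^n,\Phi_{C_W^n};L_{C_W^n})$.'' This is false in general: $\mathbf{B}^0$ is cut out by images of trace maps from alterations, and even for $\mathbb{P}^1$ over a field of positive characteristic it can be a proper subspace of $H^0$ when the pair $(C,\Phi_C)$ fails to be globally $+$-regular (equivalently, when the geometric fibre is not globally $F$-regular). Establishing the equality $\mathbf{B}^0(C,\Phi_C;L_C)=H^0(C,L_C)$ is precisely the crux of the lemma and is where the residue-characteristic hypotheses and the value of $m$ enter. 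The paper's proof does real work here: it first shows, using ampleness of $-(K_{C_{\bar k}}+B_{C_{\bar k}})$ and the standard-coefficient structure, that the coefficients of $B_{C_{\bar k}}$ agree with those of $B_C$ (so no coefficient gets multiplied by $p$ under base change), hence $(\Phi_C)_{\bar k}=\{(m+1)B_{C_{\bar k}}\}$; it then invokes \cite[Lemma 4.9]{HW19b} to conclude that $(C_{\bar k},(\Phi_C)_{\bar k})$ is globally $F$-regular for the \emph{minimal} such $m$, and transfers this to global $+$-regularity of $(C,\Phi_C)$ via \cite[Corollary 6.17]{BMP+20}. Note that you also need to choose $m$ to be \emph{minimal} with an $m$-complement on $(C,B_C)$, not arbitrary; otherwise $\{(m+1)B_C\}$ may fail to be globally $F$-regular. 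Your remark about the $(2,3,5)$-configuration is accurate as a comment on why $m=6$ must be in the target set, but it is a downstream consequence of this $F$-regularity argument, not a ``secondary subtlety'' that can be appended at the end: the argument establishing $\mathbf{B}^0 = H^0$ on the curve is the heart of the proof and cannot be skipped.
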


\begin{proof}
By Lemma \ref{lem: extract kollar component}, there exist an irreducible, exceptional over $Z$, curve $C$ on a blow-up of $X$ and projective birational map $g: Y\to X$ and $h: Y\to W$ over $Z$ such that
\begin{enumerate}
    \item $g$ extracts $C$ or is the identity if $C\subseteq X$,
    \item $(Y,C+B_Y)$ is plt,
    \item $(W,C_W+B_W)$ is plt and $-(K_W+C_W+B_W)$ is ample over $Z$,
    \item $B_Y^+-B_Y\ge0$,
\end{enumerate}
where $K_Y+bC+B_Y=g^*(K_X+B)$ for $C\nsubseteq\Supp B_Y$, $C_W:=h_*C\neq0$, $B_W:=h_*B_Y$, and $K_Y+C+B_Y^+=h^*(K_W+C_W+B_W)$.

We have 
\begin{align*}
    \mathbf{B}^0(X,\Phi;L)&=\mathbf{B}^0(Y,\Phi_Y;L_Y)\\
    &\supseteq \mathbf{B}^0_C(Y,\Phi^+_Y;L^+_Y)\\
    &=\mathbf{B}^0_{C_W}(W,\Phi_W;L_W),
\end{align*}
where $\Phi_Y,\Phi^+_Y,\Phi_W$ and $L_Y,L_Y^+,L_W$ are defined as in Setting \ref{set:stable sections}. Indeed, the first and third equality hold by Lemma \ref{lem: B^0 in higher model} since $B$ and $C_W+B_W$ have standard coefficients, and the middle inclusion holds by Lemma \ref{lem: B^0 get smaller when boundary gets bigger} since $C+B_Y^+\ge bC+B_Y$.

Note that $L=-m(K_X+\frac{1}{m}\lceil mB\rceil)$ and $L_W=-m(K_W+C_W+\frac{1}{m}\lceil mB_W\rceil)$. Thus by Lemma \ref{lem: restriction is surjective}, restricting sections gives a surjective map
$$
\mathbf{B}^0_{C_W}(W,\Phi_W;L_W)\to\mathbf{B}^0(C,\Phi_C;L_C),
$$
where $C$ is identified with $C_W$, $K_C+B_C=(K_W+C_W+B_W)|_C$, and $\Phi_C, L_C$ are defined as in Setting \ref{set:stable sections}.

Let $m\in\{1,2,3,4,6\}$ be the minimal number such that $(C,B_C)$ admits an $m$-complement. 

Since $-(K_C+B_C)$ is ample and $B_C$ has standard coefficients, we must have that $C_{\bar{k}}=\Pp^1_{\bar{k}}$, and the coefficients of $(B_C)_{\bar{k}}=B_{C_{\bar{k}}}$ must exactly be the same as the coefficients of $B_C$. This is because any coefficient of $B_{C_{\bar{k}}}$ is either equal to the corresponding coefficient of $B_C$ or at least $p$ times such a coefficient (hence it is at least $\frac{p}{2}$), and the existence of the latter type of coefficients would contradict the ampleness of $-(K_{C_{\bar{k}}}+B_{C_{\bar{k}}})$. Therefore we have 
$$(\Phi_C)_{\bar{k}}=(\{(m+1)B_C\})_{\bar{k}}=\{(m+1)B_{C_{\bar{k}}}\}.$$
By \cite[Lemma 4.9]{HW19b}, $(C_{\bar{k}},(\Phi_C)_{\bar{k}})$ is globally F-regular, and hence $(C,\Phi_C)$ is globally $+$-regular by \cite[Corollary 6.17]{BMP+20}. Therefore
$$
\mathbf{B}^0(C,\Phi_C;L_C)=H^0(C,L_C).
$$
In particular, there exists an lc $m$-complement $(C,B^c_C)$ of $(C,B_C)$ for some $m\in\{1,2,3,4,6\}$ which can be lifted to $W$. More precisely, there exists a non-zero section
$$
s\in\mathbf{B}^0_{C_W}(W,\Phi_W;L_W)
$$
with associated divisor $\Gamma$ such that $m(K_W+C_W+B_W^c)\sim0$ and 
$$
(K_W+C_W+B_W^c)|_C=K_C+B_C^c,
$$
where $B_W^c:=\frac{1}{m}\lceil mB_W\rceil+\frac{1}{m}\Gamma$. By inversion of adjunction, $(W,C_W+B_W^c)$ is log canonical along $C_W$. Note that 
$$
K_W+C_W+\epsilon B_W+(1-\epsilon)B_W^c
$$
is thus plt along $C_W$ and $\Qq$-equivalent over $Z$ to $\epsilon(K_W+C_W+B_W)$, and hence by Koll\'ar-Shokurov connectedness principle (cf. \cite[Theorem 5.2]{Tan18}), it is plt for any $0<\epsilon<1$. Hence $(W,C_W+B_W^c)$ is lc, and thus an $m$-complement of $(W,C_W+B_W)$.

Let $K_Y+C+B_Y^c=h^*(K_W+C_W+B_W^c)$ and $B^c:=g_*(C+B_Y^c)$. Then $(X,B^c)$ is an $m$-complement of $(X,B)$ which by the above inclusions of $\mathbf{B}^0$ corresponds to a section in $\mathbf{B}^0(X,\Phi;L)$.
\end{proof}

\begin{proof}[Proof of Proposition \ref{prop: m-complement for (X,S+B)}]
Let $S^n$ be the normalisation of $S$. By Lemma \ref{lem: restriction is surjective}, restricting sections gives a surjective map
$$
\mathbf{B}^0_{S}(X,\Phi;\lf mA\rf)\to\mathbf{B}^0(S^n,\Phi_{S^n};\lf mA_{S^n}\rf),
$$
notice that  $\lf mA\rf=-m(K_X+S+\frac{1}{m}\lceil mB\rceil)$ and $\lf mA_{S^n}\rf=-m(K_{S^n}+\frac{1}{m}\lceil mB_{S^n}\rceil)$.\par
By Lemma \ref{lem: m-complement for surface}, there exists $\Gamma_{S^n}\in| -m(K_{S^n}+\frac{1}{m}\lceil mB_{S^n}\rceil)|$ such that $(S^n,B^c_{S^n})$ is an m-complement of $(S^n,B_{S^n})$ for $B^c_{S^n}=\frac{1}{m}\lceil mB_{S^n}\rceil+\frac{1}{m}\Gamma_{S^n}$, and which moreover lifts to 
$$
\Gamma\in|-m(K_X+S+\frac{1}{m}\lceil mB\rceil)|.
$$
Set $B^c=\frac{1}{m}\lceil mB\rceil+\frac{1}{m}\Gamma$. Then $m(K_X+S+B^c)\sim0$ and $(K_X+S+B^c)|_{S^n}=K_{S^n}+B^c_{S^n}$. By inversion of adjunction (\cite[Corollary 4.10]{TY20}) applied to $(X,S+(1-\epsilon)B^c)$ for $0<\epsilon\ll1$, we get that $(X,S+B^c)$ is lc in a neighborhood of $\Exc f$, and hence it is an $m$-complement of $(X,S+B)$.
\end{proof}

\begin{rem}\label{rem: m=6 when char p=5}
With notation as in Proposition \ref{prop: m-complement for (X,S+B)}, if the residue field has characteristic $p=5$ and there exists an effective divisor $D$ such that $(S^n,B_{S^n}+\epsilon D)$ is not globally $+$-regular over $Z$ for any $\epsilon>0$, where $S^n$ is the normalisation of $S$ and $K_{S^n}+B_{S^n}=(K_X+S+B)|_{S^n}$, then $m=6$. 
\end{rem}
\begin{proof}
Under these assumptions, we see that in the proof of Lemma \ref{lem: m-complement for surface} $B_C=\frac{1}{2}P_1+\frac{2}{3}P_2+\frac{4}{5}P_3$ for three distinct points $P_1,P_2$ and $P_3$ by Remark \ref{rem: (2,3,5) case in char 5}. The smallest $m$ such that this $(C,B_C)$ admits an $m$-complement is $m=6$.
\end{proof}

\section{Flips admitting a qdlt complement}
The goal of this section is to show that the existence of flips for flipping contractions admitting a qdlt $k$-complement, where $k\in\{1,2,3,4,6\}$.

\begin{prop}\label{Prop: Existence of flip for qdlt pair}
Let $(X,\Delta)$ be a $\Qq$-factorial qdlt 3-dimensional pair with standard coefficients over $V$. Let $f:X\to Z$ be a $(K_X+\Delta)$-flipping contraction over $V$ such that $\rho(X/Z)=1$ and let $\Sigma$ be a flipping curve. Assume that there exists a qdlt 6-complement $(X,\Delta^c)$ of $(X,\Delta)$ such that $\Sigma\cdot S<0$ for some irreducible divisor $S\subseteq\lf\Delta^c\rf$. Then the flip $f^+:X^+\to Z$ exists.
\end{prop}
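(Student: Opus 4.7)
The plan is to reduce the problem to a pl-flip with $S$ as the plt component and then finitely generate the restricted algebra on the normalization $S^n$, using the $\mathbf{B}^0$-lifting machinery developed earlier in this section together with two-dimensional MMP on $S^n$.

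First, I would produce a pl-flip configuration. Since $\rho(X/Z)=1$, the divisors $-(K_X+\Delta)$, $-S$, and $\Delta^c-\Delta$ are all positive multiples of one another in the relative N\'eron--Severi space: the last, because $(K_X+\Delta^c)\equiv_Z 0$ gives $\Delta^c-\Delta\equiv_Z -(K_X+\Delta)$. Setting
\[
B:=\Delta^c-S-\epsilon\bigl(\lfloor\Delta^c\rfloor-S\bigr)-\delta\bigl(\Delta^c-\Delta\bigr)
\]
for suitable $0<\delta\ll\epsilon\ll 1$, the pair $(X,S+B)$ becomes plt along $S$ (every reduced component of $\lfloor\Delta^c\rfloor$ other than $S$ now has coefficient strictly less than $1$, by the qdlt hypothesis on $(X,\Delta^c)$), while $-S$ and $-(K_X+S+B)$ are both $f$-ample (the latter being numerically proportional over $Z$ to $-\delta(\Delta^c-\Delta)$ up to an $O(\epsilon)$ correction, still $f$-anti-ample). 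The resulting $f:(X,S+B)\to Z$ is a pre-pl-flipping contraction whose flip coincides with the desired $(K_X+\Delta)$-flip, since both give numerically proportional classes over $Z$.

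Next, I would invoke the pl-flip principle available in mixed characteristic via \cite{BMP+20}: the flip of $(X,S+B)$ over $Z$ exists if and only if the restricted log canonical algebra
\[
R_S:=\bigoplus_{m\geq 0}\mathrm{Im}\Bigl(f_*\mathcal{O}_X\bigl(\lfloor m(K_X+S+B)\rfloor\bigr)\longrightarrow (f|_{S^n})_*\mathcal{O}_{S^n}\bigl(\lfloor m(K_{S^n}+B_{S^n})\rfloor\bigr)\Bigr)
\]
is a finitely generated $\mathcal{O}_Z$-algebra, where $S^n$ is the normalization of $S$ (normal up to a universal homeomorphism by the qdlt lemma) and $K_{S^n}+B_{S^n}:=(K_X+S+B)|_{S^n}$. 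To verify finite generation, I would combine Lemma~\ref{lem: restriction is surjective} with Lemma~\ref{lem: B^0 in higher model}: the qdlt 6-complement $(X,\Delta^c)$ restricts by adjunction to a qdlt 6-complement of $(S^n,B_{S^n})$, supplying the hypothesis needed to lift sufficiently many sections from $S^n$ to $X$. The full log canonical algebra on $S^n$ over the image $f(S)$ is then finitely generated by Tanaka's MMP for excellent surfaces \cite{Tan18}, and combined with the lifting this yields the finite generation of $R_S$.

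The principal obstacle is the section-lifting step in residue characteristic $p=5$: the exceptional surface complement $(\frac{1}{2},\frac{2}{3},\frac{4}{5})$ from Remark~\ref{rem: (2,3,5) case in char 5} blocks the direct globally $+$-regular argument, so one is forced to lift an actual $6$-complement (as in Remark~\ref{rem: m=6 when char p=5}). Ensuring that the qdlt structure of $(X,\Delta^c)$ interacts compatibly with the unique exceptional non-klt valuation identified in Proposition~\ref{prop: unique non-klt place for 6-complement} is exactly what allows Proposition~\ref{prop: m-complement for (X,S+B)} together with the restriction apparatus to apply, closing the argument.
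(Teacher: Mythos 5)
There is a genuine gap, and it sits exactly at the point you flag but do not resolve. Your plan is to perturb away all reduced components of $\lfloor\Delta^c\rfloor$ other than $S$, reduce to a single pl-flip $(X,S+B)$, and then obtain the flip from finite generation of the restricted algebra on $S^n$ via the $\mathbf{B}^0$-lifting lemmas plus Tanaka's surface MMP. Two problems. First, the equivalence ``flip exists iff the restricted algebra $R_S$ is finitely generated'' is a Shokurov-type reduction that is not available off the shelf in mixed characteristic: \cite{BMP+20} proves pl-flips under a global $+$-regularity hypothesis on the plt component (Corollary 7.9 and Theorem 8.25), not via restricted-algebra finite generation, and the passage from the full algebra to the restricted one requires lifting pluri-log-canonical sections, which is precisely what can fail here. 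Second, the lifting machinery you invoke (Lemma \ref{lem: restriction is surjective} together with Lemma \ref{lem: B^0 in higher model}) only gives surjectivity onto $\mathbf{B}^0(S^n,\Phi_{S^n};\cdot)$; to lift \emph{enough} sections you need $\mathbf{B}^0(S^n,\cdot)=H^0(S^n,\cdot)$, i.e. global $+$-regularity of the surface pair, and in residue characteristic $5$ this is exactly what may fail (Remark \ref{rem: (2,3,5) case in char 5}, Proposition \ref{prop: unique non-klt place for 6-complement}: every $6$-complement of $(S^n,B_{S^n})$ can be non-klt with an exceptional non-klt valuation). Restricting the qdlt complement $(X,\Delta^c)$ to $S^n$ does not cure this, so your final paragraph asserts rather than proves the key step.

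The paper instead exploits the qdlt complement through a case analysis on how $\lfloor\Delta^c\rfloor$ meets the flipping curve $\Sigma$: either $(X,\Delta^c)$ is plt along the flipping locus, so $(S^n,B^c_{S^n})$ is klt and global $+$-regularity holds (Proposition \ref{prop: case 1}); or there is a second divisor $E\subseteq\lfloor\Delta^c\rfloor$ with $E\cdot\Sigma<0$, handled as in \cite[Proposition 5.3]{HW19b} (Proposition \ref{prop: case 2}); or $E\cdot\Sigma\ge0$ with $E$ meeting the flipping locus, in which case $E|_{S^n}$ is a non-exceptional integral curve and Lemma \ref{lem: 6-complement with an integral boundary} restores global $+$-regularity (Proposition \ref{prop: case 3}). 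Your uniform perturbation $B:=\Delta^c-S-\epsilon(\lfloor\Delta^c\rfloor-S)-\delta(\Delta^c-\Delta)$ destroys the coefficient-one divisor $E$ that these latter two cases depend on, so the very structure that lets the argument get past $p=5$ is discarded at the outset. To repair the proof you would need to reinstate this trichotomy (or an equivalent use of $E$) rather than run a single restricted-algebra argument.
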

\begin{proof}

Write $\Delta=aS+D+B$, where $1\ge a\ge0$, the divisor $D$ is integral, $S\nsubseteq\Supp(D+B)$, and $\lf B\rf=0$. By replacing $\Delta$ by $S+(1-\frac{1}{k}D+B)$ for $k\gg0$, we can assume that $(X,\Delta)$ is plt. Then we can split the proof into three cases:
\begin{enumerate}
    \item $(X,\Delta^c)$ is plt along the flipping locus, or
    \item $\Sigma\cdot E<0$ for a divisor $E\subseteq\lf\Delta^c\rf$ different from $S$, or
    \item $\Sigma\cdot E\ge0$ for a divisor $E\subseteq\lf\Delta^c\rf$ intersecting the flipping locus.
\end{enumerate}
Case (1) and Case (3) follow from Proposition \ref{prop: case 1} and Proposition \ref{prop: case 3} respectively, applied to $(X,\Delta)$. Case (2) follow from Propostion \ref{prop: case 2} applied to $(X,\Delta+bE)$ where $b\ge0$ is such that $\mult_E(\Delta+bE)=1$.
\end{proof}

\begin{prop}\label{prop: case 1}
Let $(X,S+B)$ be a 3-dimensional $\Qq$-factorial plt pair over $V$ with $S$ is irreducible and $B$ having standard coefficients. Let $f:X\to Z$ be a pl-flipping contraction over $V$ such that $\rho(X/Z)=1$. Assume that there exists a plt 6-complement $(X,S+B^c)$ of $(X,S+B)$ over $Z$. Then the flip exists.
\end{prop}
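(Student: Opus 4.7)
The goal is to produce the flip $f^+$ by establishing that the relative log canonical algebra
\[
\Ee := \bigoplus_{m \ge 0} f_*\Oo_X(\lf m(K_X+S+B)\rf)
\]
is a finitely generated $\Oo_Z$-algebra, so that $X^+ = \proj_Z \Ee$. I would follow the standard pl-flip paradigm: reduce to a problem on the surface $S^n$, settle it there using surface MMP, and lift the answer back to $X$ via the plt 6-complement hypothesis.

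First, I record the set-up. Since $\rho(X/Z) = 1$ and both $-S$ and $-(K_X+S+B)$ are $f$-ample, the flipping curves lie on $S$, and $K_X+S+B$, $S$ and $A := -(K_X+S+B)$ are numerically proportional over $Z$. The plt 6-complement $(X,S+B^c)$ gives $6A \sim_Z 6(B^c - B) \ge 0$, making $A$ effective up to $\Qq$-linear equivalence over $Z$. By adjunction, $(S^n, B_{S^n})$ is klt with $K_{S^n}+B_{S^n} = (K_X+S+B)|_{S^n}$, and $(S^n, B^c_{S^n})$ is a plt 6-complement of $(S^n, B_{S^n})$ (after possibly replacing $S$ by its normalisation, which, by the plt assumption, is a universal homeomorphism).

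Next, I would handle the problem on the surface $S^n$. By the MMP and base point free theorem for excellent klt surfaces (\cite{Tan18}), the restricted algebra
\[
\Ee_{S^n} := \bigoplus_m H^0(S^n/Z,\ \lf m A_{S^n}\rf), \qquad A_{S^n} := -(K_{S^n}+B_{S^n}),
\]
is finitely generated over $\Oo_Z$.

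Then I would lift sections from $S^n$ to $X$. By Lemma \ref{lem: restriction is surjective}, restricting sections induces a surjection
\[
\mathbf{B}^0_S(X, \Phi;\ \lf mA\rf) \twoheadrightarrow \mathbf{B}^0(S^n, \Phi_{S^n};\ \lf mA_{S^n}\rf).
\]
The plt 6-complement of $(S^n, B_{S^n})$ together with Lemma \ref{lem: 6-complement with an integral boundary} forces $(S^n, B_{S^n}+\epsilon D)$ to be globally $+$-regular over $Z$ for every effective divisor $D$ and $0 \le \epsilon \ll 1$, whence
\[
\mathbf{B}^0(S^n, \Phi_{S^n};\ \lf mA_{S^n}\rf) = H^0(S^n, \lf mA_{S^n}\rf)
\]
for $m$ in a fixed arithmetic progression. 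Therefore every section of $\lf m A_{S^n}\rf$ lifts to a section of $\lf mA \rf$ on $X$ in those degrees. The kernel of restriction $\Ee_m \to H^0(S^n, \lf mA_{S^n}\rf)$ is $f_*\Oo_X(\lf mA\rf - S)$; using that $-S$ is $f$-ample, this kernel is controlled by an iterated use of the same restriction sequence on pairs with decreasing $S$-coefficient, and combining with finite generation of $\Ee_{S^n}$ gives finite generation of $\Ee$ via a standard Shokurov-style truncated algebra argument.

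The principal obstacle is the lifting step: securing the identification $\mathbf{B}^0 = H^0$ on $S^n$ in sufficiently many degrees. This rests entirely on the plt hypothesis of the 6-complement, applied through Lemma \ref{lem: 6-complement with an integral boundary} to access global $+$-regularity of $(S^n, B_{S^n})$. Once this identification is in hand, the rest of the argument is bookkeeping combining the surface MMP of \cite{Tan18}, the lifting lemma of Section 4, and the standard reduction of pl-flips to finite generation of the restricted algebra.
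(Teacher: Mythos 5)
Your overall scaffold — restrict to $S^n$, show the restricted algebra is controlled, and lift sections back via $\mathbf{B}^0$ — is a reasonable paraphrase of the $+$-regular pl-flip machinery, but it reinvents more than is needed: once global $+$-regularity of $(S^n,B_{S^n}+\epsilon D)$ over $Z$ is established, one simply cites \cite[Corollary 7.9, Theorem 8.25]{BMP+20} to get the flip directly, which is all the paper does.

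The real problem is the step that is supposed to deliver global $+$-regularity. You invoke Lemma \ref{lem: 6-complement with an integral boundary}, but that lemma requires the 6-complement on the surface to have the shape $(S^n, E + B^c_{S^n})$ with $E$ a \emph{non-exceptional irreducible curve} appearing with coefficient one in the boundary. Here, since $(X,S+B^c)$ is plt, adjunction gives that $(S^n, B^c_{S^n})$ is \emph{klt}: there are no coefficient-one components in $B^c_{S^n}$ at all, so there is no divisor $E$ to which Lemma \ref{lem: 6-complement with an integral boundary} could apply. (Calling it a ``plt 6-complement'' on $S^n$ is not wrong, but it obscures the point — it is klt, and that is precisely what one should use.) The correct route is via Remark \ref{rem: (2,3,5) case in char 5} together with Proposition \ref{prop: unique non-klt place for 6-complement}: if $(S^n, B_{S^n}+\epsilon D)$ failed to be globally $+$-regular over $Z$ for some $D$, then by Proposition \ref{prop: unique non-klt place for 6-complement} \emph{every} 6-complement of $(S^n, B_{S^n})$ would be non-klt. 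Since $(S^n, B^c_{S^n})$ is a klt 6-complement, this is impossible, and global $+$-regularity holds; then \cite[Corollary 7.9, Theorem 8.25]{BMP+20} gives the flip. So the gap is not in the architecture but in the key lemma you cite — it simply does not apply in this case, and the argument needs to be replaced by the contrapositive of Proposition \ref{prop: unique non-klt place for 6-complement}.
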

\begin{proof}
Write $K_{S^n}+B_{S^n}=(K_X+S+B)|_{S^n}$ and $K_{S^n}+B_{S^n}^c=(K_X+S+B^c)|_{S^n}$ for the normalisation $S^n$ of $S$. The pair $(S^n,B_{S^n}^c)$ is a klt 6-complement, so for any effective divisor $D$, $(S^n,B_{S^n}+\epsilon D)$ is globally $+$-regular for $0\le\epsilon\ll1$. In particular, the flip exists by \cite[Corollary 7.9, Theorem 8.25]{BMP+20}.
\end{proof}

The following proposition addresses Case (2).

\begin{prop}\label{prop: case 2}
 Let $(X,\Delta)$ be a 3-dimensional $\Qq$-factorial qdlt pair over $V$, $f:X\to Z$ be a $(K_X+\Delta)$-flipping contraction over $V$ such that $\rho(X/Z)=1$, and $\Sigma$ be a flipping curve. Assume that there exist distinct divisors $S,E\subseteq\lf\Delta\rf$ such that $S\cdot\Sigma<0$ and $E\cdot\Sigma<0$. Then the flip exists.
\end{prop}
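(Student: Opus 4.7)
The plan is to reduce the present qdlt configuration to a plt pl-flipping setup to which Proposition \ref{prop: m-complement for (X,S+B)} applies, and then to obtain the flip via the global $+$-regularity criterion of \cite[Corollary 7.9, Theorem 8.25]{BMP+20}.

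First I would perturb by setting $\Delta' := \Delta - \tfrac{1}{N} E$ for a sufficiently large integer $N \geq 7$. One checks that $(X,\Delta')$ is plt in a neighborhood of $\Exc f$: since $(X,\Delta)$ is qdlt, lowering the coefficient of $E$ strictly below $1$ forces every log canonical centre of $(X,\Delta)$ involving $E$ to acquire strictly positive log discrepancy in $(X,\Delta')$, leaving $S$ as the only lc centre near the flipping locus. Since $E\cdot\Sigma<0$ and $(K_X+\Delta)\cdot\Sigma<0$, for $N$ large we still have $(K_X+\Delta')\cdot\Sigma<0$. Writing $\Delta'=S+B'$, the boundary $B'$ has standard coefficients (since $1-1/N$ is standard). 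Moreover $-S$ is $f$-ample because $S\cdot\Sigma<0$ and $\rho(X/Z)=1$, so $(X,S+B')$ is a pl-flipping setup with standard coefficients.

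Next I would apply Proposition \ref{prop: m-complement for (X,S+B)} to obtain an $m$-complement $(X,S+(B')^c)$ of $(X,S+B')$ in a neighborhood of $\Exc f$, for some $m\in\{1,2,3,4,6\}$. Since $N\geq 7$ does not divide any such $m$, the defining inequality $(B')^c \ge \tfrac{1}{m}\lceil m B'\rceil$ forces the coefficient of $E$ in $(B')^c$ to equal $1$ (as $\tfrac{1}{m}\lceil m-m/N\rceil = 1$ for $N>m$). By adjunction for qdlt pairs, passing to the normalization $S^n$ of $S$ yields an $m$-complement $(S^n,((B')^c)_{S^n})$ of the klt surface pair $(S^n,(B')_{S^n})$, where $K_{S^n}+(B')_{S^n}=(K_X+S+B')|_{S^n}$, and whose integer boundary contains the curve $E|_{S^n}$ with coefficient $1$.

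Finally I would invoke Lemma \ref{lem: 6-complement with an integral boundary}: provided $E|_{S^n}$ is non-exceptional over $f(S)$ and meets the exceptional locus of $S^n\to f(S)$, for any effective divisor $D$ on $S^n$ and all sufficiently small $\epsilon'>0$, the pair $(S^n, (B')_{S^n}+\epsilon' D)$ is globally $+$-regular over $Z$. Combined with \cite[Corollary 7.9, Theorem 8.25]{BMP+20}, this yields the flip of $(X,S+B')$, which coincides with the flip of $(X,\Delta)$ as both are governed by the common flipping contraction $f$ (using $\rho(X/Z)=1$). The main obstacle is verifying the non-exceptionality of $E|_{S^n}$ over $f(S)$: if $E\cap S$ consists only of curves numerically proportional to $\Sigma$, then $E|_{S^n}$ is entirely exceptional and Lemma \ref{lem: 6-complement with an integral boundary} does not apply directly. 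In that degenerate case I would exploit the symmetry between $S$ and $E$ by running the same argument with their roles exchanged (working on the normalization of $E$); failing that, one must analyze the rigid configuration separately, appealing to Proposition \ref{prop: unique non-klt place for 6-complement} and the classification of plt surface singularities in characteristic $>3$ to force the required $+$-regularity.
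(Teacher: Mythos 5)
Your reduction to Proposition \ref{prop: m-complement for (X,S+B)} and Lemma \ref{lem: 6-complement with an integral boundary} does not cover the essential case of this proposition, and you flag the problem yourself without resolving it. Lemma \ref{lem: 6-complement with an integral boundary} needs the coefficient-one curve in the complement on $S^n$ to be \emph{non-exceptional} over $Z$ and to meet the exceptional locus. In the present configuration $E\cdot\Sigma<0$, so (as $\rho(X/Z)=1$) every flipping curve is contained in $E\cap S$, and the argument used in Proposition \ref{prop: case 3} to rule out exceptionality of $E|_{S^n}$ — namely $0>(E|_{S^n})^2=E\cdot(E\cap S)\ge 0$ — is exactly what fails here: since $E\cdot\Sigma<0$ one can have $E\cdot(E\cap S)<0$, and $E\cap S$ may consist precisely of the flipping curves. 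That degenerate situation (the flipping curve is a stratum of the qdlt structure) is not a corner case but the typical one distinguishing Case (2) from Case (3), and it is the reason Case (2) requires a separate argument at all. Your two fallbacks do not close it: swapping $S$ and $E$ faces the identical obstruction on $E^n$ (the configuration is symmetric), and ``analyze the rigid configuration separately, appealing to Proposition \ref{prop: unique non-klt place for 6-complement} and the classification of plt surface singularities'' is a placeholder, not an argument — note moreover that the presence of the coefficient-one exceptional curve $E|_{S^n}$ in the adjoint boundary is precisely an obstruction to the global $+$-regularity of $(S^n,B_{S^n}+\epsilon D)$ that the criterion of \cite[Corollary 7.9, Theorem 8.25]{BMP+20} asks for, so one cannot expect to force it in general.

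The paper does something different: it proves Case (2) by transporting the proof of \cite[Proposition 5.3]{HW19b}, which handles exactly the two-negative-divisors configuration by a mechanism adapted to the flipping curve being a codimension-two stratum, rather than by producing a non-exceptional coefficient-one curve on $S^n$ and invoking the analogue of Lemma \ref{lem: 6-complement with an integral boundary}. Two smaller points: lowering only the coefficient of $E$ need not make $(X,\Delta')$ plt near $\Exc f$ if $\lf\Delta\rf$ has further components meeting the flipping locus (compare the reduction at the start of the proof of Proposition \ref{Prop: Existence of flip for qdlt pair}); and even when $E|_{S^n}$ does have a non-exceptional component, you would still need to check that this component meets the exceptional locus of $S^n\to f(S)$, which does not follow from what you wrote.
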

\begin{proof}
This follows by exactly the same proof of \cite[Proposition 5.3]{HW19b}.
\end{proof}

Now, we deal with Case (3). Note that we will apply this proposition later in the case when $B$ does not have standard coefficients.

\begin{prop}\label{prop: case 3}
Let $(X,S+B)$ be a 3-dimensional $\Qq$-factorial plt pair over $V$ with $S$ irreducible. Let $f:X\to Z$ be a pl-flipping contraction over $V$ such that $\rho(X/Z)=1$ and $-S$ is relatively ample, and $\Sigma$ be a flipping curve. Assume that there exists a 6-complement $(X,S+E+B^c)$ of $(X,S+B)$ such that $E$ is irreducible, $E\cdot\Sigma\ge0$, and $E\cap\Sigma\neq\emptyset$. Then the flip exists. 
\end{prop}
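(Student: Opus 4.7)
The plan is to model the proof on the analogous Case (3) argument in \cite[Section 5]{HW19b}, using a $\Qq$-factorial dlt modification of the $6$-complement together with an auxiliary MMP whose intermediate flips are handled by the already-established Cases (1) and (2).

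First, by applying the MMP for $\Qq$-factorial dlt threefolds \cite[Theorem 1.2]{TY20} to a log resolution of $(X,S+E+B^c)$, I would construct a $\Qq$-factorial dlt modification $\pi:Y\to X$ of the lc pair $(X,S+E+B^c)$. Writing $K_Y+\Delta_Y=\pi^*(K_X+S+E+B^c)$ with $\Delta_Y=\pi^{-1}_*(S+E+B^c)+F$ (where $F$ is the reduced $\pi$-exceptional divisor), the pair $(Y,\Delta_Y)$ is $\Qq$-factorial qdlt with $6(K_Y+\Delta_Y)\sim_Z 0$. Both $S_Y:=\pi^{-1}_*S$ and $E_Y:=\pi^{-1}_*E$ sit in $\lf\Delta_Y\rf$, and $S_Y$ is anti-ample on any lift of the flipping curve $\Sigma$.

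Next, I would introduce the auxiliary $\Qq$-factorial dlt pair $(Y,\Gamma_Y)$ with standard coefficients, defined by $\Gamma_Y:=\pi^{-1}_*(S+B)+F$. Since $(X,S+B)$ is plt, there is an effective $\pi$-exceptional $\Qq$-divisor $N$ with $K_Y+\Gamma_Y=\pi^*(K_X+S+B)+N$, so $K_Y+\Gamma_Y$ is $(f\circ\pi)$-negative on lifts of $\Sigma$. I would then run a $(K_Y+\Gamma_Y)$-MMP over $Z$. At each step requiring a flip, the corresponding flipping pair is $\Qq$-factorial qdlt with standard coefficients, and admits a qdlt $6$-complement given by the pushforward of $\Delta_Y$; the reduced part of this complement still contains the images of $S_Y$ and $E_Y$, with the image of $S_Y$ negative on the flipping curve. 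By Proposition \ref{Prop: Existence of flip for qdlt pair}, the flip exists. Recurrences of Case (3) at this inner level would be handled by an outer induction on a complexity such as $(\rho(Y/Z),\text{number of components of }F)$ in lexicographic order; Lemma \ref{lem: S_1 and S_2 has no intersection after flop if not qdlt} ensures that after each problematic flip, either the qdlt structure persists (so Case (2) applies) or the two distinguished divisors become disjoint on the flipped model (removing the Case (3) configuration). The MMP terminates, and the canonical contraction of the output model to $Z$ produces the desired flip $f^+:X^+\to Z$.

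The main obstacle is arranging the inductive treatment of Case (3) inside the auxiliary MMP so as to avoid infinite descent, that is, identifying a complexity that strictly decreases across each Case (3) recurrence. This requires a careful combinatorial analysis of how the divisors $S',E'\subseteq\lf\Delta'\rf$ reconfigure after each flop or flip, using Lemmas \ref{lem: S_1 and S_2 has no intersection after flop if not qdlt} and \ref{lem: articulation point is stable between birational morphism}. A secondary challenge will be the termination of the $(K_Y+\Gamma_Y)$-MMP itself, which in mixed characteristic should follow from a special termination argument for qdlt pairs admitting a trivial $6$-complement, adapted from \cite[Section 5]{HW19b} via the tools of \cite{BMP+20} and \cite{TY20}.
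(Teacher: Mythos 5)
Your proposal does not close the argument, and the gaps you flag yourself are fatal rather than technical. The central problem is circularity: you invoke Proposition \ref{Prop: Existence of flip for qdlt pair} for the flips of the auxiliary $(K_Y+\Gamma_Y)$-MMP, but the proof of that proposition is precisely the case division (1)--(3), with Case (3) being the very statement you are trying to prove. To break the circle you would need the inner flips to fall only under Cases (1) and (2), or a genuinely decreasing complexity across each recurrence of the Case (3) configuration; you propose $(\rho(Y/Z),\#F)$ lexicographically, but neither quantity is shown to drop at such a step (flips preserve the Picard rank and the number of exceptional components), and Lemma \ref{lem: S_1 and S_2 has no intersection after flop if not qdlt} only tells you that qdlt-ness persists or the two divisors separate --- it does not by itself rule out a new Case (3) configuration on the flipped model. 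Termination of the auxiliary MMP is likewise left open. Two further mismatches: $\Gamma_Y=\pi^{-1}_*(S+B)+F$ need not have standard coefficients, since the proposition is stated (and later applied) without assuming $B$ has standard coefficients, so Proposition \ref{Prop: Existence of flip for qdlt pair} would not even be applicable as stated; and you never explain why the output of your MMP yields the flip of the original $f:X\to Z$.

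More importantly, your plan never uses the hypotheses $E\cdot\Sigma\ge0$ and $E\cap\Sigma\neq\emptyset$ in any essential way, and these are exactly what make this case easy. The paper's proof is short and direct: restrict the 6-complement to the normalisation $S^n$ to get $(S^n,E|_{S^n}+B^c_{S^n})$; observe that $E|_{S^n}$ cannot be exceptional over $Z$, since otherwise $0>(E|_{S^n})^2=E\cdot(E\cap S)=E\cdot\sum\lambda_i\Sigma_i\ge0$ for flipping curves $\Sigma_i$ (here both hypotheses on $E$ enter); then Lemma \ref{lem: 6-complement with an integral boundary} (a 6-complement containing a non-exceptional irreducible curve meeting the exceptional locus) gives that $(S^n,B_{S^n}+\epsilon D)$ is globally $+$-regular over $Z$ for every effective $D$ and $0\le\epsilon\ll1$, and the pl-flip then exists by \cite[Corollary 7.9, Theorem 8.25]{BMP+20}. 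No auxiliary MMP, dlt modification, or induction is needed. I would redirect your effort toward this restriction-and-$+$-regularity argument rather than trying to repair the inductive scheme.
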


\begin{proof}
Let $S^n$ be the normalisation of $S$. By perturbing the coefficients of $\lf B\rf$, we may assume that $(X,S+B)$ is plt. The pair $(S^n,B_{S^n})$ admits a 6-complement $(S^n,E|_{S^n}+B^c_{S^n})$, where $K_{S^n}+B_{S^n}=(K_X+S+B)|_{S^n}$ and $K_{S^n}+E|_{S^n}+B_{S^n}^c=(K_X+S+E+B^c)|_{S^n}$.

We claim that $E|_{S^n}$ is not exceptional over $Z$. Indeed, otherwise 
$$
0>(E|_{S^n})^2=E\cdot(E\cap S)=E\cdot\sum\lambda_i\Sigma_i\ge0
$$
for some flipping curves $\Sigma_i$ and some $\lambda_i>0$, which is a contradiction.

By Lemma \ref{lem: 6-complement with an integral boundary}, for any effective divisor $D$, the pair $(S^n,B_{S^n}+\epsilon D)$ is globally $+$-regular over $Z$ for any $0\le\epsilon\ll1$, and so the flip exists by \cite[Corollary 7.9, Theorem 8.25]{BMP+20}.
\end{proof}

\section{Divisorial extractions}

In this section we prove that we could extract a single divisorial place for 6-complements.

\begin{prop}\label{prop: existence of divisorial extractions}
Let $(X,\Delta)$ be a three-dimensional $\Qq$-factorial lc pair over $V$. Assume $X$ is klt and $6(K_X+\Delta)\sim0$. Let $E$ be a non-klt valuation of $(X,\Delta)$ over $X$. Then there exists a projective birational morphism $g:Y\to X$ such that $E$ is its exceptional locus.
\end{prop}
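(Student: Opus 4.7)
The plan is to apply a standard MMP-based divisorial extraction argument, adapted to the mixed-characteristic threefold setting via Theorem \ref{thm: MMP with scaling} and the MMP tools developed in \cite{BMP+20,TY20}. First, I would take a $\Qq$-factorial dlt modification $\pi:W\to X$ of $(X,\Delta)$, whose existence follows from the standard dlt modification techniques available in our setting. Its exceptional divisors are precisely the divisorial non-klt valuations of $(X,\Delta)$, which, since $(X,\Delta)$ is lc, all have log discrepancy zero; these include $E$, realized as a $\pi$-exceptional prime divisor $E_W$. Denote the remaining exceptional prime divisors by $F_1,\ldots,F_n$. Writing $\Delta_W:=\pi_*^{-1}\Delta+E_W+\sum_{i}F_i$, one has $K_W+\Delta_W=\pi^*(K_X+\Delta)$ and $(W,\Delta_W)$ is $\Qq$-factorial dlt.

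Next, for a small rational $\epsilon>0$, I would set $\Gamma_W:=\Delta_W-\epsilon\sum_{i}F_i$, so that $(W,\Gamma_W)$ remains $\Qq$-factorial dlt with $E_W$ as its unique coefficient-one component, and
\[
K_W+\Gamma_W=\pi^*(K_X+\Delta)-\epsilon\sum_{i}F_i\equiv_\pi-\epsilon\sum_{i}F_i.
\]
Then I would run a $(K_W+\Gamma_W)$-MMP over $X$ with scaling of an ample divisor, appealing to Theorem \ref{thm: MMP with scaling}. Since $-\epsilon\sum_{i}F_i$ is anti-effective and $\pi$-exceptional, hence not $\pi$-pseudoeffective, and since a Mori fibre space over $X$ is ruled out by the birationality of $\pi$, the MMP terminates with a minimal model $g:Y\to X$. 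On $Y$ the divisor $K_Y+\Gamma_Y$ is $g$-nef while equivalent to the pushforward of the anti-effective $-\epsilon\sum_{i}F_i$, so the negativity lemma forces every strict transform $(F_i)_Y$ to vanish.

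The main obstacle is to verify that $E_W$ itself is not contracted along the way. The decisive observation is that $\pi^*(K_X+\Delta)$ is $\pi$-numerically trivial, so for every $\pi$-exceptional curve $C$ one has $(K_W+\Gamma_W)\cdot C=-\epsilon\sum_{i}(F_i\cdot C)$; any $(K_W+\Gamma_W)$-negative extremal ray must therefore cross some $F_i$, and by an appropriate use of the MMP-with-scaling formalism the contracted locus can be forced to lie in some $F_i$ rather than in $E_W$. Combined with the fact that $(W,\Gamma_W)$ is plt along $E_W$, so that adjunction provides a klt pair $(E_W,\Diff_{E_W}(\Gamma_W-E_W))$ on which $(K_W+\Gamma_W)|_{E_W}$ has controlled intersection behavior, a standard analysis then shows that no step of the MMP can contract $E_W$. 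The resulting $g:Y\to X$ is therefore the desired projective birational morphism whose exceptional locus realizes precisely the valuation $E$.
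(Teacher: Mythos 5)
There are two genuine gaps. First, your appeal to Theorem \ref{thm: MMP with scaling} to run the auxiliary MMP over $X$ is circular: that theorem is deduced from Theorem \ref{thm: existence of flip for klt pairs}, and the present proposition is an ingredient in the proof of that flip theorem. At this stage of the paper one only has the cone and contraction theorems (\cite[Theorems 9.8, 9.10]{BMP+20}, \cite[Propositions 4.1, 4.2]{TY20}) and special termination; the flips occurring in the auxiliary MMP must be produced by hand. This is exactly where the hypothesis $6(K_X+\Delta)\sim 0$ enters: the pair $(Y_n,\Delta_n)$ pulled back from $(X,\Delta)$ is its own $6$-complement, and for a flipping ray $R$ one uses the argument of \cite[Lemma 3.1]{HW19a} to find an exceptional divisor $E'\le \Delta_n$ with $R\cdot E'>0$, so that Proposition \ref{prop: case 3} applies. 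Your proposal never uses $6(K_X+\Delta)\sim 0$ (nor, see below, the klt-ness of $X$), which signals that the needed flips have not actually been constructed.

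Second, the auxiliary boundary has the wrong sign. With $\Gamma_W=\Delta_W-\epsilon\sum_i F_i$ you get $K_W+\Gamma_W\equiv_\pi-\epsilon\sum_i F_i$, which is \emph{anti}-effective. A $(K_W+\Gamma_W)$-negative extremal ray is then one that is \emph{positive} on $\sum_i F_i$; such curves need not lie in any $F_i$ and may well lie in $E_W$ (e.g.\ curves in $E_W$ crossing some $F_i$), so nothing prevents a divisorial step from contracting $E_W$. Moreover, at the end of the MMP the conclusion fails: $-\epsilon\sum_i (F_i)_Y$ being nef over $X$ does not force the $F_i$ to be contracted --- the negativity lemma \cite[Lemma 3.39]{KM98} only gives effectivity of $\epsilon\sum_i(F_i)_Y$, which is automatic (indeed $-F$ can be nef over $X$ for a nonzero exceptional divisor $F$, e.g.\ the exceptional divisor of a blow-up, so the output could still carry some $F_i$). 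The correct device, which is where the klt-ness of $X$ is used, is to compare with $(X,(1-\epsilon)\Delta)$: writing $\pi^*(K_X+(1-\epsilon)\Delta)=K_Y+(1-\epsilon)\pi_*^{-1}\Delta+aE+\sum a_iE_i$ with $a,a_i<1$ (and $a>0$ for $\epsilon$ small), one runs the MMP for $\Delta'=(1-\epsilon)\pi_*^{-1}\Delta+aE+\sum E_i$, so that $K_Y+\Delta'\sim_{\Qq,X}\sum(1-a_i)E_i\ge 0$ with support exactly the divisors to be removed and \emph{not} containing $E$. Then every negative ray has contracted locus inside $\bigcup E_i$ (so $E$ survives), and at the end nefness over $X$ of the effective exceptional divisor $\sum(1-a_i)(E_i)_Y$ together with the negativity lemma forces it to vanish, i.e.\ all $E_i$ are contracted. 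As written, your argument establishes neither that $E_W$ survives nor that the $F_i$ disappear.
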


\begin{proof}
Let $\pi: Y\to X$ be a dlt modification of $(X,\Delta)$ such that $E$ is a divisor on $Y$ (see \cite[Corollary 4.9]{TY20}). Let $\Exc(\pi)=E+E_1+\cdots+E_m$. Write $K_Y+\Delta_Y=\pi^*(K_X+\Delta)$ 
and $K_Y+(1-\epsilon)\pi^{-1}_*\Delta+aE+a_1E_1+\cdots+a_mE_m=\pi^*(K_X+(1-\epsilon)\Delta),$ where $a,a_1,...,a_m<1$ as $X$ is klt, and set
$$
\Delta'=(1-\epsilon)\pi^{-1}_*\Delta+aE+E_1+\cdots+E_m.
$$
By taking $0<\epsilon\ll1$, we can assume that $a>0$. Note that 
\begin{align}\label{eqn: divisorial}
    K_Y+\Delta'\sim_{\Qq,X}(1-a_1)E_1+\cdots+(1-a_m)E_m,
\end{align}
so that the $(K_Y+\Delta')$-MMP over $X$ will not contract $E$ and the contracted loci are always contained in the support of the strict transform of $(1-a_1)E_1+\cdots+(1-a_m)E_m$. The negativity lemma implies that the output of a $(K_Y+\Delta')$-MMP over $X$ is the sought-for extraction of $E$. Hence, it is enough to show that we can run such an MMP.\par
By induction, we can assume that we have constructed the $n$-th step of this MMP $h:Y\dasharrow Y_n$ and we need to show that we can construct the $(n+1)$-st step. Let $\pi_n: Y_n\to X$ be the induced morphism, $\Delta'_n:=h_*\Delta'$, and $\Delta_n=h_*\Delta_Y$. By abuse of notation, we denote the strict transforms of $E,E_1,...,E_m$ by the same symbols.\par
The cone theorem is valid by \cite[Theorem 9.8]{BMP+20} (and also by \cite[Proposition 4.2]{TY20}). Let $R$ be a $(K_{Y_n}+\Delta'_n)$-negative extremal ray. By \eqref{eqn: divisorial}, we have $R\cdot E_i<0$ for some $i\ge1$. Then the contraction $f:Y_n\to Y'_n$ of $R$ exists by \cite[Theorem 9.10]{BMP+20} (and also by \cite[Propostion 4.1]{TY20}).\par
If $f$ is divisorial, then we set $Y_{n+1}:=Y'_n$. If $f$ is a flipping contraction, then the proof of \cite[Lemma 3.1]{HW19a} applied to $(Y_n,\Delta_n)$ over $X$ implies the existence of a divisor $E'\subseteq\Exc(\pi_n)$ such that $R\cdot E'>0$. Since $(Y_n,\Delta'_n)$ is dlt, $(Y_n,\Delta_n)$ is lc, $6(K_{Y_n}+\Delta_n)\sim_{\pi_n}0$, and $E'\le\Delta_n$, we can apply Proposition \ref{prop: case 3} to infer the existence of the flip of $f$.\par
The termination of this MMP follows by the usual special termination argument (see \cite[Proposition 4.5]{TY20} and \cite[Theorem 9.7]{BMP+20}). 
\end{proof}

\begin{cor}\label{cor: qdlt modification with unique extraction E}
Let $(X,S+B)$ be a three-dimensional $\Qq$-factorial plt pair defined over $V$. Assume that $X$ is klt, $S$ is a prime divisor and $(X,S+B)$ admits a 6-complement $(X,S+B^c)$ such that $(S^n,B^c_{S^n})$ has a unique non-klt place, where $K_{S^n}+B^c_{S^n}=(K_X+S+B^c)|_{S^n}$ and $S^n$ is the normalisation of $S$.\par
Then $(X,S+B^c)$ is qdlt in a neighborhood of $S$, or $\lf B^c\rf$ is disjoint from $S$ and there exists a projective birational map $\pi: Y\to X$ such that $(Y,S_Y+B^c_Y)$ is qdlt over a neighborhood of $S$, the exceptional divisor $E$ is irreducible and $E\subseteq\lf B^c_Y\rf$, where $K_Y+S_Y+B^c_Y=\pi^*(K_X+S+B^c)$.
\end{cor}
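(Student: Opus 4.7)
The plan is to split into two cases according to whether $\lf B^c\rf$ meets $S$, and in each case invoke Lemma \ref{lem: inversion of adjunction for qdlt pair}, either on $X$ itself or on a single divisorial extraction furnished by Proposition \ref{prop: existence of divisorial extractions}. By adjunction, any component of $\lf B^c\rf$ meeting $S$ induces a coefficient-one component of the different $B^c_{S^n}$; combined with the uniqueness of the non-klt place of $(S^n,B^c_{S^n})$, this forces exactly one of the following to hold: (a) precisely one component $E$ of $\lf B^c\rf$ meets $S$ and realises the unique non-klt place as a divisor on $S^n$; or (b) $\lf B^c\rf$ is disjoint from $S$ and the unique non-klt place is exceptional over $S^n$.

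In case (a), after shrinking $X$ to a neighborhood of $S$ one writes $B^c=E+B$ with $\lf B\rf=0$. The hypothesis then gives that $(S^n,(E\cap S)|_{S^n}+B_{S^n})$ is plt, where $B_{S^n}$ is the induced different. Applying Lemma \ref{lem: inversion of adjunction for qdlt pair} directly to $(X,S+E+B)$ yields the first conclusion.

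In case (b), the unique exceptional non-klt place $F$ of $(S^n,B^c_{S^n})$ lifts through adjunction to an exceptional non-klt valuation $E$ of $(X,S+B^c)$ whose center lies on $S$. Since $X$ is klt and $6(K_X+S+B^c)\sim 0$, Proposition \ref{prop: existence of divisorial extractions} produces a projective birational map $\pi:Y\to X$ whose exceptional locus is precisely $E$. Writing $K_Y+S_Y+B^c_Y=\pi^*(K_X+S+B^c)$, the vanishing of the log discrepancy of $E$ forces its coefficient in $B^c_Y$ to equal one, so $E\subseteq\lf B^c_Y\rf$. Because the strict transforms of components of $\lf B^c\rf$ remain disjoint from $S_Y$ in a neighborhood of $S_Y$, the divisor $E$ is the only component of $\lf B^c_Y\rf$ meeting $S_Y$ there. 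Any further exceptional non-klt valuation of $(Y,S_Y+B^c_Y)$ centered on $S_Y$ would descend under adjunction to a non-klt place of $(S^n,B^c_{S^n})$ distinct from $F$, contradicting uniqueness; it follows that $(E\cap S_Y)|_{S_Y^n}$ is irreducible and is the unique non-klt divisor of the restricted pair on $S_Y^n$, so this restriction is plt. A second application of Lemma \ref{lem: inversion of adjunction for qdlt pair}, now to $(Y,S_Y+E+(B^c_Y-E))$, delivers the second conclusion.

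The hard part is the adjunction bookkeeping in case (b): one must verify that extracting the single valuation $E$ on $X$ corresponds, via the induced map $S_Y^n\to S^n$, to the extraction of $F$, and that no spurious non-klt places appear on the restriction to $S_Y^n$ after extraction. Once this irreducibility-and-uniqueness package is established, both cases collapse to direct invocations of Lemma \ref{lem: inversion of adjunction for qdlt pair}.
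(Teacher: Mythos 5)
Your case split and the invocation of Lemma \ref{lem: inversion of adjunction for qdlt pair} and Proposition \ref{prop: existence of divisorial extractions} track the paper's strategy, and case (a) is essentially identical to the paper's. In case (b), however, there is a genuine gap. Lemma \ref{lem: inversion of adjunction for qdlt pair} only yields that $(Y,S_Y+E+(B^c_Y-E))$ is qdlt \emph{in a neighborhood of $S_Y$ inside $Y$}, whereas the corollary asserts qdlt-ness \emph{over a neighborhood of $S$ in $X$}, i.e.\ on $\pi^{-1}(U)$ for an open $U\supseteq S$. Since $E$ is $\pi$-exceptional, $\pi^{-1}(U)$ contains the whole of $E$ lying over $U$, including portions of $E$ that need not lie in any small neighborhood of $S_Y$. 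Your argument rules out only non-klt valuations whose centers meet $S_Y$ (by descending to $(S^n,B^c_{S^n})$), but says nothing about a log canonical centre of $(Y,S_Y+B^c_Y)$ that is \emph{disjoint} from $S_Y$ yet contained in $E$, and such a centre would destroy qdlt-ness on $\pi^{-1}(U)$.

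The paper closes exactly this gap with the articulation-point mechanism: it does not extract ``the valuation lifting $F$'' but rather chooses, on a dlt modification $Y'\to X$ with crepant boundary $\Delta^c_{Y'}$, an exceptional divisor $E_1$ that is \emph{not an articulation point} of the dual complex $D(\Delta^{c,=1}_{Y'})$. After extracting $E_1$, if a log canonical centre disjoint from $S_1$ intersected $E_1$, then on a common log resolution the strict transform of $E_1$ would be an articulation point, and Lemma \ref{lem: articulation point is stable between birational morphism} would force $E_1$ to be an articulation point upstairs, a contradiction. This combinatorial step is what upgrades ``qdlt near $S_1$'' to ``qdlt over a neighborhood of $S$,'' and it also sidesteps the question (which your write-up does not resolve) of whether a unique exceptional non-klt valuation of $(X,S+B^c)$ lifting $F$ actually exists. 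To repair your proof you would need both to justify the choice of divisor being extracted and to add an argument excluding lc centres on $E$ away from $S_Y$.
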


\begin{proof}
We work in a sufficiently small open neighbourhood of $S$. First, suppose that $\lf B^c\rf$ is non-empty and intersects $S$. Under this assumption the unique log canonical centre of $(S^n,B^c_{S^n})$ must be an irreducible curve given as $\lf B^c\rf|_{S^n}$. In particular, $\lf B^c\rf$ must be irreducible (cf. Remark \ref{rem: generic point of k-stratum has codim k}), the pair $(S^n,B^c_{S^n})$ is plt, and $(X,S+B^c)$ is qdlt by Lemma \ref{lem: inversion of adjunction for qdlt pair}.\par
Thus, we can assume that $\lf B^c\rf=0$, and so the dlt modification $\pi:Y\to X$ is nontrivial. Set $K_Y+\Delta^c_Y=\pi^*(K_X+S+B^c)$ and pick an irreducible exceptional divisor $E_1$ which is not an articulation point of  $D(\Delta_Y^{c,=1})$ (for example pick any divisor with the farthest distance edgewise in $D(\Delta_Y^{c,=1})$ from the node corresponding to $S$). Let $g:X_1\to X$ be the extraction of $E_1$ (see Proposition \ref{prop: existence of divisorial extractions}) and write 
$$
K_{X_1}+S_1+E_1+B^c_1=g^*(K_X+S+B^c)
$$
where $S_1,B^c_1$ are the strict transforms of $S,B^c$, respectively. Note that $S_1$ intersects $E_1$.\par
We claim that $(X_1,S_1,E_1+B^c_1)$ is qdlt in a neighbourhood of $S_1$. To this end we note that 
$$
K_{S^n_1}+B^c_{S^n_1}:=(K_{X_1}+S_1+E_1+B^c_1)|_{S^n_1}=g^*(K_{S^n}+B^c_{S^n}),
$$
where $S^n_1$ is the normalisation of $S_1$. Since $(S^n,B^c_{S^n})$ admits a unique non-klt place, we get that $(S^n_1,B^c_{S^n_1})$ is plt. In particular, Lemma \ref{lem: inversion of adjunction for qdlt pair} implies our claim.\par
Therefore, it is enough to show that $(X_1,S_1+E_1+B^c_1)$ does not admit a log canonical centre which is disjoint from $S_1$ and intersects $E_1$. By contradiction, assume that it does admit such a log canonical centre. Let $h:W\to X_1$ be a projective birational morphism which factors through $Y$
\begin{center}
\begin{tikzcd}
g\circ h: W \arrow[r, "h_Y"]
& Y \arrow[r, "\pi"] & X,
\end{tikzcd}
\end{center}
and such that $g\circ h$ is a log resolution of $(X,S+B)$. Write $K_W+\Delta_W^c=h^*(K_{X_1}+S_1+E_1+B^c_1)$. Since $S_1\cap E_1$ is disjoint from the other log canonical centres, the strict transform $E_{W,1}$ of $E_1$ is an articulation point of $D(\Delta^{c,=1}_W)$. Since $K_W+\Delta_W^c=h_Y^*(K_Y+\Delta_Y^c)$, Lemma \ref{lem: articulation point is stable between birational morphism} implies that $E_1$ is an articulation point of $D(\Delta_Y^{c,=1})$ which is a contradiction. In particular, $S_1,E_1$, and the irreducible curve $S_1\cap E_1$ are the only log canonical centres of $(X_1,S_1+E_1+B^c_1)$.
\end{proof}

\section{Existence of flips}
In this section we prove the main theorem. We start by showing the following result.

\begin{thm}\label{thm: existence of flip for klt pair with standard coef}
Let $(X,\Delta)$ be a three-dimensional $\Qq$-factorial klt pair with standard coefficients over $V$. Assume that $V$ is as in Setting \ref{setting of V} and additionally is a local ring with infinite residue field. If $f:X\to Z$ is a flipping contraction over $V$, then the flip $f^+:X^+\to Z$ exists.
\end{thm}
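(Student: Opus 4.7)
The plan is to reduce to proving existence of a pl-flip via a Shokurov-type extraction, manufacture a $6$-complement via Proposition \ref{prop: m-complement for (X,S+B)}, and then conclude either directly using Proposition \ref{prop: case 1} or, in the non-plt case, via the qdlt flip existence of Proposition \ref{Prop: Existence of flip for qdlt pair} combined with the divisorial extraction of Corollary \ref{cor: qdlt modification with unique extraction E}.

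First I would reduce the klt flipping contraction to a pl-flipping contraction. Starting from the $\Qq$-factorial klt pair $(X,\Delta)$ with $f\colon X\to Z$ and $\rho(X/Z)=1$, I would apply Proposition \ref{prop: existence of divisorial extractions} to extract a divisor $S$ over $X$ of sufficiently small positive log discrepancy with respect to a perturbed pair $(X,\Delta+aH)$, obtaining $\pi\colon Y\to X$ with exceptional locus $S$. A $(K_Y+S+\pi_\ast^{-1}\Delta)$-MMP over $Z$ with scaling, available thanks to the inductively known pl-flips together with \cite[Theorems 9.7 and 9.10]{BMP+20} and \cite[Theorem 1.2]{TY20}, then produces a pl-flipping contraction $f'\colon X'\to Z'$ of relative Picard rank one, where $(X',S'+B')$ is plt with standard coefficients and both $-S'$ and $-(K_{X'}+S'+B')$ are $f'$-ample; the usual finite-generation descent reduces existence of the flip of $f$ to existence of the flip of $f'$.

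Next, Proposition \ref{prop: m-complement for (X,S+B)} supplies a $6$-complement $(X',S'+B'^c)$ of $(X',S'+B')$ near $\Exc f'$. If this complement is plt along the flipping locus, then the induced surface complement $(S'^n,B'^c_{S'^n})$ is klt and Proposition \ref{prop: case 1} delivers the flip of $f'$ directly. Otherwise, Proposition \ref{prop: unique non-klt place for 6-complement} forces the non-klt place of $(S'^n,B'^c_{S'^n})$ to be unique and exceptional over $Z'$, so Corollary \ref{cor: qdlt modification with unique extraction E} applies: it returns either the conclusion that $(X',S'+B'^c)$ is already qdlt near $S'$, in which case $-S'$ being $f'$-ample forces $S'\cdot\Sigma<0$ for every flipping curve $\Sigma$ and Proposition \ref{Prop: Existence of flip for qdlt pair} yields the flip, or a further extraction $\rho\colon Y\to X'$ with $(Y,S_Y+E+B'^c_Y)$ qdlt and $E\subseteq\lf B'^c_Y\rf$ irreducible and exceptional. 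In the latter case I would run a short $(K_Y+S_Y+B'^c_Y+(1-\epsilon)E)$-MMP over $Z'$; each extremal step is a qdlt flipping contraction meeting $\lf B'^c_Y\rf$ negatively and is thus covered by Proposition \ref{Prop: Existence of flip for qdlt pair}, termination follows from special termination \cite[Proposition 4.5]{TY20}, and the output is the sought flip of $f'$.

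The main obstacle I foresee is the Shokurov reduction in the first step: ensuring that the MMP reaches a pl-flipping contraction of relative Picard rank one while preserving both the standard-coefficient hypothesis and the $f'$-ampleness of $-S'$ is delicate in mixed characteristic, and depends on a careful interplay between the inductive pl-flip hypothesis and termination. A secondary subtlety is verifying the $+$-regularity-failure hypothesis of Proposition \ref{prop: unique non-klt place for 6-complement}: one obtains it by contrapositive from Proposition \ref{prop: case 1}, since if every perturbation $(S'^n,B'^c_{S'^n}+\epsilon D)$ were globally $+$-regular for some $\epsilon>0$, the flip would already be supplied by the $+$-regularity machinery of \cite[Corollary 7.9 and Theorem 8.25]{BMP+20}.
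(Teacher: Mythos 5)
Your proposal follows the paper's broad strategy — reduce to pl-flips, manufacture a $6$-complement, distinguish the qdlt case from the non-qdlt case, and in the latter extract a single divisor via Corollary \ref{cor: qdlt modification with unique extraction E} and run an MMP to contract it — but there are several genuine gaps, concentrated in the final step.

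The most serious gap is the assertion that running a $(K_Y+S_Y+B'^c_Y+(1-\epsilon)E)$-MMP produces ``the sought flip of $f'$.'' This is precisely the hard part of the argument. Since $K_Y+S_Y+B'^c_Y+E\sim_{\Qq,Z'}0$, your MMP is a $(-\epsilon E)$-MMP, and while the negativity lemma forces $E$ to disappear eventually, nothing you say explains why the output $X^+$ is small over $Z'$ with $K_{X^+}+S^++B^+$ relatively ample. The paper works instead with a convex combination
\[
\Delta_1=\lambda(S_1+B_1+aE_1)+(1-\lambda)(S_1+B_1^c+E_1),
\]
chosen so that $K_{X_1}+\Delta_1\sim_{\Qq,Z}\lambda\mu S_1+\lambda'E_1$; this makes the link with the original pair $(X,S+B)$ transparent, and the proof then spends substantial effort (a) proving the Claim that the MMP reaches either a divisorial contraction of $E_n$ or a semi-ample fibration contracting $E_n$ (including the case split $g(E_1)\subseteq S$ versus $g(E_1)\nsubseteq S$ and a careful semi-ampleness argument via \cite[Theorem 1.2]{Wit21} and Keel-type descent), and (b) showing at the very end, via a discrepancy comparison and the ``$S^+$ cannot descend to $Z$'' argument, that $K_{X^+}+\Delta^+$ is ample rather than numerically trivial. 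Your outline omits all of this.

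Two further issues. First, Proposition \ref{Prop: Existence of flip for qdlt pair} requires the flipping pair to have \emph{standard} coefficients together with a qdlt $6$-complement; your MMP boundary $S_Y+B'^c_Y+(1-\epsilon)E$ does not have standard coefficients, and you never track the strict transform of the standard-coefficient pair $(X_i,S_i+E_i+B_i)$ whose complement is $(X_i,S_i+E_i+B_i^c)$ — the paper does exactly this to justify each application of Proposition \ref{Prop: Existence of flip for qdlt pair}. Relatedly, you assume ``each extremal step is a qdlt flipping contraction'' but qdlt-ness can be destroyed by a flop that separates $S_i$ from $E_i$; the paper addresses this via Lemma \ref{lem: S_1 and S_2 has no intersection after flop if not qdlt} and then shows that if $S_{n+1}\cap E_{n+1}=\emptyset$, a contraction of $E_{n+1}$ already exists. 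Second, in the initial Shokurov reduction you invoke Proposition \ref{prop: existence of divisorial extractions} to extract the pl divisor, but that proposition requires $6(K_X+\Delta)\sim 0$ and a non-klt valuation of an lc pair; it is not applicable to your setup. The Shokurov reduction to pl-flips is a separate, standard device and should be cited as such (as the paper does).
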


\begin{proof}
We will assume throughout that $Z$ is a sufficiently small affine neighborhood of $Q:=f(\Exc(f))$. We say that a $\Qq$-Cartier divisor $D$ is ample if it is relatively ample over $Z$.\par

By Shokurov’s reduction to pl-flips, it suffices to show the existence of pl-flips. Let $(X,S+B)$ be a plt pair with standard coefficients and $f:X\to Z$ a pl-flipping contraction. In particular $-S$ and $-(K_X+S+B)$ are $f$-ample, and so $\Exc(f)\subseteq S$. By \cite[Corollary 7.9, Theorem 8.25]{BMP+20}, the flip exists unless there exists an effective divisor $D$ such that $(S^n,B_{S^n}+\epsilon D)$ is not globally $+$-regular over $T=f(S)$ for any $\epsilon>0$, where $S^n$ is the normalisation of $S$ and $K_{S^n}+B_{S^n}=(K_X+S+B)|_{S^n}$. Thus we can assume that this is the case.\par
Proposition \ref{prop: m-complement for (X,S+B)} shows the existence of an $m$-complement $(X,S+B^c)$ of $(X,S+B)$ and Remark \ref{rem: m=6 when char p=5} implies that $m=6$. Let $(S^n,B^c_{S^n})$ be the induced 6-complement of $(S^n,B_{B^n})$. By Propositon \ref{prop: unique non-klt place for 6-complement}, the pair $(S^n,B^c_{S^n})$ has a unique place $C$ of log discrepancy zero which is exceptional over $T$.\par
If $(X,S+B^c)$ is qdlt, then the flip exists by Proposition \ref{Prop: Existence of flip for qdlt pair}. Thus, by Corollary \ref{cor: qdlt modification with unique extraction E}, we may assume that $\lf B^c\rf=0$ and there exists a qdlt modification $g:X_1\to X$ of $(X,S+B^c)$ with an irreducible exceptional divisor $E_1$. Let $f_1: X_1\to Z$ be the induced map to $Z$, and write $K_{X_1}+S_1+B_1+aE_1=g^*(K_X+S+B)$, and $K_{X_1}+S_1+B_1^c+E_1=g^*(K_X+S+B^c)$. In particular, $S_1\cap E_1$ is the unique log canonical place of $(S^n,B^c_{S^n})$, and so there are two possibilities: either $g(E_1)\subseteq S$ and $f_1(E_1)=Q$, or $g(E_1)\nsubseteq S$ is a curve intersecting $S$.\par
We would like to run a $(K_{X_1}+S_1+B_1+aE_1)$-MMP. It could possibly happen that $a<0$ so we take $0<\lambda\ll1$ and set 
$$
\Delta_1:=\lambda(S_1+B_1+aE_1)+(1-\lambda)(S_1+B^c_1+E_1),
$$
so that $K_{X_1}+\Delta_1\sim_{\Qq,Z}\lambda(K_{X_1}+S_1+B_1+aE_1)$, and $(X_1,\Delta_1)$ is plt.\par
Since $\rho(X/Z)=1$ and both $-(K_X+S+B)$ and $-S$ are ample over $Z$, it follows that $K_X+S+B\sim_{\Qq,Z}\mu S$ for some $\mu>0$ and so 
\begin{align}\label{eqn: existence of flips}
    K_{X_1}+\Delta_1\sim_{\Qq,Z}\lambda(K_{X_1}+S_1+B_1+aE_1)\sim_{\Qq,Z}\lambda\mu S_1+\lambda'E_1,
\end{align}
where $\lambda'\ge0$. Note $\lambda'>0$ if $g(E_1)\subseteq S$ and $\lambda'=0$ if $g(E_1)\nsubseteq S$.

\begin{claim}\label{claim: semiample over Z_q}
$S_1|_{X_{1,\Qq}}$ is semiample over $Z_{\Qq}$.
\end{claim}
\begin{proof}
Since  $g^*(S)=S_1+a_1E_1$ for some $a_1\ge0$ and $-E_1$ is $f$-ample over $X$, we see that $S_1$ is semi-ample over $X$. Notice that $X_{\Qq}=Z_{\Qq}$, thus the statement follows.
\end{proof}

\begin{claim}
There exists a sequence of $(K_{X_1}+\Delta_1)$-flips $X_1\dasharrow...
\dasharrow X_n$ over $Z$ such that either $X_n$ admits a $(K_{X_n}+\Delta_n)$-negative contraction of $E_n$ of relative Picard rank one, or $K_{X_n}+\Delta_n$ is semiample with the associated fibration contracting $E_n$. Here $\Delta_n$ and $E_n$ are strict transforms of $\Delta_1$ and $E_1$ respectively.
\end{claim}
In the course of the proof we will show that the qdlt-ness of $(X_1,S_1+E_1+B^c_1)$ is preserved (see Lemma \ref{lem: S_1 and S_2 has no intersection after flop if not qdlt}) except possibly at the very last step before the contraction takes place. Therefore, all the flips in this MMP exist by Proposition \ref{Prop: Existence of flip for qdlt pair}.
\begin{proof}
Let $f_i: X_i\to Z$ be the induced map to $Z$. We can assume that all the flipped curves are contracted to $Q\in Z$ under $f_i$, and so $X_1\dasharrow X_n$ is an isomorphism over $Z\backslash\{Q\}$. Let $(X_i,\Delta_i)$ and $(X_i,S_i+E_i+B_i^c)$ be the appropriate strict transforms. The latter pair is a 6-complement of $(X_i,S_i+E_i+B_i)$, where the strict transforms $B_i$ of $B_1$ have standard coefficients. Note that $E_1$ is not contracted as $X_1\dasharrow...\dasharrow X_n$ is a sequence of flips, thus inducing an isomorphism on the generic point of $E_1$.\par
Suppose that $K_{X_n}+\Delta_n$ is nef. There are two cases: either $g(E_1)\subseteq S$ and $f_1(E_1)=Q$, or $g(E_1)\nsubseteq S$. We claim that the former cannot happen. Indeed, assume that $f_1(E_1)=Q$ and let $\pi_1:W\to X_1$ and $\pi_n:W\to X_n$ be a common resolution of $X_1$ and $X_n$ such that $\pi_1$ and $\pi_n$ are isomorphisms over $Z\backslash Q$. Since $K_{X_n}+\Delta_n$ is nef and $K_{X_1}+\Delta_1$ is anti-nef (but not numerically trivial) over $Z$,
$$
\pi_n^*(K_{X_n}+\Delta_n)-\pi_1^*(K_{X_1}+\Delta_1)
$$
is exceptional, nef, and anti-effective over $Z$ by the negativity lemma. Moreover, its support must be equal to the whole exceptional locus over $Z$ as it is non-empty and contracted to $Q$ under the map to $Z$ (cf. \cite[Lemma 3.39(2)]{KM98}). This is impossible, because $E_1$ is not contained in its support while $f_1(E_1)=Q$.\par
Now, assuming that $g(E_1)\nsubseteq S$ is a curve intersecting $S$, we will show that $K_{X_n}+\Delta_n\sim_{\Qq,Z}\lambda\mu S_n$ is semiample. Let $G:=f_n^{-1}(P)$ for a (non-necessarily closed) point $P\in Z$. By \cite[Theorem 1.2]{Wit21} it is enough to show that $S_n|_{G}$ is semiample and $S_n|_{X_{n,\Qq}}$ is semiample over $Z_{\Qq}$. The latter follows from Claim \ref{claim: semiample over Z_q}. For the former, since $X_1\dasharrow X_n$ is an isomorphism over $Z\backslash \{Q\}$, $S_1=g^*S$, and $S$ is semiample over $Z\backslash\{Q\}$, we get that $S_n|_{G}$ is semiample when $P\neq Q$. Thus we may assume that $P=Q$. Since $G$ is a projective variety over a positive characteristic field, by \cite{Kee99} it is enough to verify that $S_n|_{\mathbb{E}(S_n|_G)}$ is semiample. Since $G$ is one-dimensional, every connected component of $\mathbb{E}(S_n|G)\subseteq G$ is either entirely contained in $S_n$ or is disjoint from it. In particular, it is enough to show that $(K_{X_n}+\Delta_n)|_{S_n}$ is semiample. Recall that $S_n\subseteq\lf\Delta_n\rf$, and so $K_{S^n_n}+\Delta_{S^n_n}=(K_{X_n}+\Delta_n)|_{S^n_n}$ is semiample by \cite[Theorem 4.2]{Tan18}, where $S^n_n$ is the normalisation of $S_n$. Since $S^n_n\to S_n$ is a universal homeomorphism (see \cite[Lemma 2.28]{BMP+20}), then by Lemma \ref{lem: semiample can be checked up to uni. homeo. in mixed char} $(K_{X_n}+\Delta_n)|_{S_n}$ is semiample and so is $K_{X_n}+\Delta_n$.\par
Since $(K_{X_n}+\Delta_n)|_{E_n}$ is relatively numerically trivial over $Z\backslash\{Q\}$ (as so is $(K_{X_1}+\Delta_1)|_{E_1}$), we get that the associated semiample fibration contracts $E_n$.
\vspace{1em}

From now on, $K_{X_n}+\Delta_n$ is not nef. In order to run the MMP, we assume that $(X_n, S_n+E_n+B^c_n)$ is qdlt by induction. The cone theorem is valid by \cite[Theorem 9.8]{BMP+20} (also by \cite[Proposition 4.2]{TY20}). Pick $\Sigma_n$ a $(K_{X_n}+\Delta_n)$-negative extremal curve. By \eqref{eqn: existence of flips}, we have $K_{X_n}+\Delta_n\sim_{\Qq,Z}\lambda\mu S_n+\lambda'E_n$, and thus either $\Sigma_n\cdot S_n<0$ or $\Sigma_n\cdot E_n<0$. The contraction of $\Sigma_n$ exists by \cite[Theorem 9.10]{BMP+20} (also by \cite[Proposition 4.1]{TY20}) applied to $(X_n,\Delta_n)$ in the former case and to $(X_n+(1-\epsilon)S_n+E_n+B_n)$ in the latter for $0<\epsilon\ll1$.\par
If the corresponding contraction is divisorial, then we are done as it must contract $E_n$. Hence, we can assume that $\Sigma_n$ is a flipping curve. If $E_n\cdot\Sigma_n\le0$, then $-(K_{X_n}+S_n+B_n+E_n)$ has standard coefficients, is qdlt and ample over the contraction of $\Sigma_n$, so the flip exists by Proposition \ref{Prop: Existence of flip for qdlt pair} as $(X_n,S_n+E_n+B^c_n)$ is a 6-complement. If $E_n\cdot\Sigma_n>0$, then the flip exists by Proposition \ref{prop: case 3} applied to $(X_n,\Delta_n)$.
\vspace{1em}

To conclude the proof we shall show that $(X_{n+1},S_{n+1}+E_{n+1}+B^c_{n+1})$ is qdlt unless $X_{n+1}$ admits a contraction of $E_{n+1}$. By Lemma \ref{lem: S_1 and S_2 has no intersection after flop if not qdlt}, we can suppose that $S_{n+1}\cap E_{n+1}=\emptyset$ and aim for showing that the sought-for contraction exists.\par
Let $\Sigma'$ be a curve which is exceptional over $Q\in Z$, contained neither in $S_{n+1}$ nor $E_{n+1}$, but instersecting $S_{n+1}$ (it exists by connectedness of the exceptional locus over $Q\in Z$, and the fact that both $S_{n+1}$ and $E_{n+1}$ intersect this exceptional locus), and let $C\subseteq E_{n+1}$ be any exceptional curve such that $C\cdot E_{n+1}<0$ (it exists by the negativity lemma as $E_{n+1}$ is exceptional over $Z$). We claim that $C'\cdot S_{n+1}>0$ for every exceptional $C'\nsubseteq E_{n=1}$. To this end, assume by contradiction that there exists $C'\nsubseteq E_{n+1}$ satisfying $C'\cdot S_{n+1}\le0$. Since $\rho(X_{n+1}/Z)=2$, we get that 
$$
C'\equiv aC+b\Sigma'
$$
for $a,b\in \mathbb{R}$. Given $C\cdot S_{n+1}=0$ and $\Sigma'\cdot S_{n+1}\ge0$, we have $b\le 0$. As $C'\cdot E_{n+1}\ge0,~C\cdot E_{n+1}<0$, and $\Sigma'\cdot E_{n+1}\ge0$, we have $a\le0$. Therefore, for an ample divisor $A$ we have 
$$
0<C'\cdot A=(aC+b\Sigma')\cdot A\le0
$$
which is a contradiction.\par
Since $S_{n+1}\cap E_{n+1}$ is empty, $S_{n+1}$ is thus nef and $\mathbb{E}(S_{n+1})\subseteq E_{n+1}$. Hence $S_{n+1}$ is semiample by Claim \ref{claim: semiample over Z_q} and \cite[Theorem 6.1]{Wit20} and induces a contraction of $E_{n+1}$. It does not contract $\Sigma'$, and so is of relative Picard rank one. Moreover, by \eqref{eqn: existence of flips} we have either $\lambda'=0$ and $K_{X_{n+1}}+\Delta_{n+1}\sim_{\Qq,Z}\lambda\mu S_{n+1}$ is semiample with the associated fibration contracting $E_{n+1}$, or $\lambda'>0$, $(K_{X_{n+1}}+\Delta_{n+1})\cdot C<0$, and so the above contraction is a $(K_{X_{n+1}}+\Delta_{n+1})$-negative Mori contraction of relative Picard rank one.
\end{proof}

Let $\phi: X_n\to X^+$ be the contraction of $E_{n}$ as in the previous claim, let $\Delta^+:=\phi_*\Delta_n$, let $S^+:=\phi_* S_n$, and let $B^+:=\phi_*B_n$. Then the induced map $\pi^+:X^+\to Z$ is a small contraction with $\rho(X^+/Z)\le 1$. Recall that 
$$
K_{X_n}+\Delta_n\sim_{\Qq,Z}\lambda(K_{X_n}+S_n+aE_n+B_n).
$$
Since $\phi$ is either $(K_{X_n}+S_n+aE_n+B_n)$-negative of Picard rank one or $(K_{X_n}+S_n+aE_n+B_n)$-trivial , the discrepancies of $(X^+,S^++B^+)$ are not smaller than those of $(X_n,S_n+aE_n+B_n)$. Moreover, since $(K_{X_1}+S_1+aE_1+B_1)$ is anti-nef over $Z$ and not numerically trivial, at least one step of the $(K_{X_1}+S_1+aE_1+B_1)$-MMP has been performed in $X_1\dasharrow X^+$. In particular, there exists a divisorial valuation for which the discrepancy of $(X^+,S^++B^+)$ is higher than the discrepancy of $(X,S+B)$.\par
Therefore, $K_{X^+}+\Delta^+$ cannot be relatively anti-ample, because then\\ $(X^+,S^++B^+)$ would be isomorphic to $(X,S+B)$, which is impossible as the MMP has increased the discrepancies. If $K_{X^+}+\Delta^+$ is relatively numerically trivial, then we claim that $K_{X^+}+\Delta^+\sim_{\Qq,Z}0$. Indeed
$$
K_{X^+}+\Delta^+\sim_{\Qq,Z}\lambda\mu S^+,
$$
for $\lambda,\mu>0$, and since $S^+$ intersects the exceptional locus, we must in fact have that $\Supp\Exc(\pi^+)$. By \cite[Theorem 1.2]{Wit20}, it is thus enough to show $K_{S^{+,n}}+\Delta_{S^{+,n}}$ is semiample, where $S^{+,n}\to S^+$ is the normalisation of $S^+$, which in turn follows from \cite[Theorem 4.2]{Tan18}. Here we used the fact that $S^{+,n}\to S^+$ is a universal homeomorphism (see \cite[Lemma 2.28]{BMP+20}). As a consequence, $S^+$ descends to $Z$. This is impossible as its image in $Z$ is not $\Qq$-Cartier.\par
Thus $K_{X^+}+\Delta^+$ is relatively ample, and so $X^+\to Z$ is the flip of $X\to Z$ by \cite[Corollary 6.4]{KM98}.  
\end{proof}

Given Theorem \ref{thm: existence of flip for klt pair with standard coef}, we can follow the same strategy as in \cite[Theorem 6.3]{Bir16} to move the ``standard coefficients" condition (cf. \cite[Theorem 9.12]{BMP+20}).

\begin{prop}\label{prop: existence of flips when the residue filed is infinite}
Theorem \ref{thm: existence of flip for klt pairs} holds when in addition $V$ is a local ring whose residue field is infinite. 
\end{prop}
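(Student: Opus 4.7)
The plan is to deduce this from Theorem \ref{thm: existence of flip for klt pair with standard coef} by removing the ``standard coefficients'' hypothesis, following the strategy of \cite[Theorem 6.3]{Bir16} (cf. \cite[Theorem 9.12]{BMP+20}). The reduction ports to the mixed characteristic setting essentially verbatim, since the MMP infrastructure needed in dimension $\le 3$ --- the cone and contraction theorems, extraction of divisorial places, and special termination --- is available from \cite[Section 9]{BMP+20} and \cite[Section 4]{TY20}, together with Proposition \ref{prop: existence of divisorial extractions} and Proposition \ref{Prop: Existence of flip for qdlt pair} proved earlier.

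First, by Shokurov's reduction to pl-flips (exactly as used in the proof of Theorem \ref{thm: existence of flip for klt pair with standard coef}), it suffices to construct the flip in the pl-flipping case. Thus we may assume $(X,\Delta) = (X, S+B)$ is $\Qq$-factorial plt with $S$ irreducible, both $-S$ and $-(K_X+S+B)$ are $f$-ample, and $\Exc(f) \subseteq S$. The rational coefficients of $B$ need not be standard, and the task is to reduce to the case in which they are.

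Concretely, one introduces an auxiliary pair $(X, S+B')$ with $B' \le B$ of standard coefficients (obtained by rounding each coefficient of $B$ down to the largest standard value below it), possibly passing to a birational model obtained by extracting additional divisorial places via Proposition \ref{prop: existence of divisorial extractions} in order to preserve the plt-with-standard-coefficients property on the target of the extraction. One then runs a $(K_X+S+B')$-MMP over $Z$: each flip encountered is a pl-flip for a plt pair with standard coefficients, hence exists by Theorem \ref{thm: existence of flip for klt pair with standard coef}; each divisorial contraction exists by \cite[Theorem 9.10]{BMP+20}; and the MMP terminates by special termination (\cite[Proposition 4.5]{TY20}, \cite[Theorem 9.7]{BMP+20}). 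Since $\rho(X/Z)=1$, every $\Qq$-Cartier divisor over $Z$ is numerically proportional to $S$, so the relative canonical models associated to $(X, S+B)$ and $(X, S+B')$ can be identified, and the output of this MMP yields the desired flip $f^+\colon X^+ \to Z$ of the original pair $(X, S+B)$.

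The principal technical obstacle is the bookkeeping that ensures every flip and divisorial contraction in the MMP falls within the scope of our existence results (i.e., plt pairs with standard coefficients), and that the termination step correctly identifies the limit with the flip of $(X, S+B)$ rather than of $(X, S+B')$. Both points are handled exactly as in Birkar's argument, leveraging the numerical rigidity provided by $\rho(X/Z)=1$; no essentially new ingredients beyond those already deployed in Theorem \ref{thm: existence of flip for klt pair with standard coef} are required.
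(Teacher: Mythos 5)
Your proposal takes a genuinely different route from the paper, and the route has a real gap.

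The paper does \emph{not} begin with Shokurov's reduction to pl-flips here (that reduction is only invoked inside the proof of Theorem~\ref{thm: existence of flip for klt pair with standard coef}, where standard coefficients are already in place). Instead, the paper runs an induction on $\zeta(\Delta)$, the number of prime components of $\Delta$ whose coefficient lies outside $\Gamma := \{1\}\cup\{1-\tfrac1n\}$. Writing $\Delta = aS + B$ with $a\notin\Gamma$, it passes to a log resolution $\pi:W\to X$ and forms the new boundary $S_W + B_W$ with $S_W = \pi_*^{-1}S$ given coefficient $1$ (and exceptionals given coefficient $1$). The point is that giving $S_W$ coefficient $1$ \emph{replaces} a non-standard coefficient by a standard one, so $\zeta(S_W+B_W)<\zeta(\Delta)$ and the inductive hypothesis applies to all flips occurring in the two auxiliary MMPs (first for $K_W+S_W+B_W$, then for $K_Y+aS_Y+B_Y$ with scaling of $(1-a)S_Y$). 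The flip of the original pair is then extracted via the negativity lemma and $\rho(X^+/Z)=1$. Your proposal, by contrast, rounds $B$ \emph{down} to a standard $B'\le B$ and runs a $(K_X+S+B')$-MMP directly. This is not the mechanism of \cite[Theorem 6.3]{Bir16} nor of the paper, and it does not obviously work.

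The concrete gap is in the sentence ``Since $\rho(X/Z)=1$ \dots the relative canonical models associated to $(X,S+B)$ and $(X,S+B')$ can be identified.'' Having $\rho(X/Z)=1$ only tells you that $K_X+S+B \equiv_Z \mu S$ and $K_X+S+B' \equiv_Z \mu' S$ for some real numbers; it does not force $\mu'$ to have the same sign as $\mu$. Indeed, $B-B'\geq 0$ is supported on $f$-anti-ample components, so $B-B'\equiv_Z cS$ with $c\ge 0$ and $\mu'=\mu - c$ may well be $\le 0$. In that case $-(K_X+S+B')$ is not $f$-ample, $(X,S+B')$ is not a flipping contraction, and a $(K_X+S+B')$-MMP over $Z$ produces no flip at all --- certainly not the flip of $(X,S+B)$. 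Your mention of ``extracting additional divisorial places via Proposition~\ref{prop: existence of divisorial extractions}'' to rescue this is too vague to address the sign problem; extracting non-klt places of a $6$-complement (which is what that proposition supplies) has no bearing on making $K_X+S+B'$ relatively anti-ample. To repair the argument you would essentially have to adopt the paper's log-resolution trick of promoting a non-standard coefficient to $1$ rather than demoting it to the nearest smaller standard value.
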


\begin{proof}
First, we can assume that every component $S$ of $\Supp\Delta$ is relatively anti-ample. Further, let $\zeta(\Delta)$ be the number of components of $\Delta$ with coefficients not in the set $\Gamma:=\{1\}\cup\{1-\frac{1}{n}~|~n>0\}$. If $\zeta(\Delta)=0$ then the flip exists by Theorem \ref{thm: existence of flip for klt pair with standard coef}. By induction, we can assume that the flip exists for all flipping contractions of log pairs $(X',\Delta')$ such that $\zeta(\Delta')<\zeta(\Delta)$.\par
By replacing $\Delta$ with $\Delta-\frac{1}{l}\lf\Delta\rf$ for $l\gg0$, we can assume $(X,\Delta)$ is klt without changing $\zeta(\Delta)$. Write $\Delta=aS+B$, where $S\nsubseteq\Supp B$ and $a\notin\Gamma$. Let $\pi:W\to X$ be a log resolution of $(X,S+B)$ with exceptional divisor $E$ and set $B_W:=\pi_*^{-1}B+E$. Since $K_X+\Delta\equiv_Z\mu S$ for some $\mu>0$, we have
\begin{align*}
    K_W+S_W+B_W &=\pi^*(K_X+\Delta)+(1-a)S_W+F \\
    &\equiv_Z(1-a+\mu)S_W+F',
\end{align*}
where $S_W:=\pi_*^{-1}S$, and $F,F'$ are effective $\Qq$-divisors exceptional over $X$.\par
Run a $(K_W+S_W+B_W)$-MMP over $Z$. By induction all flips exist in this MMP as $\zeta(S_W+B_W)<\zeta(\Delta)$. Moreover, by the above equation, every extremal ray is negative on $(1-a+\mu)S_W+F'$ and hence on an irreducible component of $\lf S_W+B_W\rf$. In particular, all contractions exist by \cite[Theorem 9.10]{BMP+20} (also by \cite[Proposition 4.1]{TY20}). The cone theorem is valid by \cite[Theorem 9.8]{BMP+20} (also by \cite[Proposition 4.2]{TY20}) and this MMP will terminate by the special termination (cf. \cite[Proposition 4.5]{TY20} and \cite[Theorem 9.7]{BMP+20}). Let $h:W\dasharrow Y$ be an output of this MMP and let $S_Y,B_Y$ and $F_Y$ be the strict transforms of $S_W,B_W$ and $F$ respectively.\par
Now, run a $(K_Y+aS_Y+B_Y)$-MMP with scaling of $(1-a)S_Y$. In particular, if $R$ is a $(K_Y+aS_Y+B_Y)$-negative extremal ray, then $R\cdot S_Y>0$ and this MMP is also a $(K_Y+B_Y)$-MMP. As $\zeta(B_Y)<\zeta(\Delta)$, all the flips in this MMP exist by induction. By the same argument as in the above paragraph, the cone theorem is valid, all contractions exist and this MMP will terminate. Let $(X^+,aS^++B^+)$ be an output of this MMP. We claim that this is the flip of $(X,aS+B)$.\par
To this end, we notice that the negativity lemma applied to a common resolution $\pi_1:W'\to X$ and $\pi_2:W'\to X^+$ implies that 
$$
\pi_1^*(K_X+aS+B)-\pi_2^*(K_{X^+}+S^++B^+)\ge0
$$
Since $(X,aS+B)$ is klt, this shows that $\lf B^+\rf=0$ and all the divisor in $E$ were contracted. In particular, $X\dasharrow X^+$ is an isomorphism in codimension one. We claim $K_{X^+}+aS^++B^+$ is relatively ample over $Z$ and so $(X^+,aS^++B^+)$ is the flip of $X\to Z$.\par
To this end, we note that $\rho(X^+/Z)=1$ (cf. \cite[Lemma 1.6]{AHK07}). Indeed,
$$
\rho(W'/X^+)+\rho(X^+/Z)=\rho(W'/X)+\rho(X/Z)
$$
and $\rho(W'/X)=\rho(W'/X^+)$ is equal to the number of exceptional divisors. Thus $\rho(X^+/Z)=\rho(X/Z)=1$. In particular, to conclude the proof of the theorem it is enough to show that $K_{X^+}+aS^++B^+$ cannot be relatively numerically trivial over $Z$. Assume by contradiction that it is relatively numerically trivial. Then 
$$
\pi_1^*(K_X+aS+B)-\pi_2^*(K_{X^+}+S^++B^+)\ge0
$$
is exceptional and relatively numerically trivial over $X$. Thus it is empty by the negativity lemma. Then $\pi_1^*(K_X+aS+B)\equiv_Z0$, which contradicts the fact that $K_X+aS+B$ is anti-ample over $Z$. 
\end{proof}

Now Theorem \ref{thm: MMP with scaling}, Theorem \ref{thm: Base point free theorem} and Theorem \ref{thm: Cone theorem} hold if we additionally assume that $V$ is a local ring with infinite residue field, by exactly the same proof of \cite[Theorem 9.34 and 9.36]{BMP+20}, \cite[Theorem 9.26]{BMP+20} and \cite[Theorem 9.27]{BMP+20} respectively.

\begin{proof}[Proof of Theorem \ref{thm: existence of flip for klt pairs}]
We can work over a small neighborhood of $f(\Exc(f))$, and the existence of the flip is equivalent to the finite generation of the graded algebra $\bigoplus_{m\ge0} f_*\Oo_X(m(K_X+\Delta))$ over $\Oo_Z$. This property is stable under localization by Lemma \ref{lem: f.g. stable under localization}. Hence we can assume that $V=\spec R$, where $R$ is an excellent DVR.\par

Let $R'$ be the completion of strict Henselization of $R$. Consider the base change $f':X'\to Z'$ of $f:X\to Z$. Since the residue field of $R'$ is now infinite, and the Minimal Model Program holds in this case, we get that $\bigoplus_{m\ge0} f'_*\Oo_{X'}(m(K_{X'}+\Delta'))$ is finitely generated over $\Oo_{Z'}$, where $K_{X'}+\Delta'$ is the pullback of $K_X+\Delta$ on $X'$. Since $Z'\to Z$ is faithfully flat, then $\bigoplus_{m\ge0} f_*\Oo_X(m(K_X+\Delta))$ is also finitely generated over $\Oo_Z$.
\end{proof}

\begin{lem}\label{lem: f.g. stable under localization}
Let $s \in Z$ be a closed point, and $Z_s := \cO_{Z, s}$. Suppose $D \subseteq Z$ is a divisor such that $\bigoplus_{m \ge 0} \cO_{Z_s}(mD_s)$ is finitely generated $\cO_{Z_s}$-algebra, where $D_s$ is the pullback of $D$ to $Z_s$. Then $\bigoplus_{m \ge 0} \cO_Z(mD)$ is a finitely generated $\cO_Z$-algebra in a neighborhood of $Z_s$.
\end{lem}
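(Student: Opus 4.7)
The strategy is a lift-and-spread commutative algebra argument. I first reduce to the affine case: shrink $Z$ to an affine open neighborhood $\Spec A$ of $s$, let $\mathfrak{p} \subseteq A$ be the prime corresponding to $s$, and observe that each $R_m := H^0(Z, \Oo_Z(mD))$ is a finitely generated $A$-module, since $\Oo_Z(mD)$ is a coherent rank-one reflexive sheaf on a Noetherian affine scheme. Flat base change then identifies $R_\mathfrak{p} := \bigoplus_m R_m \otimes_A A_\mathfrak{p}$ with the hypothesized finitely generated graded $A_\mathfrak{p}$-algebra.

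Next I lift generators. Choose homogeneous $A_\mathfrak{p}$-algebra generators $\bar g_1, \ldots, \bar g_n$ of $R_\mathfrak{p}$ with $\deg \bar g_i = m_i$, and clear denominators so that each $\bar g_i$ is the image of a section $g_i \in R_{m_i}$. The graded $A$-subalgebra $B := A[g_1, \ldots, g_n] \subseteq R$ is then finitely generated with $B_\mathfrak{p} = R_\mathfrak{p}$. By Noetherianity of $A_\mathfrak{p}[x_1, \ldots, x_n]$, the kernel of the presenting surjection $A_\mathfrak{p}[x_1, \ldots, x_n] \twoheadrightarrow R_\mathfrak{p}$ sending $x_i \mapsto \bar g_i$ is generated by finitely many homogeneous relations $r_1, \ldots, r_k$. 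Lifting each to $\tilde r_j \in A[x_1, \ldots, x_n]$, the evaluation $\tilde r_j(g_1, \ldots, g_n)$ lies in the finitely generated $A$-module $R_{d_j}$ and vanishes at $\mathfrak{p}$, hence is annihilated by some $u_j \in A \setminus \mathfrak{p}$. Replacing $A$ by $A_f$ with $f := u_1 \cdots u_k$, we obtain a well-defined surjection of graded $A_f$-algebras
\[
\Psi \colon \widetilde R := A_f[x_1, \ldots, x_n]/(\tilde r_1, \ldots, \tilde r_k) \twoheadrightarrow B_f,
\]
which becomes an isomorphism upon further localizing at $\mathfrak{p}$.

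The remaining step, and the main obstacle, is to promote the stalk identity $B_\mathfrak{p} = R_\mathfrak{p}$ to an identity on a Zariski neighborhood of $s$. The quotient $R_f/B_f$ is a graded $A_f$-module with infinitely many nonzero pieces, each a finitely generated $A_f$-module vanishing at $\mathfrak{p}$ but a priori with distinct annihilators across degrees, so the naive degree-by-degree localization does not directly yield a uniform open neighborhood. I would resolve this by viewing $R_f$ as a module over the Noetherian $A_f$-algebra $\widetilde R$: the kernel of $\widetilde R \to R_f$ is a finitely generated ideal of $\widetilde R$ (hence annihilated by a single element of $A \setminus \mathfrak{p}$), and for the cokernel one shows that $R_f$ is finitely generated as a $\widetilde R$-module on a neighborhood of $\mathfrak{p}$, using that $\widetilde R_\mathfrak{p} \cong R_\mathfrak{p}$ is already generated by $1$ over itself. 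Standard closedness of support for finitely generated modules over Noetherian rings then produces the desired $f' \in A \setminus \mathfrak{p}$, and the equality $R_{ff'} = B_{ff'}$ exhibits $\bigoplus_m \Oo_Z(mD)$ as a finitely generated $\Oo_Z$-algebra on the open neighborhood $\Spec A_{ff'} \ni s$.
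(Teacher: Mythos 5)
Your reduction to the affine case and the lifting of generators and relations are fine, but the final step --- promoting the stalk equality $B_{\mathfrak{p}}=R_{\mathfrak{p}}$ to an equality $B_{f'}=R_{f'}$ on a neighborhood --- is a genuine gap, and it is exactly where the content of the lemma lies. Your proposed fix is circular: to invoke closedness of support you need $R_f$ (equivalently $R_f/B_f$) to be a \emph{finitely generated} module over the Noetherian ring $\widetilde R$ (or $B_f$), but that module-finiteness is precisely what is unknown; the fact that $\widetilde R_{\mathfrak{p}}\to R_{\mathfrak{p}}$ is surjective gives no finite set of module generators of $R_f$ over $\widetilde R$, and without finite generation the locus where the cokernel vanishes need not be open. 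Worse, the purely graded-algebra statement you are implicitly using (each graded piece coherent, the stalk algebra at $\mathfrak{p}$ finitely generated, hence finitely generated after inverting one $f\notin\mathfrak{p}$) is false. For instance, take $A=k[t]$ with $k$ infinite, $\mathfrak{p}=(t)$, distinct $c_i\in k^{*}$, and $R=\bigoplus_{m\ge 0} I_m x^m\subseteq k(t)[x]$ with $I_m=\bigl(\prod_{i\le m}(t-c_i)\bigr)^{-m}A$; superadditivity of the divisors $m\sum_{i\le m}[c_i]$ makes $R$ a graded $A$-algebra with each $I_m\cong A$, and $R_{\mathfrak{p}}=A_{\mathfrak{p}}[x]$, yet for every $f\notin\mathfrak{p}$ there are infinitely many $m$ with $f(c_m)\neq 0$, and then $(I_m)_f$ has a pole at $c_m$ invisible to all products of lower-degree pieces, so $R_f$ is not finitely generated. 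So any correct argument must use that the graded pieces are $\cO_Z(mD)$ for a \emph{single} divisor $D$, which your proof never does.

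The paper's proof uses precisely this geometric structure: by \cite[Lemma 6.2]{KM98}, finite generation of $\bigoplus_m \cO_{Z_s}(mD_s)$ is equivalent to the existence of a small projective birational morphism $g_s:Y_s\to Z_s$ with $Y_s$ normal and the transform of $D_s$ relatively ample (the $D$-flip). That morphism, being of finite type, spreads out to a morphism $g_U:Y_U\to U$ over an open neighborhood $U\ni s$ on which smallness, normality and relative ampleness persist after shrinking, and then the same lemma applied in the other direction gives finite generation of $\bigoplus_m\cO_Z(mD)$ over $U$. If you want to salvage your approach, you would need some such geometric input (e.g.\ spreading out $\mathrm{Proj}$ of the local algebra together with the small morphism it defines); the commutative-algebra spreading alone cannot close the argument.
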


\begin{proof}
By \cite[Lemma 6.2]{KM98}, there exists a small projective birational morphism $g_s: Y_s \to Z_s$ such that $Y_s$ is normal and $g_s^* D_s$ is $g_s$-ample. Taking the closure of $g_s$, we get a projective morphism $g: Y \to Z$. Then there is an open subset $U$ contains $Z_s$ such that $g_U$ is small, $Y_U$ is normal and $g_U^* D_{s, U}$ is $g_U$-ample, where $D_{s, U}$ is the restriction of the closure of $D_s$ to $U$. Possibly shrinking $U$ we may assume that $D_{s, U}$ is exactly $D_U$, the restriction of $D$ to $U$. Hence by \cite[Lemma 6.2]{KM98}, $\bigoplus_{m \ge 0} \cO_Z(mD)$ is a finitely generated $\cO_Z$-algebra over $U$.
\end{proof}

Now Theorem \ref{thm: MMP with scaling}, Theorem \ref{thm: Base point free theorem} and Theorem \ref{thm: Cone theorem} follow again from \cite[Theorem 9.34 and 9.36]{BMP+20}, \cite[Theorem 9.26]{BMP+20} and \cite[Theorem 9.27]{BMP+20} respectively.

Finally, Theorem \ref{thm: Termination of flips} follows by the same proof of \cite[Proposition 9.18]{BMP+20} when $K_X+\Delta$ is pseudo-effective, and \cite[Corollary 3.5]{Sti21} when $K_X+\Delta$ is not pseudo-effective.


\begin{thebibliography}{ABCDE}
\bibitem[AHK07]{AHK07} V. Alexeev, C. Hacon, Y. Kawamata, {\it Termination of (many) 4-dimensional log flips}, Invent. math. {\bf 168}, 433--448 (2007).
 
\bibitem[BCHM10]{BCHM10}  C. Birkar, P. Cascini, C. Hacon, and J. McKernan,{\it Existence of minimal models for varieties of log general type,} J. Amer. Math. Soc. {\bf 23} (2010), no. 2, 405--468.
\bibitem[Bir12]{Bir12} C. Birkar, {\it Existence of log canonical flips and a special LMMP,} Pub. Math. IHES., {\bf 115} (2012), 325–368.

\bibitem[Bir16]{Bir16} C. Birkar, {\it Existence of flips and minimal models for 3-folds in char {$p$}},  Ann. Sci. \'Ec. Norm. Sup\'er. (4), {\bf 49}, 1, 169--212.
\bibitem[BMP$^+$20]{BMP+20} Bhargav Bhatt, Linquan Ma, Zsolt Patakfalvi, Karl Schwede, Kevin Tucker, Joe Waldron, and Jakub Witaszek, {\it Globally $+$-regular varieties and the minimal model program for threefolds in mixed characteristic}, arXiv:2012.15801v2.

\bibitem[BW17]{BW17} C. Birkar, J. Waldron, {\it Existence of Mori fibre spaces for 3-folds in char p.} Adv. Math. {\bf 313} (2017), 62--101.
\bibitem[CT20]{CT20} P. Cascini, H. Tanaka, {\it Relative semi-ampleness in positive characteristic}, Proc. Lond. Math. Soc. (3) {\bf 121} (2020), no. 3, 617–655.

\bibitem[CTW17]{CTW17} P. Cascini, H. Tanaka, J. Witaszek, {\it On log del Pezzo surfaces in large characteristic}, Compos. Math. {\bf 153} (2017), no. 4, 820-850.

\bibitem[CTX15]{CTX15} P. Cascini, H. Tanaka, C. Xu, {\it On base point freeness in positive characteristic.} Ann. Sci. \'Ec. Norm. Sup\'er. (4) {\bf 48} (2015), no. 5, 1239--1272.
\bibitem[GNT19]{GNT19} Y. Gongyo, Y. Nakamura, and H. Tanaka, {\it Rational points on log Fano threefolds over a finite field}, J. Eur. Math. Soc. (JEMS) {\bf 21} (2019), no. 12, 3759–3795.
\bibitem[HL21]{HL21} C. D. Hacon, J. Liu, {\it Existence of flips for generalized lc pairs}, arXiv:2105.13590.

\bibitem[HW19a]{HW19a} C. D. Hacon, J. Witaszek, {\it On the relative Minimal Model Program for threefolds in low characteristics}, arXiv:1909.12872.
\bibitem[HW19b]{HW19b} C. D. Hacon, J. Witaszek, {\it   The Minimal Model Program for threefolds in characteristic five}, arXiv:1911.12895.
\bibitem[HW20]{HW20} C. D. Hacon, J. Witaszek, {\it On the relative Minimal Model Program for fourfolds in positive and mixed characteristic}, arXiv:2009.02631v2.
\bibitem[HX13]{HX13} C. D. Hacon and C. Xu, {\it Existence of log canonical closures}, Invent. Math. {\bf 192} (2013), no. 1, 161–195.

\bibitem[HX15]{HX15} C. D. Hacon, C. Xu, {\it  On the three dimensional minimal model program in positive characteristic}, J. Amer. Math. Soc. {\bf 28} (2015), no. 3, 711--744. 
\bibitem[Kaw94]{Kaw94}  Y. Kawamata, {\it Semistable minimal models of threefolds in positive or mixed characteristic}, J. Algebraic Geom. {\bf3} (1994), no. 3, 463–491.
\bibitem[Kee99]{Kee99} S. Keel, {\it Basepoint freeness for nef and big line bundles in positive characteristic}. Ann. of Math. (2) {\bf 149} (1999), no. 1, 253--286.
\bibitem[KM98]{KM98} J. Koll\'ar, S. Mori, {\it  Birational geometry of algebraic varieties}, volume 134 of Cambridge Tracts in Mathematics, Cambridge University Press, Cambridge, 1998, With the collaboration of C. H. Clemens and A. Corti, Translated from the 1998 Japanese original.
\bibitem[Kol13]{Kol13}  J. Koll\'ar, {\it Singularities of the minimal model program}, Cambridge Tracts in Mathematics, vol. 200, Cambridge University Press, Cambridge, 2013.
\bibitem[SS10]{SS10} K. Schwede, K. Smith, {\it Globally F-regular and log Fano varieties}, Adv. Math. {\bf 224} (2010), no. 3, 863--894.

\bibitem[Sti21]{Sti21} L. Stigant, {\it Mori Fibrations in Mixed Characteristic}, arXiv:2110.06067.

\bibitem[Tan18]{Tan18} H. Tanaka, {\it Minimal model program for excellent surfaces}, Ann. Inst. Fourier (Grenoble) {\bf 68} (2018), no. 1, 345–376.

\bibitem[TY20]{TY20} T. Takamatsu and S. Yoshikawa, {\it Minimal model program for semi-stable threefolds in mixed characteristic}, arXiv:2012.07324.

\bibitem[Wit20]{Wit20} J. Witaszek, {\it Keel’s base point free theorem and quotients in mixed characteristic}, arXiv:2002.11915.

\bibitem[Wit21]{Wit21} J. Witaszek, {\it Relative semiampleness in mixed characteristic}, arxiv:2106.06088.

\bibitem[XX21]{XX21} L. Xie, Q. Xue, {\it On the termination of the MMP for semi-stable fourfolds in mixed characteristic}, arxiv:2110.03115.


\end{thebibliography}
\end{document}